\newtheorem{theorem}{Theorem}[subsection]
\theoremstyle{definition}
\newtheorem{definition}[theorem]{Definition}
\newtheorem{cor}[theorem]{Corollary}
\newtheorem{lemma}[theorem]{Lemma}
\newtheorem{proposition}[theorem]{Proposition}
\newtheorem{example}[theorem]{Example}
\newtheorem{remark}[theorem]{Remark}
\title{Simply-laced mixed-sign Coxeter groups, with an associate graph is a line or a simple cycle}
\author{Yiska Efrat Aharoni, Robert Shwartz \\
Department of Mathematics \\
Ariel University.}
\begin{document}

\author{Yiska Efrat Aharoni\\Department of Mathematics\\Ariel University, Israel\\yiskaefr.aharoni@msmail.ariel.ac.il\and Robert Shwartz\\Department of Mathematics\\Ariel University, Israel\\robertsh@ariel.ac.il}
\date{}
\title{Simply-laced mixed-sign Coxeter groups, with an associate graph is a line or a simple cycle}
\maketitle

\begin{abstract}

   In 2011 Eriko Hironaka introduced an interesting generalization of Coxeter groups, motivated by studying certain mapping classes. The generalization is by labeling the vertices of a Coxeter graph either by $+1$ or by $-1$, and then generalizing the standard geometric representation of the associated Coxeter group by concerning the labels of the vertices. The group which Hironaka get by that generalization is called mixed-sign Coxeter group. In this paper we classify the simply-laced mixed-sign Coxeter groups where the associated graph is either a line or a simple cycle. We show that all  the defining relations of the mixed-sign Coxeter groups  with the mentioned associated graph (either a line or a simple cycle) are squares or cubes of a product of conjugates of two generators of the mixed-sign Coxeter group and are strongly connected to the labels of the vertices of the associated graph.
\end{abstract}

\section{Introduction}\label{Introduction}

\subsection{Coxeter groups}

Coxeter Groups are an important class of groups which is widely  used in a lot of fields and a lot of aspects of mathematics, like the study of
symmetries and reflections, classifications of Lie Algebras and in a lot of other subjects. Hence, we start with recalling the definition of Coxeter groups, and some basic related concepts and properties of it, as it is defined in \cite{BOOK}.

\begin{definition}\label{cox-group}
A group $W$ is a Coxter group if $W$ has the following presentation in terms of generators and relations:
$$W=\langle s_1, s_2, \ldots, s_n ~|~ s_i^2=1, ~\left(s_i s_j\right)^{m_{i,~j}}=1, ~1\leq i, j\leq n, ~m_{i,~j}\in \mathbb{N}, ~m_{i,~j}\geq 2\rangle.$$
i.e.,
\begin{enumerate}
    \item $W$ is generated by $s_1, s_2, \ldots, s_n$, where $s_i$ is an involution (element of order $2$) for all \\$1\leq i\leq n$;
    \item All the defining relations of $W$ has the form:
    $$s_i^2=1,\quad  (s_is_j)^{m_{i,~j}}=1,$$ where $i, j\in \{1,2...n\},$ $i\neq j$ and  $m_{i,~j}$ is the order of the element $s_is_j$ in $W$.
\end{enumerate} 
\end{definition}
\begin{definition}\label{cox-matrix}
Let $S$ be a set of generators of a Coxeter group $W$. 
\begin{itemize}
    \item The associated matrix $M: S \times S \rightarrow\{1,2, \ldots, \infty\}$, which is called the Coxeter matrix of the group $W$, satisfies the following properties:
    
    \begin{itemize}
        \item $m_{s, s^{\prime}} =m_{s^{\prime}, s}=|s\cdot s^{\prime}|$ ~(where, $|s\cdot s^{\prime}|$ is the order of the element $s\cdot s^{\prime}$ in the group $W$); 
        \item $m_{s, s^{\prime}} =1 \Leftrightarrow s=s^{\prime}$.
    \end{itemize}

\item The associated Coxeter graph is a graph with vertices in $S$ satisfies the following properties: 
\begin{itemize}
    \item If $m_{s, s^{\prime}}=2$, then the two  vertices $s$ and $s^{\prime}$ are not connected by any edge in the Coxeter graph;
    \item If $m_{s, s^{\prime}}=3$, then there is an unlabeled edge which connects the vertices  $s, s^{\prime}$ which is called a simply-laced edge;
    \item If $m_{s, s^{\prime}}\geq 4$, then there is an  edge which is labeled by $m_{s, s^{\prime}}$ that  connects the vertices  $s, s^{\prime}$;
    \item A Coxeter graph is called simply-laced Coxeter graph if all the edges are simply-laced. Then the associated Coxeter group is called simply-laced Coxeter group (i.e., there is satisfied $m_{s_i, s_j}\in \{2, 3\}$  for all $1\leq i, j\leq n$).
\end{itemize}
\end{itemize}
\end{definition}

\begin{remark}
Let $G$ be an edge labeled graph without loops, then there exists a unique  Coxeter group denoted by $W(G)$ such that $G$ is the Coxeter graph of $W(G)$.
\end{remark}

\begin{remark}
Notice, the following properties:
\begin{itemize}
    \item $m_{s, s^{\prime}}=2$ if and only if $s$ and $s^{\prime}$ are commute;
    \item $m_{s,s^{\prime}}=\infty$ if and only if the element $s\cdot s^{\prime}$ has an infinite order.
\end{itemize}
\end{remark}
 


\begin{definition}\label{cox-W-S}
The pair $(W, S)$ is called a Coxeter system, if $W$ is a Coxeter group and $S$ is  set of Coxeter generators. The cardinality of $S$ is called the rank of $(W, S)$. 
 \end{definition}

\subsection{Standard geometric representation} 

Now, we recall the definition of the standard geometric representation of Coxeter groups, as it is defined in \cite{BOOK}.

\begin{definition}
Reflection systems are defined by a triple $(W, S, B)$, where $|S|=n$, and $B$ is the symmetric bilinear form on $\mathbb{R}^{n}$ with the standard basis $\{e_1, \ldots, e_n\}$, which is uniquely determined by the Coxeter system $(W, S)$ as follow:
\begin{itemize}
    \item $B\left(e_{i}, e_{i}\right) = 2$;
    \item $B\left(e_{i}, e_{j}\right)=-2 \cos \left(\frac{\pi}{m_{i, j}}\right)$, ~where, $m_{i,~j}$ are given by the Coxeter system $(W,S)$.
\end{itemize}

\end{definition}



Now, we recall the definition of degenerate and non-degenerate bilinear forms.

\begin{definition}\label{non-degenerate}
Let $B$ be a bilinear form on $\mathbb{R}^n$ with the standard basis $\{e_1, \ldots, e_n\}$. 
\begin{itemize}
    \item If the corresponding matrix $B$ (where $(B)_{i,~j}:=B(e_i, e_j)$~) is singular, then the bilinear form is called degenerate blinear form;
    \item Otherwise, if the corresponding matrix $B$ is regular (non-singular), then the bilinear form is called non-degenerate blinear form.
\end{itemize}
\end{definition}

\begin{definition}\label{s_i-e_i}
The standard geometric representation is the faithful homomorphism $$\pi: W\rightarrow GL_n(\mathbb{R}),$$
where $GL_n(\mathbb{R})$ is  the group of $n\times n$ invertible matrices over $\mathbb{R}$,  and for every $w\in W$,  ~$\pi_w$ is uniquely determined by the homomorphic images of $\pi_{s_i}$ for $s_i\in S$ ($1\leq i\leq n$),  as follow: For each generator  $s_{i} \in S$, ~$\pi_{s_i}$ is defined to be the following reflection:
$$
\pi_{s_{i}}\left(e_{j}\right)=e_{j}-\frac{2 B\left(e_{i}, e_{j}\right)}{B\left(e_{i}, e_{i}\right)} e_{i}.
$$
\end{definition}

One will note that $\pi_{s_{i}}$ negates the corresponding basis vector $e_{i}$ and fixes the set of vectors which are orthogonal to $e_{i}$ relative to the bilinear form $B$. The reflection group $W$ is generated by the set $S=\left\{s_{1}, s_{2}, \ldots, s_{n}\right\}$ and has an action on $\mathbb{R}^n$ which preserves the bilinear form $B$.

\begin{remark}
Every Coxeter group is uniquely determined by a standard geometric representation.
\end{remark}

\subsection{The background and the motivation of the paper}

There are a several generalizations of the standard geometric representation of Coxeter groups, and for various motivations. The most natural generalization have been done by Cameron, Seidel and Tsaranov, where they introduced in 1994 the idea of generalizing the standard geometric representation of Coxeter groups to representation induced from signed graphs, i.e., the edges are signed by 1 or by -1 \cite{CST}. The group they get from the signed geometric representation is a generalized Coxeter group, which is a certain quotient of the Coxeter group which we get from the standard geometric representation of the corresponding non-signed graph, where the non-Coxeter relations in the generalized Coxeter group are induced from cycles, with odd number of negative signs.  There is a series of five papers \cite{rtv, ast1, alv, ast2, line-graph}, which are classifying generalized Coxeter and generalized Artin groups, where looking at the dual graph (a graph which we get by exchanging the vertices by the edges of a given graph), and at the dual signed-graph to a Coxeter line-graph (a graph where there is a possibility to exchange the vertices by the edges), with the Coxeter generators are labeled by the edges, motivated by classifying fundamental groups of branch curves of algebraic surfaces. The idea of \cite{CST} has been generalized to weighted graphs, where it differs from signed graph by the allowance of labeling the edges by any number (not necessarily 1 or -1) \cite{NOT}. The corresponding weighted Coxeter group is a quotient of a Coxeter group where the non-Coxeter relations are induced from cycles with product of the weights along the cycle equals to a root of unity (i.e., an element of finite order in the multiplicative group $\mathbb{C}^{*}$).  
\\

In the paper we deal with a different generalization of Coxeter graphs, namely, simply-laced mixed-sign Coxeter graphs, i.e., the vertices of the graph are signed  by $1$ or by $-1$. There is an associated representation, which is a generalization of the standard geometric representation of Coxeter groups, which was introduced by Hironaka in 2011 \cite{HIR11}. The group which we get by the generalization of Hironaka is called mixed-sign Coxeter group.  The motivation of Hironaka to define mixed-sign Coxeter graphs and mixed-sign Coxeter groups comes from studying the construction of Pseudo-Anosov mapping classes from generalized Coxeter graphs (see \cite{HIR11}). In 2012, Armstrong has  showed in his Ph. D. thesis \cite{Japan Thesis},  that every mixed-sign Coxeter group is a quotient of a certain Coxeter group, whose graph depends on the signs of the vertices of the corresponding  mixed-sign Coxeter graph, without explaining the relations.
In the paper we study the structure of mixed-sign Coxeter group, where we give description of it in terms of generators and relations 
. We classify simply-laced mixed-sign Coxeter groups in terms of generators and relations, which Coxeter graph is either a line or a simple cycle.
The quotients of Coxeter groups, with the relations induced by a mixed-sign Coxeter graph which is a simply-laced simple cycles have importance in other aspects of mathematics as well, like classifying seeds in a Cluster Algebra \cite{BM}.  The idea of labeling edges or vertices of a given graph by a given group elements, and considering relations concerning the cycles of the given graph has been used in completely different terms as well. For example, the idea has been used in  the aspect of small cancellation theory to prove where the Freiheitsatz holds for one-relator free products \cite{S1, S2, HS}. Other direction is  enumerations of the possibly labeling of a given undirected or directed graph by elements of a given group, where the relations of the given group are connected to the labels on the cycle of the graph \cite{CGLS}.

\subsection{Mixed-sign geometric representation}\label{mix-sign-g-r}

We start with recalling the definition of mixed-sign Coxeter groups and mixed-sign geometric representation as it is defined by Hironaka and Armstrong in \cite{Japan Thesis, HIR11}. Then we give some important propositions concerning properties  of the mixed-sign geometric representation which we use in the proofs of the main theorems.

\begin{definition}\label{mixed-geo-rep}
A mixed-sign Coxeter graph $\Gamma$  is defined as an undirected, vertex-labeled graph with $n$ vertices denoted by $s_i$ for $1\leq i\leq n$, and signs of $"+1"$ or $"-1"$ on the vertices  of $\Gamma$. The sign of the vertex $s_i$ is denoted by $f_i$. As we have already seen, we use the notation $m_{i, j}$ for the edge weight between vertices $s_{i}$ and $s_{j}$, where: 
\begin{itemize}
    \item $m_{i, j}=2$ if $s_{i}$ and $s_{j}$ are not connected by an edge;
    \item $m_{i,~j}=3$ if $s_{i}$ and $s_{j}$ are connected by an unlabeled edge (which is called simply-laced edge);
    \item $m_{i,~j}\geq 4$ if $s_{i}$ $s_{j}$ are connected by an edge labeled by $m_{i,~j}$ (which is called non-simply-laced edge)
\end{itemize}

Define a symmetric bilinear form on $\mathbb{R}^{n}$ asociated to the mixed-sign Coxeter graph $\Gamma$, with $n$ vertices $s_1, s_2, \ldots, s_n$ as follow: \\
$$
B\left(e_{i}, e_{j}\right)=\left\{\begin{array}{cc}
2 & \text { if } i=j \text { and vertex } s_{i} \text { has label }+1 \\ \\
-2 & \text { if } i=j \text { and vertex } s_{i} \text { has label }-1 \\ \\
-2 \cos \left(\frac{\pi}{m_{i, j}}\right) & \text { if } i \neq j \\ \\
-2 & \text { if } m_{i, j}=\infty
\end{array}\right.
$$ \\
where for $1\leq i\leq n$,  ~$e_{i}$ is the $i$-th standard basis vector for $\mathbb{R}^{n}$. 

Let $S=\{s_1, s_2, \ldots s_n\}$. Consider the following map $$\pi: S\rightarrow GL_{n}(\mathbb{R})$$ where, 
$$\pi_{s_{i}}\left(e_{j}\right)=e_{j}-\frac{2 B\left(e_{i}, e_{j}\right)}{B\left(e_{i}, e_{i}\right)} e_{i}.$$

Now, we define a group $W(\Gamma)$, which is generated by $S=\{s_1, s_2, \ldots, s_n\}$, and extends the map $\pi: S\rightarrow GL_{n}(R)$ to a group homomorphism $\pi: W(\Gamma)\rightarrow GL_n(R)$. The representation $$\pi: W(\Gamma)\rightarrow GL_n(R)$$ is called the mixed-sign geometric representation of $W(\Gamma)$.

By the definition of $\pi_{s_i}$ for $1\leq i\leq n$, it is easy to see that $s_i$ is an involution in $W(\Gamma)$.
The group $W(\Gamma)$,  which is generated by the involutions $s_i$ for $1\leq i\leq n$, is called mixed-sign Coxeter group, and the pair $(W(\Gamma),S)$, where $W(\Gamma)$ is the mixed-sign Coxeter group associated to the mixed-sign Coxeter graph $\Gamma$ and  which is generated by the set of involutions $S=\{s_1, s_2, \ldots, s_n\}$ is called mixed-sign Coxeter system. 
Notice, in particular, every classical Coxeter system is a mixed-sign Coxeter system.\\
\end{definition}

\begin{remark}
 In our paper we consider simply-laced mixed-sign Coxeter groups only. Hence, for a mixed-sign graph $\Gamma$, the the associated mixed-sign geometric representation has the following form:
 
  $$\pi: S\rightarrow GL_{n}(\mathbb{R})$$ where, 
$$\pi_{s_{i}}\left(e_{j}\right)=\left\{\begin{array}{ll} 
-e_j & \quad{\text{if} ~~i=j} \\ \\
 e_j  & \quad{\text{if} ~~m_{i,~j}=2} \\ \\
e_{j}+f_{i}e_{i}
      & \quad{\text{if} ~~m_{i,~j}=3}
  \end{array}\right.$$
\end{remark}

\subsection{Basic properties of mixed-sign Coxeter groups}

In this subsection we give some basic properties of mixed-sign Coxeter groups, concerning the associated mixed-sign Coxeter graph,  which generalizes  properties of Coxeter groups.

\begin{definition}
Let $W$ be a group generated by $s_1, s_2, \ldots, s_n$ such that $s_i^2=1$. 
\begin{itemize}
    \item Then a relation of a form $\left(s_i\hat{s}_j\right)^m=1$, where $\hat{s}_j$ is an arbitrary conjugate of $s_j$, is called a generalized Coxeter relation;
    \item If all the relations in $W$ are generalized Coxeter relations, then $W$ is called generalized Coxeter group;
    \item For $1\leq k\leq n$, let  $\hat{s}_k$, $\check{s}_k$ be two  arbitrary conjugates of $s_k$ , then for every $1\leq i,~j\leq n$, any relation of a form $(\hat{s}_i\check{s}_j)^m=1$ can be rewritten as a  relation of a form $({s}_i\breve{s}_j)^m=1,$ where $\breve{s}_j$ is a specific conjugate of $s_j$.
\end{itemize}
\end{definition}




Now, we mention some basic properties of mixed-sign Coxeter groups, which is widely used in the paper.

\begin{proposition}\label{2-+}
Consider a mixed-sign Coxeter group generated by $s_1, s_2, \ldots, s_n$, where the corresponding vertices in the associated mixed-sign Coxeter graph are signed by $f_1, f_2, \ldots f_n$ respectively, and for $1\leq i,~j\leq n$, let $m_{i,~j}$ as it is defined in Definition \ref{mixed-geo-rep}.
Then by \cite{Japan Thesis},  the order of the element $s_i s_j$, is as follow:
\begin{itemize}
    \item In case $s_i$ and $s_j$ are connected by an edge (i.e., $m_{i,~j}\geq 3$):
    \begin{itemize}
        \item If $f_i=f_j$ then $|s_i s_j|=m_{i,~j}$;
        \item If $f_i\neq f_j$ then $|s_i s_j|=\infty$.
    \end{itemize}
    \item In case $s_i$ and $s_j$ are not connected by any edge (i.e., $m_{i,~j}=2$), $|s_i s_j|=2$ without any dependence on the sign of the vertices $s_i$ and $s_j$.
\end{itemize}

\end{proposition}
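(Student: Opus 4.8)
The plan is to work directly with the mixed-sign geometric representation $\pi$ restricted to the two-generator subgroup $\langle s_i, s_j\rangle$, since the relevant matrices act nontrivially only on the subspace $V_{ij}=\operatorname{span}(e_i,e_j)$ (every other basis vector $e_k$ with $m_{i,k}=m_{j,k}=2$ is fixed by both $\pi_{s_i}$ and $\pi_{s_j}$), and the order of $s_is_j$ in $W(\Gamma)$ equals the order of $\pi_{s_i}\pi_{s_j}$ because $\pi$ is faithful. So the whole problem reduces to computing the order of a $2\times 2$ matrix. First I would dispatch the case $m_{i,j}=2$: then $\pi_{s_i}$ and $\pi_{s_j}$ commute (each being a reflection along its own axis, fixing the other), each has order $2$, and the product $\pi_{s_i}\pi_{s_j}$ has order $2$ regardless of $f_i,f_j$, which is immediate from the explicit formulas $\pi_{s_i}(e_j)=e_j$, $\pi_{s_j}(e_i)=e_i$ in the simply-laced remark (the argument is identical in the non-simply-laced case since $B(e_i,e_j)=0$).

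For the case $m_{i,j}=3$ (or more generally $m_{i,j}\geq 3$), I would write down the two matrices on the ordered basis $(e_i,e_j)$ of $V_{ij}$. Using the simply-laced formulas, $\pi_{s_i}$ sends $e_i\mapsto -e_i$ and $e_j\mapsto e_j+f_ie_i$, so on $V_{ij}$ it is $\begin{pmatrix}-1 & f_i\\ 0 & 1\end{pmatrix}$, and similarly $\pi_{s_j}$ is $\begin{pmatrix}1 & 0\\ f_j & -1\end{pmatrix}$. Their product $P=\pi_{s_i}\pi_{s_j}$ is then $\begin{pmatrix}-1+f_if_j & -f_i\\ f_j & -1\end{pmatrix}$, whose determinant is $1$ and whose trace is $f_if_j - 2$. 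When $f_i=f_j$ this trace equals $-1$, so $P$ has characteristic polynomial $\lambda^2+\lambda+1$, i.e.\ eigenvalues the primitive cube roots of unity, hence $P$ has order $3=m_{i,j}$. When $f_i\neq f_j$ the trace equals $-3$, so $P$ has a repeated eigenvalue $-1$ but $P\neq -I$ (the off-diagonal entries $-f_i,f_j$ are nonzero), so $P$ is a nontrivial unipotent times $-I$ and has infinite order. For a general labeled edge $m_{i,j}=m\geq 3$ one replaces $f_i,f_j$ by $\pm 1$ times $2\cos(\pi/m)$ in the appropriate entries and gets $\operatorname{tr}P = f_if_j\cdot 4\cos^2(\pi/m) - 2$, which equals $2\cos(2\pi/m)$ when $f_i=f_j$ (giving order exactly $m$) and equals $-4\cos^2(\pi/m)-2 < -2$ when $f_i\neq f_j$ (giving $|\operatorname{tr}P|>2$, hence a hyperbolic element of infinite order). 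For completeness I would also note that the claim is attributed to Armstrong's thesis \cite{Japan Thesis}, so an alternative would simply be to cite that computation, but the self-contained eigenvalue argument is short enough to include.

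The only mild subtlety — and the step I would present most carefully — is justifying the reduction to the two-dimensional block: one must check that $V_{ij}$ is invariant under both $\pi_{s_i}$ and $\pi_{s_j}$ and that the complementary coordinates are genuinely fixed, so that the order of $\pi_{s_i}\pi_{s_j}$ as a matrix in $GL_n(\mathbb{R})$ is the same as the order of its restriction $P$ to $V_{ij}$; this follows at once from the case analysis in the formula for $\pi_{s_i}(e_k)$, since $m_{i,k}=2$ forces $\pi_{s_i}(e_k)=e_k$. There is no real obstacle beyond bookkeeping of signs; the trace/eigenvalue dichotomy ($|\operatorname{tr}|<2$ versus $|\operatorname{tr}|=2$ versus $|\operatorname{tr}|>2$ for a determinant-one $2\times 2$ matrix) does all the work.
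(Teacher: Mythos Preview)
The paper does not actually supply a proof of this proposition; it simply attributes the result to Armstrong's thesis \cite{Japan Thesis}. So there is no in-paper argument to compare against, and your self-contained $2\times 2$ eigenvalue computation is a perfectly reasonable way to fill the gap. The overall strategy---restrict to $V_{ij}=\operatorname{span}(e_i,e_j)$ and read off the order from the trace of a determinant-one matrix---is correct and standard.

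Two points need repair, however. First, a computational slip: when $m_{i,j}=3$ and $f_i\neq f_j$ your matrix $P$ has trace $-3$ and determinant $1$, so its characteristic polynomial is $\lambda^2+3\lambda+1$, with distinct real eigenvalues $\tfrac{-3\pm\sqrt 5}{2}$, \emph{not} a repeated eigenvalue $-1$. The conclusion (infinite order) survives, and indeed your own later remark that $|\operatorname{tr}P|>2$ forces a hyperbolic element is the right reason; just delete the repeated-eigenvalue sentence.

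Second, and more substantively, your reduction to $V_{ij}$ is not justified as stated. You assert that ``the complementary coordinates are genuinely fixed'' because $m_{i,k}=2$ forces $\pi_{s_i}(e_k)=e_k$, but the proposition does not assume $m_{i,k}=m_{j,k}=2$ for all $k\notin\{i,j\}$; in a general graph $s_i$ may well be joined to other vertices. What is true is that $\pi_{s_i}-I$ and $\pi_{s_j}-I$ have image contained in $\operatorname{span}(e_i)$ and $\operatorname{span}(e_j)$ respectively (this is exactly the paper's observation about $\tau_i$), so $P=\pi_{s_i}\pi_{s_j}$ is block upper-triangular with your $2\times 2$ block $A$ on $V_{ij}$ and the identity on the quotient $\mathbb{R}^n/V_{ij}$. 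For the infinite-order cases this already suffices, since the restriction to the invariant subspace $V_{ij}$ has infinite order. For the finite-order cases you need one more line: when $A^m=I$ and $1$ is not an eigenvalue of $A$ (true here: the eigenvalues are $-1,-1$ for $m_{i,j}=2$ and primitive $m$-th roots of unity for $m_{i,j}=m\geq 3$ with $f_i=f_j$), the identity $\sum_{k=0}^{m-1}A^k=0$ kills the off-diagonal block of $P^m$, giving $P^m=I$ in $GL_n(\mathbb{R})$. With these two fixes your argument is complete.
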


\begin{example}
 consider the following mixed-sign Coxeter graph:

\begin{center}
	\includegraphics[width=.40\textwidth]{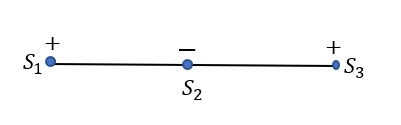}
	\end{center}

By considering the relations in the graph:  $|s_1s_2|=\infty$ and $|s_2s_3|=\infty$.  We will show by Theorem \ref{line-3-relation},
there is one more relation: $(s_1[s_2s_3s_2])^3=1$. Therefore, the mixed-sign Coxeter group, is not a Coxeter group, just  a quotient of a Coxeter group. 
The presentation of the mixed-sign Coxeter group in terms of generators and relations is as follow:
\begin{align} \label{0}
\langle s_{1}, s_{2},s_{3} |s_{1}^{2}=s_{2}^{2}=s_{3}^{2}=1,\left(s_{1} s_{2} s_{3} s_{2}\right)^{3}=1  \rangle.
\end{align} \
\end{example}

\begin{remark}\label{generalized-mixed-sign-coxeter}
Let $\Gamma$ be a mixed-sign Coxeter graph (i.e., every vertex of $\Gamma$ is labeled either by +1 or by -1). 
\begin{itemize}
    \item Then the group $W(\Gamma)$ is not necessarily a Coxeter group. 
    \item There exists non-isomorphic mixed-sign Coxeter graphs $\Gamma$ and $\Gamma^{\prime}$ such that $W(\Gamma)\approx W(\Gamma^{\prime})$.
\end{itemize}

\end{remark}

\begin{definition} \label{bipartite}
A graph $G$ is called  bipartite if the vertices can be divided into two disjoint
sets $N_1$ and $N_2$ such that every edge in $G$ connects a vertex in $N_1$ to a vertex in $N_2$.
\end{definition}

\begin{remark}
Definition \ref{bipartite} implies that any graph that is a line or a simple cycle  of an even length (i.e., simple cycle with $2n$ vertices) is a bipartite graph.
\end{remark}

\begin{definition}\label{g=g_-}
Let $\Gamma$ be a mixed-sign Coxeter graph. Then  $-\Gamma$ is the mixed-sign Coxeter  graph with the same vertices and edges as of $\Gamma$, where every vertex in $-\Gamma$ is labeled  differently to the label of the same vertex in $\Gamma$.
\end{definition}
\begin{remark}\label{MC+=MC-}
Let $\Gamma$ be a mixed-sign Coxeter bipartite graph. Then 
by \cite{Japan Thesis}, $W(\Gamma) \approx W(-\Gamma)$. 
\end{remark}

\subsection{Work plan}

Our work plan in this study is as follow:
\medskip

In Section \ref{Introduction}  we introduce the basic concepts and definitions concerning mixed-sign Coxeter groups. In Section \ref{Pre} we give some important lemmas and propositions which we use in the proofs of the main theorems. Then  in Section \ref{simply-laced-line-cycle} we  study mixed-sign Coxeter groups such that the associated graph is either a simply-laced line or a simply-laced simple cycle.   Finally, in Section \ref{future plans} we  present conclusions and ideas for future plans for further research.\\

\subsection{The main results of the paper}

\begin{itemize}
    \item \textbf{Theorem \ref{line-3-relation}:}
    Let ~$\Gamma$ be a simply-laced mixed-sign Coxeter graph which is a line with $n$ vertices $s_1, s_2, \ldots, s_n$ which are signed by $f_1, f_2, \ldots, f_n$ respectively, such that for $1\leq i\leq n-1$ the vertex $s_i$ is connected to $s_{i+1}$ by a simply-laced edge, as it is described in Fig 1. Then, for $1\leq i<j\leq n$, apart from the Coxeter relations:

\begin{align}
 (s_i\cdot s_j)^2=1 \ \Leftrightarrow j-i\geq 2.   
\end{align}

\begin{align}
(s_i\cdot s_{i+1})^3=1 \ \Leftrightarrow f_i=f_{i+1}.    
\end{align}

which hold by Proposition \ref{2-+}, the following generalized Coxeter relations hold in $W(\Gamma)$:
\begin{align} 
(s_i\cdot s_{i+1, j, i+1})^3=1\quad \text{if and only if}\quad f_i=f_j \end{align}
    
    \item \textbf{Theorem \ref{mixed-sign-s-laced-cycle}:}
    Let $\Gamma$ be a simply-laced simple cycle with vertices $s_1, s_2, \ldots, s_n$  such that the vertex $s_i$ is signed by $f_i$ for  $1\leq i\leq n$, and the vertices $s_i$ and $s_{i+1}$ are connected by a simply-laced edge for $1\leq i\leq n$, where by Remark \ref{n+1=1}, we consider $n+1$ to be $1$ and $1-1$ to be $n$. Then for $1\leq i,~j\leq n$, apart from the Coxeter relations:

\begin{align}
 (s_i\cdot s_j)^2=1 \ \Leftrightarrow 2\leq |i-j|\leq n-2.   
\end{align}

\begin{align}
(s_i\cdot s_{i+1})^3=1 \ \Leftrightarrow f_i=f_{i+1}.    
\end{align}

which hold by Proposition \ref{2-+}, the following generalized Coxeter relations hold in $W(\Gamma)$:

\begin{align} 
(s_{i}\cdot s_{i+1,~j,~i+1})^3=1 \ \Leftrightarrow \ j\neq i-1\quad \text{and}\quad f_i=f_j 
\end{align}

\begin{align}
(s_{j+1, i, j+1}\cdot s_{i+1, j, i+1})^2=1 \ \Leftrightarrow \ \frac{\prod_{k=1}^n f_k}{f_i\cdot f_j} =-1
.\end{align}
     
\end{itemize}

\section{Preliminary results}\label{Pre}

In this section we give some important properties of the mixed-sign geometric representation.

\subsection{Definitions and notations}

Now, we define some definitions and notations concerning the mixed-sign geometric representation.

\begin{definition}\label{E-i-j}
For every $1\leq i,~j\leq n$ such that $i\neq j$ let define the matrix $E_{i,~j}$ of $M_n(\mathbb{R})$ which exchanges the $i$-th row of the identity matrix $I_n$ by the $j$-th row. The entries of $E_{i,~j}$ are as follow:
\begin{itemize}
\item $(E_{i,~j})_{i,~i}=(E_{i,~j})_{j,~j}=0$;
\item $(E_{i,~j})_{i,~j}=(E_{i,~j})_{j,~i}=1$;
\item $(E_{i,~j})_{k,~k}=1$, ~for $1\leq k\leq n$ such that $k\neq i$ and $k\neq j$;
\item $(E_{i,~j})_{k,~\ell}=0$, ~for $k\neq \ell$ ~ and $(k,~\ell)\notin \{(i,~j), ~(j,~i)\}$.
\end{itemize}
\end{definition}

\begin{remark}\label{E-i-j-exchange}
Let $E_{i,~j}$ be a matrix of $M_n(\mathbb{R})$ as it is defined in Definition \ref{E-i-j}, then for every matrix $A\in M_n(\mathbb{R})$, the matrix which we get by multiplying $A$ by $E_{i,~j}$ at the left (i.e., $E_{i,~j}\cdot A$) is the matrix which exchanges the $i$-th row of $A$ by the $j$-th row.
\end{remark}

\begin{definition}\label{tau}
Consider the mixed-sign geometric representation of a simply-laced mixed-sign Coxeter group $W$, as it is defined in Definition \ref{mixed-geo-rep}. Then, for $1\leq i\leq n$,  
we denote by $\tau_i$ the matrix in $M_n(\mathbb{R})$ (the $n\times n$ matrices over $\mathbb{R}$), which satisfies
$$\tau_{i}\left(e_{j}\right)=-\frac{2 B\left(e_{i}, e_{j}\right)}{B\left(e_{i}, e_{i}\right)} e_{i} =\left\{\begin{array}{ll}  
   -2e_i & {\text{if} ~~i=j} \\ \\
   0  & {\text{if} ~~m_{i,~j}=2} \\ \\
   f_{i}e_{i}
     & {\text{if} ~~m_{i,~j}=3}
\end{array}\right.$$
It is easy to see that 
$$\tau_i=\pi_{s_i}-1\quad \text{where} ~1~ \text{is the identity matrix of} ~M_n(\mathbb{R}).$$
Notice, for $n>1$,  ~$\tau_i$ is not an invertable matrix in $M_n(\mathbb{R})$ since all the rows apart from the $i$-th row of $\tau_i$ are $0$. Therefore, $\tau_i$ is not a representative image of any element of $W$ by the mixed-sign geometric representation. Although $\tau_i$ is widely used in the proofs of the theorems concerning the structure of mixed-sign Coxeter groups. Hence, we present now some important properties of $\tau_i$.
\end{definition}

\subsection{Important properties of $\tau_i$}

This section is about some important properties of products of $\tau_i$, which are used at the proofs of the main results
 of the paper. 
 \\

\begin{proposition}\label{tau-property} For $1\leq i\leq n$, let $\tau_i$ be the element of $M_n(\mathbb{R})$ as it is defined in Definition \ref{tau}, then $\tau_i$ satisfies the following properties:
\begin{enumerate}[label=(\roman*)]
    \item $\tau_i^2=-2\tau_i$
    \item $(1+\tau_i)\tau_i=\tau_i(1+\tau_i)=-\tau_i$
    \item Let $E_{i,~j}$ be a matrix of $M_n(\mathbb{R})$ as it is defined in Definition \ref{E-i-j}. If $m_{i,~j}=3$, then  the following holds:
   \begin{itemize}
       \item  $\tau_i\tau_j=E_{i,~j} \cdot f_i \cdot \tau_{j}$
        \item  $\tau_i\tau_j \tau_i=f_i\cdot f_j\cdot\tau_i$
        \item $\left(\tau_i\tau_j\right)^k=\left(f_i\cdot f_j\right)^{k-1}\cdot \tau_i \tau_j = E_{i,~j} \cdot\left(f_i\right)^k\cdot \left(f_j\right)^{k-1}\cdot \tau_{j}$
        \item $\left(\tau_i\tau_j\right)^k\tau_i=\left(f_i\cdot f_j\right)^{k}\cdot \tau_i$
  \end{itemize}
  \item For $i\neq j$:
  \\
  
  $m_{i,~j}=2\Leftrightarrow (s_is_j)^2=1 \ \Leftrightarrow \ s_is_j=s_js_i \ \Leftrightarrow \  \tau_i\tau_j=\tau_j\tau_i=0$.

\end{enumerate}
\end{proposition}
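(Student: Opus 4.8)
The plan is to carry out everything at the level of the explicit matrices $\tau_i$. By Definition~\ref{tau}, $\tau_i$ has all rows equal to zero except the $i$-th, whose entries are $(\tau_i)_{i,i}=-2$, $(\tau_i)_{i,k}=f_i$ when $m_{i,k}=3$, and $(\tau_i)_{i,k}=0$ when $m_{i,k}=2$; equivalently, $\tau_i$ maps every basis vector into $\mathbb{R}e_i$ and $\tau_i(e_i)=-2e_i$ irrespective of the sign $f_i$, since the factor $B(e_i,e_i)$ cancels. With this description, (i) is immediate: for any $j$ one has $\tau_i^2(e_j)=-\tfrac{2B(e_i,e_j)}{B(e_i,e_i)}\,\tau_i(e_i)=-\tfrac{2B(e_i,e_j)}{B(e_i,e_i)}(-2e_i)=-2\,\tau_i(e_j)$, so $\tau_i^2=-2\tau_i$. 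Part (ii) then follows by distributing and invoking (i): $(1+\tau_i)\tau_i=\tau_i+\tau_i^2=\tau_i-2\tau_i=-\tau_i$, and identically on the other side.

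For (iii), assume $m_{i,j}=3$. Since only the $i$-th row of $\tau_i$ and only the $j$-th row of $\tau_j$ are nonzero, the product $\tau_i\tau_j$ has only its $i$-th row nonzero, and computing that row by the row-times-column rule gives $(\tau_i)_{i,j}$ times the $j$-th row of $\tau_j$, i.e.\ $f_i$ times the $j$-th row of $\tau_j$ (here we use $m_{i,j}=3$, so $(\tau_i)_{i,j}=f_i$). By Remark~\ref{E-i-j-exchange}, a matrix whose $i$-th row equals the $j$-th row of $\tau_j$ and whose other rows vanish is precisely $E_{i,j}\tau_j$, whence $\tau_i\tau_j=E_{i,j}\cdot f_i\cdot\tau_j$. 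Exchanging the roles of $i$ and $j$ — legitimate because $m_{j,i}=m_{i,j}=3$ and $E_{j,i}=E_{i,j}$ by Definition~\ref{E-i-j} — gives $\tau_j\tau_i=E_{i,j}f_j\tau_i$, and therefore $\tau_i\tau_j\tau_i=(E_{i,j}f_i\tau_j)\tau_i=E_{i,j}f_i\bigl(E_{i,j}f_j\tau_i\bigr)=f_if_j\,E_{i,j}^{2}\,\tau_i=f_if_j\,\tau_i$, using that scalars commute with $E_{i,j}$ and $E_{i,j}^{2}=I_n$. The remaining two identities follow by induction on $k$ from the two just established: multiplying $(\tau_i\tau_j)^k=(f_if_j)^{k-1}\tau_i\tau_j$ on the right by $\tau_i\tau_j$ and inserting $\tau_i\tau_j\tau_i=f_if_j\tau_i$ advances the exponent; substituting $\tau_i\tau_j=E_{i,j}f_i\tau_j$ puts it in $E_{i,j}$-form; and appending one further $\tau_i$ and again using $\tau_i\tau_j\tau_i=f_if_j\tau_i$ yields $(\tau_i\tau_j)^k\tau_i=(f_if_j)^k\tau_i$.

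For (iv) I would close a cycle of implications among the four conditions. First, if $m_{i,j}=2$ then $(\tau_i)_{i,j}=0$, so the row computation of part (iii) forces the only possibly nonzero row (the $i$-th) of $\tau_i\tau_j$ to vanish, and symmetrically $\tau_j\tau_i=0$. Next, writing $\pi_{s_i}=1+\tau_i$, one has $\pi_{s_i}\pi_{s_j}=1+\tau_i+\tau_j+\tau_i\tau_j$ and $\pi_{s_j}\pi_{s_i}=1+\tau_i+\tau_j+\tau_j\tau_i$, so $\tau_i\tau_j=\tau_j\tau_i=0$ gives $\pi_{s_i}\pi_{s_j}=\pi_{s_j}\pi_{s_i}$, hence $s_is_j=s_js_i$ in $W(\Gamma)$ since the mixed-sign geometric representation is faithful. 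Then $s_is_j=s_js_i$ yields $(s_is_j)^2=s_i(s_js_i)s_j=s_i(s_is_j)s_j=s_i^2s_j^2=1$ because $s_i,s_j$ are involutions. Finally, $(s_is_j)^2=1$ forces $m_{i,j}=2$: by Proposition~\ref{2-+}, if $s_i$ and $s_j$ were joined by an edge then $|s_is_j|$ would be $3$ or $\infty$, neither of which divides $2$, so in the simply-laced case $m_{i,j}\in\{2,3\}$ we must have $m_{i,j}=2$. This completes the cycle, so all four conditions are equivalent.

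All the computations here are elementary linear algebra, and I do not anticipate a genuine obstacle. The one place that demands care is part (iii), where the $E_{i,j}$ prefactors must be tracked faithfully through the induction and the identity $E_{i,j}^{2}=I_n$ invoked at the right moments; reducing everything, before inducting, to the two clean base identities $\tau_i\tau_j=E_{i,j}f_i\tau_j$ and $\tau_i\tau_j\tau_i=f_if_j\tau_i$ is what keeps the argument transparent.
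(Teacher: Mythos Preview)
Your argument is correct. The paper states this proposition without proof, treating the identities as routine consequences of Definition~\ref{tau}; your explicit row-by-row computation, the clean reduction of part~(iii) to the two base identities $\tau_i\tau_j=E_{i,j}f_i\tau_j$ and $\tau_i\tau_j\tau_i=f_if_j\tau_i$, and the cycle of implications for part~(iv) all go through as written and supply exactly the elementary verification the paper leaves implicit.
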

\medskip





\bigskip

\begin{proposition}\label{tau-i1-ik}
 Consider a simply-laced mixed-sign Coxeter group $W$, with generators $s_1, s_2, \ldots, s_n$. For $1\leq i,~j\leq n$, let $\tau_i$ and  ~$E_{i,~j}$ be as they  are defined in Definitions \ref{tau} and  \ref{E-i-j} respectively. Consider the product $ \tau_{i_1}\cdot \tau_{i_2}\cdots \tau_{i_k}$, where denote by $\rho$ the number of $1\leq t\leq k-1$ such that $\tau_{i_t}=\tau_{i_{t+1}}$. Then by Proposition \ref{tau-property}, for every $1\leq i_1, \ldots, i_k\leq n$, the following holds:
 
    $$ \tau_{i_1}\cdot \tau_{i_2}\cdots \tau_{i_k} = \left\{\begin{array}{ll}(-2)^{\rho}\cdot \prod_{t=1 ~| ~\tau_{i_{t}}\neq \tau_{i_{t+1}}}^{k-1} f_{i_t}\cdot E_{i_1, i_k}\cdot \tau_{i_k} & \quad\quad \text{If} ~~ m_{i_{t}, i_{t+1}}=3~~~\text{for all}~~1\leq t\leq k-1 \\ \\ 0 & \quad\quad \text{If} ~~ m_{i_{t}, i_{t+1}}=2~~~\text{for some}~~1\leq t\leq k-1 \end{array}\right.$$

\end{proposition}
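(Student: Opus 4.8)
The plan is to prove Proposition \ref{tau-i1-ik} by induction on $k$, using the basic identities for $\tau_i$ collected in Proposition \ref{tau-property} as the engine of the induction step. The base case $k=1$ is trivial: the empty product of $f$'s is $1$, $\rho = 0$, and $E_{i_1,i_1}$ should be read as the identity matrix, so the claimed formula reads $\tau_{i_1} = \tau_{i_1}$. For $k=2$ the two cases are exactly the statements in Proposition \ref{tau-property}: if $m_{i_1,i_2}=3$ then $\tau_{i_1}\tau_{i_2} = E_{i_1,i_2}\cdot f_{i_1}\cdot\tau_{i_2}$ (and if moreover $i_1 = i_2$ then $\rho = 1$ and $\tau_i^2 = -2\tau_i$, consistent with the formula since $E_{i,i}$ is the identity), while if $m_{i_1,i_2}=2$ then $\tau_{i_1}\tau_{i_2}=0$ by part (iv).

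For the induction step I would assume the formula holds for the product $P := \tau_{i_2}\cdots\tau_{i_k}$ of length $k-1$, with its own parameter $\rho'$ counting coincidences among indices $i_2,\dots,i_k$, and then left-multiply by $\tau_{i_1}$. First dispose of the degenerate branch: if $m_{i_t,i_{t+1}}=2$ for some $t$ in the range $1\le t\le k-1$, then either that $t$ is at least $2$, so $P = 0$ by induction and hence $\tau_{i_1}P = 0$; or $t = 1$, so $\tau_{i_1}\tau_{i_2}=0$ by Proposition \ref{tau-property}(iv) and again the whole product vanishes (associativity). So assume $m_{i_t,i_{t+1}}=3$ throughout. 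By the induction hypothesis $P = (-2)^{\rho'}\cdot\big(\prod f_{i_t}\big)\cdot E_{i_2,i_k}\cdot\tau_{i_k}$ where the product runs over $2\le t\le k-1$ with $\tau_{i_t}\neq\tau_{i_{t+1}}$. Now I need to compute $\tau_{i_1}\cdot E_{i_2,i_k}\cdot\tau_{i_k}$. The point is that $E_{i_2,i_k}\cdot\tau_{i_k}$ is a matrix all of whose rows except the $i_2$-th are zero (the $E$ moves the single nonzero row of $\tau_{i_k}$, which sits in row $i_k$, into row $i_2$), and that row is $f_{i_k}e_{i_k}^{\mathrm{T}}$ up to the $-2$ versus $f$ bookkeeping already absorbed. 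Left-multiplying by $\tau_{i_1}$, which itself has only its $i_1$-th row nonzero equal to $\pm 2 e_{i_1}^{\mathrm{T}}$ or $f_{i_1}e_{i_1}^{\mathrm{T}}$ depending on the relation between $i_1$ and $i_2$, one reads off: if $i_1\neq i_2$ then the resulting matrix has its $i_1$-th row equal to $f_{i_1}$ times the $i_2$-th row of $E_{i_2,i_k}\tau_{i_k}$, which is precisely $E_{i_1,i_k}\cdot f_{i_1}\cdot\tau_{i_k}$, and the coincidence count is unchanged so the new $\rho$ equals $\rho'$ and a new factor $f_{i_1}$ (with $\tau_{i_1}\neq\tau_{i_2}$) enters the product exactly as the formula demands; if $i_1 = i_2$ then $\tau_{i_1}$ acts on its own index, contributing a factor $-2$ (from $\tau_i^2 = -2\tau_i$, cf.\ Proposition \ref{tau-property}(i)), so the new $\rho = \rho' + 1$, no new $f$ factor appears, and $E_{i_1,i_k} = E_{i_2,i_k}$ stays the same — again matching the claimed formula.

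I expect the main obstacle to be purely bookkeeping rather than conceptual: one must check carefully that the index set over which the product $\prod f_{i_t}$ is taken, the exponent $\rho$, and the subscripts of the $E$ matrix all update consistently in the two sub-cases $i_1 = i_2$ and $i_1\neq i_2$, and that the boundary convention $E_{i,i} = I_n$ is applied uniformly so that the length-$1$ base case and the $i_1 = i_2$ step are not special-cased incorrectly. A clean way to organize this is to phrase the induction step entirely as the identity $\tau_{i_1}\cdot\big(E_{i_2,i_k}\tau_{i_k}\big) = E_{i_1,i_k}\cdot c\cdot\tau_{i_k}$ with $c = f_{i_1}$ when $i_1\neq i_2$ and $c = -2$ when $i_1 = i_2$, proved by the row-description of $\tau$ and Remark \ref{E-i-j-exchange}, and then simply fold $c$ into the scalar prefactor. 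Assembling the scalar $(-2)^{\rho'}\cdot\prod f \cdot c = (-2)^{\rho}\cdot\prod_{\text{new}} f$ finishes the proof.
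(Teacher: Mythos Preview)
Your proposal is correct and follows essentially the same approach as the paper: the paper's proof consists of the single sentence ``The proof comes by applying $k$ times Proposition \ref{tau-property},'' and your induction on $k$ is precisely a careful unpacking of that repeated application. Your bookkeeping for the two sub-cases $i_1=i_2$ and $i_1\neq i_2$, together with the convention $E_{i,i}=I_n$, is exactly what is needed to make the one-line argument rigorous.
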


\begin{proof}
The proof comes by applying $k$ times Proposition \ref{tau-property}. 
\end{proof}

\begin{cor}\label{tau-first-non-zero}
Consider a simply-laced mixed-sign Coxeter group $W$, with generators $s_1, s_2, \ldots, s_n$. Then, for $1\leq i_1, i_2, \ldots, i_k\leq n$, where $k$ is a positive integer, 
 every non-zero element of $M_{n}(\mathbb{R})$ of the form $\tau_{i_1}\cdot \tau_{i_2}\cdots \tau_{i_k}$ satisfy the following properties:
 \begin{itemize}
     \item $\tau_{i_1}\cdot \tau_{i_2}\cdots \tau_{i_k}$ contains only one non-zero row;
      \item The non-zero row of $\tau_{i_1}\cdot \tau_{i_2}\cdots \tau_{i_k}$ is the $i_1$-th row;
      \item The $i_1$-th row of $\tau_{i_1}\cdot \tau_{i_2}\cdots \tau_{i_k}$  is a multiple of the non-zero (the $i_k$-th row) row of $\tau_{i_k}$.
 \end{itemize}
\end{cor}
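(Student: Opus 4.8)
\emph{Proof proposal.} The plan is to derive all three assertions as an immediate unpacking of the normal form provided by Proposition \ref{tau-i1-ik}. Assume $\tau_{i_1}\cdot\tau_{i_2}\cdots\tau_{i_k}\neq 0$. By Proposition \ref{tau-i1-ik} this forces $m_{i_t,~i_{t+1}}=3$ for every $1\leq t\leq k-1$, and, as long as $i_1\neq i_k$,
$$\tau_{i_1}\cdot\tau_{i_2}\cdots\tau_{i_k}=c\cdot E_{i_1,~i_k}\cdot\tau_{i_k}$$
for some non-zero scalar $c\in\mathbb{R}$ (the explicit value of $c$ is given by Proposition \ref{tau-i1-ik} but is irrelevant here). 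Thus everything reduces to describing the shape of the matrix $E_{i_1,~i_k}\cdot\tau_{i_k}$.

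First I would recall from Definition \ref{tau} that all rows of $\tau_{i_k}$ other than the $i_k$-th one vanish; hence $\tau_{i_k}$ has exactly one non-zero row, its $i_k$-th row. By Remark \ref{E-i-j-exchange}, left multiplication by $E_{i_1,~i_k}$ exchanges the $i_1$-th and $i_k$-th rows of a matrix and fixes every other row. Applying this to $\tau_{i_k}$ moves its unique non-zero row from position $i_k$ to position $i_1$ and turns the $i_k$-th row into the (zero) $i_1$-th row of $\tau_{i_k}$, while all other rows stay zero. Therefore $E_{i_1,~i_k}\cdot\tau_{i_k}$, and hence also $c\cdot E_{i_1,~i_k}\cdot\tau_{i_k}=\tau_{i_1}\cdots\tau_{i_k}$, has exactly one non-zero row, namely the $i_1$-th, and that row is $c$ times the $i_k$-th (i.e. the non-zero) row of $\tau_{i_k}$. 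This is exactly the content of the three bullet points.

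It remains to treat the degenerate case $i_1=i_k$, where the matrix $E_{i_1,~i_k}$ of Proposition \ref{tau-i1-ik} is not defined. Here I would argue directly: if $N\in M_n(\mathbb{R})$ has all of its non-zero entries in a single row, then for each index $i$ the product $\tau_i\cdot N$ has all of its non-zero entries in its $i$-th row, and that row is a scalar multiple of the distinguished row of $N$ (a one-line computation from the fact that $\tau_i$ has only its $i$-th row non-zero, so $(\tau_i N)_{i,q}=(\tau_i)_{i,r}N_{r,q}$ when $N$ is supported on its $r$-th row). Starting from $N=\tau_{i_k}$ (distinguished row: the $i_k$-th) and multiplying on the left successively by $\tau_{i_{k-1}},\dots,\tau_{i_1}$, an induction on $k$ yields that $\tau_{i_1}\cdots\tau_{i_k}$, when non-zero, has its unique non-zero row in position $i_1$ and that row is a multiple of the $i_k$-th row of $\tau_{i_k}$; this covers $i_1=i_k$ as well, and in fact re-proves the whole corollary without reference to Proposition \ref{tau-i1-ik}. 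I do not foresee any real difficulty: the corollary is pure bookkeeping on top of Proposition \ref{tau-i1-ik}, the only point deserving attention being that the cited matrix $E_{i_1,~i_k}$ is undefined when $i_1=i_k$, which is why the direct argument is needed for that case.
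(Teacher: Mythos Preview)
Your proposal is correct and matches the paper's approach: the paper states this result as a corollary of Proposition~\ref{tau-i1-ik} without any explicit proof, and your argument is precisely the unpacking of that proposition via Definition~\ref{tau} and Remark~\ref{E-i-j-exchange}. Your separate handling of the case $i_1=i_k$ (where $E_{i_1,~i_k}$ is undefined in Definition~\ref{E-i-j}) is a nice bit of care that the paper glosses over.
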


\begin{proposition}\label{sum-tau-i-beginning}
Let $W$ be a simply-laced mixed-sign Coxeter group, which is generated by $s_1, s_2, \ldots, s_n$. Let $t$ be a non-zero element of $\mathbb{R}$ and $1\leq i_1, i_2, \ldots, i_k\leq n$.
Assume  sum of elements of the form $t\cdot \tau_{i_1}\tau_{i_2}\cdots \tau_{i_k}$ equals to $0$. Then, sum of the elements with $\tau_p$ at left equals to $0$ for every $1\leq p\leq n$.
\end{proposition}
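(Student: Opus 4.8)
The plan is to deduce the statement row by row from the matrix identity it is built on, using Corollary~\ref{tau-first-non-zero} to pin each summand down to a single row.

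First I would normalise away the scalar. Since $t\neq 0$, dividing the hypothesis by $t$ shows it is equivalent to a vanishing sum $\sum_{\alpha}\tau_{i_1^{(\alpha)}}\tau_{i_2^{(\alpha)}}\cdots\tau_{i_{k_\alpha}^{(\alpha)}}=0$ in $M_n(\mathbb{R})$, and the desired conclusion is likewise equivalent to its $t$-free version, so $t$ may be ignored. For each $1\le p\le n$ let $\Sigma_p$ denote the partial sum taken over exactly those indices $\alpha$ whose leftmost factor is $\tau_p$, i.e.\ $i_1^{(\alpha)}=p$. With this notation the claim to be proved is precisely that $\Sigma_p=0$ for every $p$.

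Next comes the key structural observation. By Corollary~\ref{tau-first-non-zero} (itself a consequence of Proposition~\ref{tau-i1-ik}), every non-zero product $\tau_{i_1}\cdots\tau_{i_k}$ is a matrix supported on its $i_1$-th row alone; and any summand that happens to equal the zero matrix --- which occurs exactly when some consecutive pair of indices is non-adjacent in $\Gamma$ --- is harmless to the partition. Hence every summand of $\Sigma_p$ is a matrix all of whose rows vanish except possibly the $p$-th, and therefore $\Sigma_p$ itself has this property: it is supported on the $p$-th row alone.

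Finally I would compare $p$-th rows in the identity $\sum_{q=1}^{n}\Sigma_q=0$. For $q\ne p$ the matrix $\Sigma_q$ has zero $p$-th row, so the $p$-th row of the left-hand side equals the $p$-th row of $\Sigma_p$; since the left-hand side is the zero matrix, the $p$-th row of $\Sigma_p$ vanishes. As $\Sigma_p$ has no other possibly-non-zero row, this forces $\Sigma_p=0$, and since $p$ was arbitrary the proposition follows. The argument is essentially bookkeeping about disjoint row supports, so I anticipate no genuine obstacle; the only point deserving a moment's care is the remark that the zero summands do not disturb the partition into the $\Sigma_p$.
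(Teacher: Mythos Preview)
Your proof is correct and follows essentially the same route as the paper's: both invoke Corollary~\ref{tau-first-non-zero} to note that every summand with $\tau_p$ on the left is supported only on the $p$-th row, and then read off the vanishing of each $\Sigma_p$ from the vanishing of the total sum row by row. One minor remark: as the paper's own proof makes explicit, the intended statement allows a \emph{different} nonzero scalar $t_q$ for each summand, so your division-by-$t$ normalization step rests on a slight misreading --- but it is in any case unnecessary, and the remainder of your argument goes through unchanged with arbitrary coefficients.
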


\begin{proof}
Assume 
\begin{equation}\label{tau-product}
    t_1\cdot \tau_{i_{1,1}}\tau_{i_{1,2}}\cdots \tau_{i_{1, k_1}} + t_2\cdot \tau_{i_{2,1}}\tau_{i_{2,2}}\cdots \tau_{i_{2, k_2}} + \cdots + t_{\ell}\cdot \tau_{i_{\ell,1}}\tau_{i_{\ell,2}}\cdots \tau_{i_{\ell, k_{\ell}}}=0.
\end{equation}

where:
\begin{itemize}
    \item $\ell$ and $k_j$ for $1\leq j\leq \ell$ are positive integers;
    \item $1\leq i_{u, v}\leq n$ for $1\leq u\leq \ell$ and $1\leq v\leq k_u$;
    \item  $t_1, t_2, \ldots, t_{\ell}$ are non-zero elements of $\mathbb{R}$.
\end{itemize}
  By Corollary  \ref{tau-first-non-zero}, for every $1\leq q\leq \ell$,  a summand of a form $t_q\cdot \tau_{i_{q,1}}\tau_{i_{q,2}}\cdots \tau_{i_{q, k_1}}$, with $i_{q,1}=p$ for some $1\leq p\leq n$ is a matrix in $M_n(\mathbb{R})$ with one non-zero row at the $p$-th row. Since sum of all the summands at the left hand side of Equation \eqref{tau-product} equals to $0$, sum of all the summands  at the left hand side of Equation \eqref{tau-product} with $\tau_p$ at left (which are the summands with non-zero row only at the $p$-th row)  equals to $0$, for every $1\leq p\leq n$.
\end{proof}

\begin{example}
Consider the Coxeter group 
$$\widetilde{S}_3=\langle s_1, s_2, s_3 ~|~ s_1^2=s_2^2=s_3^2=1, ~(s_1s_2)^3=(s_2s_3)^3=(s_3s_1)^3=1\rangle.$$
Then, 
$$\tau_1 + \tau_1\tau_2 +\tau_1\tau_3 + \tau_3 + \tau_3\tau_2+\tau_3\tau_1+\tau_2+\tau_2\tau_1+\tau_2\tau_3=0.$$
Notice, 
$$\tau_1 + \tau_1\tau_2 +\tau_1\tau_3=0,\quad  \tau_2+\tau_2\tau_1+\tau_2\tau_3=0,\quad  \tau_3 + \tau_3\tau_2+\tau_3\tau_1=0,$$
where $f_1=f_2=f_3=1$ and:

$$
\tau_1=\left(\begin{array}{ccc}
-2 & 1 & 1 \\
0 & 0 & 0 \\
0 & 0 & 0
\end{array}\right) \quad \tau_2=\left(\begin{array}{ccc}
0 & 0 & 0 \\
1 & -2 & 1 \\
0 & 0 & 0
\end{array}\right) \quad \tau_3=\left(\begin{array}{ccc}
0 & 0 & 0 \\
0 & 0 & 0 \\
1 & 1 & -2
\end{array}\right)
$$

\end{example}

\begin{definition}\label{nu}
For every $1\leq i\leq n$, the vector  $\nu_{i}\in \mathbb{R}^n$ is defined to be the $i$-th row of $\tau_i$.
\end{definition}

\begin{proposition}\label{sum-tau-i-finishing}
Let $W$ be a simply-laced mixed-sign Coxeter group, which is generated by $s_1, s_2, \ldots, s_n$. Let $0\neq t\in \mathbb{R}$ and $1\leq i_1, i_2, \ldots, i_k\leq n$ for some positive integer $k$.
Assume  the following holds:
\begin{itemize}
    \item Sum of elements of the form $t\cdot \tau_{i_1}\tau_{i_2}\cdots \tau_{i_k}$ equals to $0$;
    \item The associated bilinear form is a non-degenerate bilinear form (i.e., the set of vectors \\ $\{\nu_{1}, \nu_{2}, \ldots, \nu_{n}\}$ are linearly independent in $\mathbb{R}^n$).
\end{itemize}
 Then, sum of the elements with $\tau_p$ at left and $\tau_q$ at right, equals to $0$ for every ordered pair $(p,q)$ such that $1\leq p, q\leq n$.
\end{proposition}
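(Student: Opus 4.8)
The plan is to build on Proposition~\ref{sum-tau-i-beginning}: we already know that once a $\mathbb{R}$-linear combination of products of the $\tau_i$ vanishes, the subsum consisting of those summands beginning with a fixed $\tau_p$ vanishes by itself. So fix $p$ with $1\leq p\leq n$ and pass to that subsum; every summand in it is of the form $t\cdot\tau_p\tau_{i_2}\cdots\tau_{i_k}$, and by Corollary~\ref{tau-first-non-zero} each such nonzero summand is a matrix whose only nonzero row is the $p$-th row, and that $p$-th row is a scalar multiple of $\nu_{i_k}$, the nonzero row of $\tau_{i_k}$ (when $m_{i_{t},i_{t+1}}=3$ throughout; otherwise the summand is $0$ and can be discarded). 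Thus the $p$-th row of the whole subsum is an $\mathbb{R}$-linear combination of the vectors $\nu_1,\ldots,\nu_n$, where the coefficient of $\nu_q$ is precisely the sum of the scalars coming from those summands that end in $\tau_q$.

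Next I would group the summands of the $p$-subsum according to their last factor $\tau_q$: write it as $\sum_{q=1}^n c_{p,q}\,\nu_q$ where $c_{p,q}\in\mathbb{R}$ is the total scalar contributed by the summands starting with $\tau_p$ and ending with $\tau_q$ (using Proposition~\ref{tau-i1-ik} to read off each scalar as $(-2)^{\rho}\prod f_{i_t}$). Since the whole original sum is $0$, and hence the $p$-subsum is the zero matrix, its $p$-th row is the zero vector, i.e.\ $\sum_{q=1}^n c_{p,q}\,\nu_q=0$. Now invoke the second hypothesis: the bilinear form is non-degenerate, which by the parenthetical remark in the statement is equivalent to $\{\nu_1,\ldots,\nu_n\}$ being linearly independent in $\mathbb{R}^n$. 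Linear independence forces every coefficient $c_{p,q}=0$. But $c_{p,q}$ being zero says exactly that the sum of all the summands with $\tau_p$ at the left and $\tau_q$ at the right equals $0$ as a matrix (that subsum is supported on the $p$-th row and equals $c_{p,q}\nu_q$ there). Running over all $p$ and all $q$ gives the claim.

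The only genuine subtlety — and the step I would state carefully rather than wave through — is the bookkeeping that identifies "the $p$-th row of the subsum" with "$\sum_q c_{p,q}\nu_q$": one must note that summands with $m_{i_t,i_{t+1}}=2$ somewhere contribute the zero matrix (Proposition~\ref{tau-i1-ik}) so they may be dropped, that every surviving summand beginning with $\tau_p$ is supported exactly on row $p$ (Corollary~\ref{tau-first-non-zero}), and that its row $p$ is the prescribed scalar times $\nu_{i_k}=\nu_q$, so that collecting by the value of the last index $q$ is legitimate and exhausts all summands. Everything else is immediate: Proposition~\ref{sum-tau-i-beginning} does the "split by first factor" work, and non-degeneracy does the "separate the $\nu_q$" work. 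I do not anticipate a real obstacle here; the proof is essentially a refinement of Proposition~\ref{sum-tau-i-beginning} obtained by looking at one fixed row and then using linear independence of the rows $\nu_q$.
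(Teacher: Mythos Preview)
Your proof is correct and follows essentially the same approach as the paper: first split by the leftmost factor via Proposition~\ref{sum-tau-i-beginning}, then use Corollary~\ref{tau-first-non-zero} to recognize the $p$-th row of the resulting subsum as a linear combination $\sum_q c_{p,q}\nu_q$, and finally invoke linear independence of the $\nu_q$ to conclude each $c_{p,q}=0$. Your write-up is, if anything, slightly more careful than the paper's in handling the zero summands and making the bookkeeping explicit.
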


\begin{proof}

Consider Equation \eqref{tau-product}, with all the notations. By Proposition \ref{sum-tau-i-beginning}, for every $1\leq p\leq n$ the sum of all the summands at the left hand side of Equation \eqref{tau-product} with $\tau_p$ at left equals to $0$.  
By Corollary \ref{tau-first-non-zero}, every summand at the left hand side of of Equation \eqref{tau-product} with $\tau_p$ at left and $\tau_{q}$ at right is a matrix in $M_n(\mathbb{R})$ with the following properties:
\begin{itemize}
    \item The only non-zero row is the $p$-th row;
    \item The $p$-th row is a multiply of $\nu_{q}$ (where $\nu_q$ is defined in Definition \ref{nu}).
\end{itemize}
Hence, the sum of all the summands at the left hand side of Equation \eqref{tau-product} with $\tau_p$ at left is a matrix with a linear combination of the vectors  $\{\nu_{1}, \nu_{2}, \ldots, \nu_{n}\}$ at the $p$-th row. Since the set of vectors $\{\nu_{1}, \nu_{2}, \ldots, \nu_{n}\}$ are linearly independent, it is satisfied that for every $1\leq q\leq n$ sum of all the summands at the left hand side of Equation \eqref{tau-product} with $\tau_p$ at left and $\tau_q$ at right equals to $0$. 

\end{proof}

Now, we recall the definition of Dickson polynomials  as it is defined by Dickson  \cite{D}. 
\\

The Dickson polynomials of the second kind $\mathbb{E}_{n}(x, \alpha)$ are defined by the following recurrence relation for $n \geq 2$:
\\

$$
\mathbb{E}_{n}(x, \alpha)=x \mathbb{E}_{n-1}(x, \alpha)-\alpha \mathbb{E}_{n-2}(x, \alpha),
$$
with the initial conditions $\mathbb{E}_{0}(x, \alpha)=1$ and $\mathbb{E}_{1}(x, \alpha)=x$.
In the following claim we use Dickson polynomial $\mathbb{E}_{n}{\left(x, \alpha \right)}$ where $x=1$ and $\alpha=f_{i} f_{j}$ (reminder: $f_{i} f_{j}$ is either $+1$ or $-1$).
\\
and get $\mathbb{E}_{n}{\left(1, f_{i} f_{j}\right)}$ with the same markings we set up on top. \\

\begin{proposition}\label{r-dickson}
Let $\tau_i$ and $\tau_j$ be as they are defined in Definition \ref{tau}   and let $r$ be a positive integer then the following holds: 

\begin{enumerate}
 \item 
\begin{align} \label{ase}
{\left[\left(1+\tau_{i}\right)\left(1+\tau_{j}\right)\right]^{r}-\left[\left(1+\tau_{j}\right)\left(1+\tau_{i}\right)\right]^{r}=}\left(f_{i} f_{j}\right)^{r-1} \cdot \left\{\mathbb{E}_{2 r-1}{\left(1~,~ f_{i} f_{j}\right)}\right\} \cdot E_{i, j} \cdot\left(f_{i}  \tau_{j}-f_{j}  \tau_{i}\right)
\end{align}
\item 
\begin{align} \label{aso}
[(1+\tau_{i})(1+\tau_{j})]^{r}(1+\tau_{i}) 
-[(1+\tau_{j})(1+\tau_{i})]^{r}(1+\tau_{j})
=\left(f_{i} f_{j}\right)^r \cdot \left\{\mathbb{E}_{2 r}{\left(1~, ~f_{i} f_{j}\right)}\right\} \cdot\left(\tau_{i}-\tau_{j}\right)
\end{align}
\end{enumerate}
\end{proposition}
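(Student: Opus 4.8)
The plan is to prove both identities simultaneously by induction on $r$, using the algebra of $\tau_i$ and $\tau_j$ established in Proposition \ref{tau-property}. The crucial algebraic input is that whenever $m_{i,j}=3$ we have $\tau_i^2=-2\tau_i$, $\tau_j^2=-2\tau_j$, $\tau_i\tau_j\tau_i=f_if_j\tau_i$, $\tau_j\tau_i\tau_j=f_if_j\tau_j$, and $\tau_i\tau_j=E_{i,j}f_i\tau_j$, $\tau_j\tau_i=E_{i,j}f_j\tau_i$. These relations mean that any word in $\tau_i,\tau_j$ of length $\geq 1$ collapses to a scalar multiple of $\tau_i$, $\tau_j$, $\tau_i\tau_j$, or $\tau_j\tau_i$; in particular the whole computation takes place in the (at most) four-dimensional span of $\{\tau_i,\tau_j,\tau_i\tau_j,\tau_j\tau_i\}$ together with the identity. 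So the first step I would carry out is to expand $[(1+\tau_i)(1+\tau_j)]^r$ explicitly: writing $A=(1+\tau_i)(1+\tau_j)$, I would compute $A = 1 + \tau_i + \tau_j + \tau_i\tau_j$ and then, using (ii) of Proposition \ref{tau-property} (namely $(1+\tau_i)\tau_i=-\tau_i$), show by induction that $A^r = 1 + a_r\tau_i + b_r\tau_j + c_r\tau_i\tau_j$ for suitable scalars $a_r,b_r,c_r$ that satisfy a linear recurrence whose solution is expressed through the Dickson polynomials $\mathbb{E}_{m}(1,f_if_j)$.

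Concretely, the second step is to pin down the recurrence. Multiplying $A^r$ by $A$ on the right and reducing every resulting length-$\leq 3$ word in $\tau_i,\tau_j$ by the relations above yields a $3\times 3$ (really $2\times 2$ once one notes the structure) linear system relating $(a_{r+1},b_{r+1},c_{r+1})$ to $(a_r,b_r,c_r)$. The characteristic behaviour of that recurrence is governed by $x^2 - x\alpha' + \ldots$ with $\alpha = f_if_j$, which is exactly the defining recurrence $\mathbb{E}_{n}(x,\alpha)=x\mathbb{E}_{n-1}(x,\alpha)-\alpha\mathbb{E}_{n-2}(x,\alpha)$ specialized at $x=1$. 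I would therefore guess-and-verify that $a_r, b_r, c_r$ are the appropriate products $(f_if_j)^{\epsilon}\mathbb{E}_{m}(1,f_if_j)$ and confirm the guess by checking it satisfies the recurrence and the initial conditions at $r=0,1$ (where $A^0 = 1$ and $A^1 = 1+\tau_i+\tau_j+\tau_i\tau_j$). By the symmetry $i\leftrightarrow j$, $[(1+\tau_j)(1+\tau_i)]^r = 1 + a_r\tau_j + b_r\tau_i + c_r\tau_j\tau_i$ with the same scalars. Subtracting gives $A^r - (A')^r = (a_r-b_r)(\tau_i-\tau_j) + c_r(\tau_i\tau_j - \tau_j\tau_i)$; then using $\tau_i\tau_j = E_{i,j}f_i\tau_j$ and $\tau_j\tau_i = E_{i,j}f_j\tau_i$ one rewrites $\tau_i\tau_j-\tau_j\tau_i = E_{i,j}(f_i\tau_j - f_j\tau_i)$ and (with the analogous rewriting of $\tau_i - \tau_j$ via $E_{i,j}$, noting $E_{i,j}\tau_i$ equals the appropriate expression) collects everything into the single term $(f_if_j)^{r-1}\mathbb{E}_{2r-1}(1,f_if_j)\cdot E_{i,j}\cdot(f_i\tau_j - f_j\tau_i)$ claimed in \eqref{ase}. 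For \eqref{aso} I would multiply $A^r$ by $(1+\tau_i)$ on the right, again reduce, and subtract the $i\leftrightarrow j$ swapped version; the surviving combination should collapse to $(f_if_j)^r\mathbb{E}_{2r}(1,f_if_j)\cdot(\tau_i-\tau_j)$, and here I expect the $E_{i,j}$ factors to cancel because the extra $(1+\tau_i)$ versus $(1+\tau_j)$ breaks the antisymmetry differently — this is the point to be careful about.

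An alternative, possibly cleaner, route is a direct two-step induction purely on the level of the claimed identities: assume \eqref{ase} and \eqref{aso} hold for $r$, multiply the $r$-th version of \eqref{aso} on the right by $(1+\tau_j)$ (respectively $(1+\tau_i)$) and combine with \eqref{ase} to reach the $(r+1)$-st instance of \eqref{ase}, and similarly push \eqref{ase} to \eqref{aso}. The bookkeeping then reduces to the single Dickson identity $\mathbb{E}_{2r+1}(1,\alpha) = \mathbb{E}_{2r}(1,\alpha) - \alpha\,\mathbb{E}_{2r-1}(1,\alpha)$ and its even counterpart, which are immediate from the defining recurrence. The base case $r=1$ must be checked by hand: $(1+\tau_i)(1+\tau_j) - (1+\tau_j)(1+\tau_i) = \tau_i\tau_j - \tau_j\tau_i = E_{i,j}(f_i\tau_j - f_j\tau_i)$, matching \eqref{ase} since $\mathbb{E}_1(1,\alpha)=1$ and $(f_if_j)^0=1$; and the length-three computation for \eqref{aso} at $r=1$ should give $(f_if_j)\mathbb{E}_2(1,\alpha)(\tau_i-\tau_j)$ with $\mathbb{E}_2(1,\alpha)=1-\alpha$, which one verifies directly using $\tau_i\tau_j\tau_i = f_if_j\tau_i$.

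The main obstacle I anticipate is not conceptual but organizational: correctly tracking the powers of $f_if_j$ and the exact Dickson index ($2r-1$ versus $2r$) through the repeated reductions, and in particular making sure the $E_{i,j}$ matrix — which does not commute with $\tau_i$, $\tau_j$ in the naive sense but interacts with them through the identities $\tau_i\tau_j = E_{i,j}f_i\tau_j$ etc. — is placed consistently. I would handle this by fixing at the outset the normal form "$\text{(scalar)}\cdot 1 + \text{(scalar)}\cdot\tau_i + \text{(scalar)}\cdot\tau_j + \text{(scalar)}\cdot E_{i,j}\tau_i + \text{(scalar)}\cdot E_{i,j}\tau_j$" for every intermediate expression, since $\tau_i\tau_j = f_i E_{i,j}\tau_j$ and $\tau_j\tau_i = f_j E_{i,j}\tau_i$ show these five terms span everything that arises, and then every multiplication by $1+\tau_i$ or $1+\tau_j$ becomes a fixed linear substitution on the five coefficients, reducing the whole proof to verifying that the claimed Dickson-polynomial coefficients are fixed by that substitution.
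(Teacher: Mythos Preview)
Your proposal is essentially the same approach as the paper's: induction on $r$, base case $r=1$ checked directly, and the inductive step driven by the Dickson recurrence $\mathbb{E}_{n}(1,\alpha)=\mathbb{E}_{n-1}(1,\alpha)-\alpha\,\mathbb{E}_{n-2}(1,\alpha)$ together with the collapse of all words in $\tau_i,\tau_j$ to the small span $\{1,\tau_i,\tau_j,\tau_i\tau_j,\tau_j\tau_i\}$ via Proposition~\ref{tau-property}. Your ``alternative, cleaner route'' (assume \eqref{ase} and \eqref{aso} at level $r$, multiply by $(1+\tau_i)$ or $(1+\tau_j)$ and combine) is in fact exactly what the paper does, including the observation that $[(1+\tau_i)(1+\tau_j)]^{r}=1+\rho_{i,i}\tau_i+\rho_{i,j}\tau_i\tau_j+\rho_{j,i}\tau_j\tau_i+\rho_{j,j}\tau_j$ with $\rho_{i,i}=\rho_{j,j}$.

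One small correction to your first route: the ansatz $A^r=1+a_r\tau_i+b_r\tau_j+c_r\tau_i\tau_j$ is missing the $\tau_j\tau_i$ term (already $A^2$ produces it), so you need four scalar coefficients, not three; you effectively acknowledge this later with your five-term normal form $\{1,\tau_i,\tau_j,E_{i,j}\tau_i,E_{i,j}\tau_j\}$, which is the right framework. With that fix, both of your routes go through and match the paper's argument.
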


\begin{proof}
We prove the proposition by induction on $r$:
\\

First, consider  $r=1$ in Equation \eqref{ase}.

$$
\left(1+\tau_{i}\right)\left(1+\tau_{j}\right)-\left(1+\tau_{j}\right)\left(1+\tau_{i}\right)=\tau_{i} \tau_{j}-\tau_{j} \tau_{i}=1 \cdot E_{i, j} \cdot\left(f_{i}  \tau_{j}-f_{j}  \tau_{i}\right)$$
$$
=\left(f_{i} f_{j}\right)^{0} \cdot \left\{\mathbb{E}_{1}{(1~,~ f_{i} f_{j})}\right\} \cdot
E_{i, j}\left(f_{i} \cdot \tau_{j}-f_{j} \cdot \tau_{i}\right).
$$

Now, consider $r=1$ in Equation \eqref{aso}
$$
\left(1+\tau_{i}\right)\left(1+\tau_{j}\right)\left(1+\tau_{i}\right)-\left(1+\tau_{j}\right)\left(1+\tau_{i}\right)\left(1+\tau_{j}\right)=
\left(\tau_{i} \tau_{j} \tau_{i} - \tau_{i}\right)-\left(\tau_{j} \tau_{i}\tau_{j}-\tau_{j}\right)
$$
$$
=\left(f_{i} f_{j} -1\right)\left(\tau_{i}-\tau_{j}\right)
=\left(f_{i} f_{j}\right)^{1} \cdot \left\{\mathbb{E}_{2}{\left(1~, ~f_{i} f_{j}\right)}\right\}\cdot \left(\tau_{i}-\tau_{j}\right).
$$

Assume Equations \eqref{ase} and \eqref{aso} holds for $r=r_0$:

\begin{align} \label{pre}
{\left[\left(1+\tau_{i}\right)\left(1+\tau_{j}\right)\right]^{r_0}-\left[\left(1+\tau_{j}\right)\left(1+\tau_{i}\right)\right]^{r_0}=}\left(f_{i} f_{j}\right)^{r_0-1} \cdot \left\{\mathbb{E}_{2 r_0-1}{\left(1~, ~f_{i} f_{j}\right)}\right\} \cdot E_{i, j} \cdot\left(f_{i} \cdot \tau_{j}-f_{j} \cdot \tau_{i}\right)
\end{align}
and 
\begin{align} \label{pro}
[(1+\tau_{i})(1+\tau_{j})]^{r_0}(1+\tau_{i}) 
-[(1+\tau_{j})(1+\tau_{i})]^{r_0}(1+\tau_{j})
=\left(f_{i} f_{j}\right)^{r_0} \cdot \left\{\mathbb{E}_{2 r_0}{\left(1~,~ f_{i} f_{j}\right)}\right\} \cdot\left(\tau_{i}-\tau_{j}\right)
\end{align}

Now, we prove the lemma for $r=r_0+1$, where using Equations \eqref{pre} and \eqref{pro}: 

\begin{equation}\label{r0+1-start}
\begin{aligned} 
&\left[\left(1+\tau_{i}\right)\left(1+\tau_{j}\right)\right]^{r_0+1}-\left[\left(1+\tau_{j}\right)\left(1+\tau_{i}\right)\right]^{r_0+1}\\
&={\left[\left(1+\tau_{i}\right)\left(1+\tau_{j}\right)\right]^{r_0}\left(1+\tau_{i}\right)\left(1+\tau_{j}\right)-\left(1+\tau_{j}\right)\left(1+\tau_{i}\right)\left[\left(1+\tau_{j}\right)\left(1+\tau_{i}\right)\right]^{r_0}}\\
&=\left[\left(1+\tau_{i}\right)\left(1+\tau_{j}\right)\right]^{r_0}\left(1+\tau_{i}\right)
+\left[\left(1+\tau_{i}\right)\left(1+\tau_{j}\right)\right]^{r_0}\left(1+\tau_{i}\right) \tau_{j}\\
&-\left(1+\tau_{i}\right)\left[\left(1+\tau_{j}\right)\left(1+\tau_{i}\right)\right]^{r_0}
-\tau_{j}\left(1+\tau_{i}\right)\left[\left(1+\tau_{j}\right)\left(1+\tau_{i}\right)\right]^{r_0} .
\end{aligned}
\end{equation}
\\
Since $\left[\left(1+\tau_{i}\right)\left(1+\tau_{j}\right)\right]^{r_0}\left(1+\tau_{i}\right)-\left(1+\tau_{i}\right)\left[\left(1+\tau_{j}\right)\left(1+\tau_{i}\right)\right]^{r_0}=0$,

Equation \eqref{r0+1-start} is equivalent to:

\begin{equation}
\begin{aligned} 
&\left[\left(1+\tau_{i}\right)\left(1+\tau_{j}\right)\right]^{r_0}\left(1+\tau_{i}\right) \tau_{j} -\tau_{j}\left(1+\tau_{i}\right)\left[\left(1+\tau_{j}\right)\left(1+\tau_{i}\right)\right]^{r_0}=\\
&\left[\left(1+\tau_{i}\right)\left(1+\tau_{j}\right)\right]^{r_0} \tau_{j}
+\left[\left(1+\tau_{i}\right)\left(1+\tau_{j}\right)\right]^{r_0} \tau_{i}\tau_{j}
-\tau_{j}\left[\left(1+\tau_{j}\right)\left(1+\tau_{i}\right)\right]^{r_0}
-\tau_{j}\tau_{i}\left[\left(1+\tau_{j}\right)\left(1+\tau_{i}\right)\right]^{r_0}
\end{aligned}
\end{equation}

Now, by using $(1+\tau_j)\tau_j=\tau_j(1+\tau_j)=-\tau_j$, as it is described in part (ii) of Proposition \ref{tau-property}  we get the following equation:
\begin{align} \label{r1}
\left[\left(1+\tau_{i}\right)\left(1+\tau_{j}\right)\right]^{r_0} \tau_{j}-\tau_{j}\left[\left(1+\tau_{j}\right)\left(1+\tau_{i}\right)\right]^{r_0}=
-\left[\left(1+\tau_{i}\right)\left(1+\tau_{j}\right)\right]^{r_0}+\left[\left(1+\tau_{j}\right)\left(1+\tau_{i}\right)\right]^{r_0}
\end{align}
 
Therefore, by using Equation \eqref{pre}:
\begin{equation}\label{r4}
\begin{aligned} 
&\left[\left(1+\tau_{i}\right)\left(1+\tau_{j}\right)\right]^{r_0} \tau_{j}-\tau_{j}\left[\left(1+\tau_{j}\right)\left(1+\tau_{i}\right)\right]^{r_0}\\ &=
-\left(f_{i} f_{j}\right)^{r_0-1} \cdot \left\{\mathbb{E}_{2 r_0-1}{\left(1~,~ f_{i} f_{j}\right)}\right\} \cdot E_{i, j} \cdot\left(f_{i} \tau_{j}-f_{j}  \tau_{i}\right)
\end{aligned}
\end{equation}

Since both $f_i$ and $f_j$ are equal either to  $1$ or to $-1$, it is always satisfied $(f_if_j)^2=1$ so one can multiply Equation \eqref{r4} by $(f_if_j)^2=1$ and obtain
\begin{equation}\label{r4+fifj2}
\begin{aligned}
&\left[\left(1+\tau_{i}\right)\left(1+\tau_{j}\right)\right]^{r_0} \tau_{j}-\tau_{j}\left[\left(1+\tau_{j}\right)\left(1+\tau_{i}\right)\right]^{r_0} \\ &=-\left(f_{i} f_{j}\right)^{r_0+1} \cdot \left\{\mathbb{E}_{2 r_0-1}{\left(1~,~ f_{i} f_{j}\right)}\right\} \cdot E_{i, j} \cdot\left(f_{i} \tau_{j}-f_{j}  \tau_{i}\right).
\end{aligned}
\end{equation}

Now we consider the following summands of Equation \eqref{r0+1-start}

$$\left[\left(1+\tau_{i}\right)\left(1+\tau_{j}\right)\right]^{r_0} \tau_{i}\tau_{j}
-\tau_{j}\tau_{i}\left[\left(1+\tau_{j}\right)\left(1+\tau_{i}\right)\right]^{r_0}.$$
\medskip

Now, by using Equation \eqref{pro}:

$$
\begin{aligned}
&\left(f_{i} f_{j}\right)^{r_0}\cdot  \left\{\mathbb{E}_{2 r_0}{\left(1, f_{i} f_{j}\right)}\right\}\cdot\left(\tau_{i} -\tau_{j}\right) \\&=
{\left[\left(1+\tau_{i}\right)\left(1+\tau_{j}\right)\right]^{r_0}\left(1+\tau_{i}\right)-\left[\left(1+\tau_{j}\right)\left(1+\tau_{i}\right)\right]^{r_0}\left(1+\tau_{j}\right)} \\
&={\left[\left(1+\tau_{i}\right)\left(1+\tau_{j}\right)\right]^{r_0}\left(1+\tau_{i}\right)-\left(1+\tau_{j}\right)\left[\left(1+\tau_{i}\right)\left(1+\tau_{j}\right)\right]^{r_0}} \\
&={\left[\left(1+\tau_{i}\right)\left(1+\tau_{j}\right)\right]^{r_0}+\left[\left(1+\tau_{i}\right)\left(1+\tau_{j}\right)\right]^{r_0} \tau_{i}-\left[\left(1+\tau_{i}\right)\left(1+ \tau_{j}\right)\right]^{r_0}-\tau_{j}\left[\left(1+\tau_{i}\right)\left(1+\tau_{j}\right)\right]^{r_0}} \\
&={\underbrace{\left[\left(1+\tau_{i}\right)\left(1+\tau_{j}\right)\right]^{r_0} \tau_{i}}_{\text {A }}
-\underbrace{\tau_{j}\left[\left(1+\tau_{i}\right)\left(1+\tau_{j}\right)\right]^{r_0}}_{\text {B }}}
.
\end{aligned}
$$

Now, notice that $\left[\left(1+\tau_{i}\right)\left(1+\tau_{j}\right)\right]^{r_0}$ is a sum of elements of the form: $(\tau_i\tau_j)^k$, ~$(\tau_i\tau_j)^k \tau_i$, ~$(\tau_j\tau_i)^k$, ~$(\tau_j\tau_i)^k\tau_j$
   for $0\leq k\leq r_0$. Then, by Proposition \ref{tau-property}, there exists real numbers $\rho_{i,~i}$, $\rho_{i,~j}$, $\rho_{j,~i}$, $\rho_{j,~j}$ such that :
    $$\left[\left(1+\tau_{i}\right)\left(1+\tau_{j}\right)\right]^{r_0}=1+\rho_{i,~i}\tau_i + \rho_{i,~j}\tau_i\tau_j + \rho_{j,~i}\tau_j\tau_i + \rho_{j,~j}\tau_j.$$
    
    By symmetry, it is easy to see that $\rho_{i,~i}=\rho_{j,~j}$.
    Hence, we get:
    \begin{equation*}
    \begin{aligned}
       &\left[\left(1+\tau_{i}\right)\left(1+\tau_{j}\right)\right]^{r_0}\tau_i-\tau_j\left[\left(1+\tau_{i}\right)\left(1+\tau_{j}\right)\right]^{r_0} \\ 
       &=\left(1+\rho_{i,~i}\tau_i + \rho_{i,~j}\tau_i\tau_j + \rho_{j,~i}\tau_j\tau_i + \rho_{i,~i}\tau_j\right)\tau_i-\tau_j\left(1+\rho_{i,~i}\tau_i + \rho_{i,~j}\tau_i\tau_j + \rho_{j,~i}\tau_j\tau_i + \rho_{i,~i}\tau_j\right).
    \end{aligned}
    \end{equation*}
    
Then by using Proposition \ref{tau-property}, we get:
 
 \begin{equation}\label{tau-dickson-connection}
 \begin{aligned} 
& \left(f_{i} f_{j}\right)^{r_0}\cdot  \left\{\mathbb{E}_{2 r_0}{\left(1, f_{i} f_{j}\right)}\right\}\cdot\left(\tau_{i} -\tau_{j}\right) \\
& =\left[\left(1+\tau_{i}\right)\left(1+\tau_{j}\right)\right]^{r_0}\tau_i-\tau_j\left[\left(1+\tau_{i}\right)\left(1+\tau_{j}\right)\right]^{r_0} \\
  &=\left(1+\rho_{i,~i}\tau_i + \rho_{i,~j}\tau_i\tau_j + \rho_{i,~j}\tau_j\tau_i + \rho_{i,~i}\tau_j\right)\tau_i-\tau_j\left(1+\rho_{i,~i}\tau_i + \rho_{i,~j}\tau_i\tau_j + \rho_{i,~j}\tau_j\tau_i + \rho_{i,~i}\tau_j\right) \\
  & =\left(1-2\rho_{i,~i}+\rho_{i,~j}\cdot \tilde{f}_{i,~j}\cdot\tilde{f}_{j,~i}\right)\left(\tau_i-\tau_j\right).
 \end{aligned}
\end{equation}

Now, notice,
\begin{equation*}
\begin{aligned}
    &\left[\left(1+\tau_{i}\right)\left(1+\tau_{j}\right)\right]^{r_0} \tau_{i}\tau_{j}
-\tau_{j}\tau_{i}\left[\left(1+\tau_{j}\right)\left(1+\tau_{i}\right)\right]^{r_0}\\
&=\left(1-2\rho_{i,~i}+\rho_{i,~j}\cdot \tilde{f}_{i,~j}\cdot\tilde{f}_{j,~i}\right)\left(\tau_i\tau_j-\tau_j\tau_i\right).
\end{aligned}
\end{equation*}

Hence, by Equation \eqref{tau-dickson-connection},
\begin{equation}
    \begin{aligned}
    &\left[\left(1+\tau_{i}\right)\left(1+\tau_{j}\right)\right]^{r_0} \tau_{i}\tau_{j}
-\tau_{j}\tau_{i}\left[\left(1+\tau_{j}\right)\left(1+\tau_{i}\right)\right]^{r_0} \\ &=\left(f_{i} f_{j}\right)^{r_0} \left\{\mathbb{E}_{2 r_0}{\left(1~,~ f_{i} f_{j}\right)}\right\}\left(\tau_{i}\tau_{j} -\tau_{j}\tau_{i}\right).
    \end{aligned}
\end{equation}

By using Proposition \ref{tau-property} for the expression $\left(\tau_{i}\tau_{j} -\tau_{j}\tau_{i}\right)$, 
we get
\begin{equation} \label{r5}
\begin{aligned}
&\left[\left(1+\tau_{i}\right)\left(1+\tau_{j}\right)\right]^{r_0} \tau_{i}\tau_{j}
-\tau_{j}\tau_{i}\left[\left(1+\tau_{j}\right)\left(1+\tau_{i}\right)\right]^{r_0}\\
&=(f_if_j)^{r_0}\cdot1\cdot \left\{\mathbb{E}_{2r_0}{\left(1~,~ f_{i} f_{j}\right)}\right\}\cdot E_{i,~j}\cdot(f_i\tau_j-f_j\tau_i).
\end{aligned}
\end{equation}

Finally we have:

$$
\begin{aligned}
&\left[\left(1+\tau_{i}\right)\left(1+\tau_{j}\right)\right]^{r_0+1}-\left[\left(1+\tau_{j}\right)\left(1+\tau_{i}\right)\right]^{r_0+1}\\
&=\left[\left(1+\tau_{i}\right)\left(1+\tau_{j}\right)\right]^{r_0}\left(1+\tau_{i}\right) \tau_{j} -\tau_{j}\left(1+\tau_{i}\right)\left[\left(1+\tau_{j}\right)\left(1+\tau_{i}\right)\right]^{r_0}\\
&=\left[\left(1+\tau_{i}\right)\left(1+\tau_{j}\right)\right]^{r_0} \tau_{j}
+\left[\left(1+\tau_{i}\right)\left(1+\tau_{j}\right)\right]^{r_0} \tau_{i}\tau_{j}
-\tau_{j}\left[\left(1+\tau_{j}\right)\left(1+\tau_{i}\right)\right]^{r_0}
-\tau_{j}\tau_{i}\left[\left(1+\tau_{j}\right)\left(1+\tau_{i}\right)\right]^{r_0}\\
&=(f_if_j)^r\cdot1\cdot \left\{\mathbb{E}_{2r_0}{\left(1~, ~f_{i} f_{j}\right)}\right\}\cdot E_{i,~j}\cdot(f_i\tau_j-f_j\tau_i)\\
&-\left(f_{i} f_{j}\right)^{r_0+1} \cdot \left\{\mathbb{E}_{2 r_0-1}{\left(1~, ~f_{i} f_{j}\right)}\right\} \cdot E_{i, j} \cdot\left(f_{i} \tau_{j}-f_{j}  \tau_{i}\right)\\
&=\left(f_{i} f_{j}\right)^r[1\cdot \left\{\mathbb{E}_{2r_0}{\left(1~, ~f_{i} f_{j}\right)}\right\}-(f_if_j)\cdot\left\{\mathbb{E}_{2r_0-1}{\left(1~, ~f_{i} f_{j}\right)}\right\}]\cdot E_{i,~j}\cdot (f_i\tau_j-f_j\tau_i)\\
&=\left(f_{i} f_{j}\right)^r\cdot \left\{\mathbb{E}_{2r_0+1}{\left(1~, ~f_{i} f_{j}\right)}\right\}\cdot E_{i,~j}\cdot (f_i\tau_j-f_j\tau_i).
\end{aligned}
$$

Hence, Equation \eqref{ase} holds for every $r$.\\

The proof of Equation \eqref{aso} is very similar to the proof of Equation \eqref{ase} and done by using the same technique.
\end{proof}




\section{Mixed-sign Coxeter graphs which are  simply-laced line or simple cycle}\label{simply-laced-line-cycle}

In this section we give a characterization in terms of generators and relations  for the mixed-sign Coxeter groups, which associated graph is a simply-laced line or a simply-laced simple cycle.

\subsection{The mixed-sign Coxeter graph is a simply laced line}\label{simply-laced-line}
 
 Now, we consider mixed-sign Coxeter groups where the associated mixed-sign Coxeter graph is a line with more than two vertices, and all the edges are simply-laced.

\begin{center}
	\includegraphics[width=.43\textwidth]{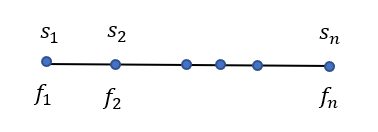}
	\end{center}
	\begin{center}
	
{\scriptsize { \normalsize $Fig. \ 1.$}}
 	    \end{center}
	
 \begin{definition}\label{tau-i,j,i}
 Consider the  mixed-sign Coxeter graph with $n$ vertices $s_1, s_2, \ldots, s_n$ which are signed by $f_1, f_2, \ldots, f_n$ respectively, such that for $1\leq i\leq n-1$ the vertex $s_i$ is connected to $s_{i+1}$ by a simply-laced edge, as it is described in Fig 1. For $i_1, i_2, j$ such that $1\leq i_1\leq j\leq i_2\leq n$, We use the following notations:
 
  \begin{itemize}
  
    \item $s_{i_1, j, i_2} := s_{i_1} s_{i_1+1} \cdots s_{j-1} s_{j} s_{j-1} \cdots s_{i_2+1}s_{i_2}$
    \item $\tau_{\left(i_{1}, j, i_{2}\right)}:=\left(\prod_{k=i_{1}}^{j-1} 
\tau_{k}\right) \tau_{j}\left(\prod_{k=1}^{j-i_{2}} \tau_{j-k}\right)=
\tau_{i_{1}} \ldots \tau_{j-1} \tau_{j}\tau_{j-1} \ldots \tau_{i_{2}}$
  \item $\tau_{i_1 , j , i_2}:=
   \sum_{k_{1}=i_1}^{j} \sum_{k_{2}=i_2}^{j} \tau_{\left(k_{1}, j, k_{2}\right)}$
  \end{itemize}
 \end{definition}

 \begin{proposition}\label{tau-middle}   Consider the mixed-sign Coxeter graph $\Gamma$ and all the notations of Definition \ref{tau-i,j,i}. Then the mixed-sign Coxeter group $W(\Gamma) $ satisfies the following property:
 \begin{equation*}
\begin{aligned}
 \pi_{s_{i_1, j, i_2}}&=(1+\tau_{i_1})(1+\tau_{i_1+1})\cdots(1+\tau_{j-1})(1+\tau_{j})(1+\tau_{j-1})\cdots(1+\tau_{i_2-1})(1+\tau_{i_2}) \\ 
 & = 1+ \sum_{k_{1}=i_1}^{j} \sum_{k_{2}=i_2}^{j} \tau_{\left(k_{1}, j, k_{2}\right)} = 1+\tau_{i_{1}, j, i_{2}},   
\end{aligned}
\end{equation*}

for every $i_1, i_2$, such that $i_1<j$ and $i_2<j$.
\end{proposition}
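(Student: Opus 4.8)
The first equality is immediate: $\pi$ is a group homomorphism and $\pi_{s_k}=1+\tau_k$ for every $k$ by Definition \ref{tau}, so the image of the word $s_{i_1,j,i_2}=s_{i_1}s_{i_1+1}\cdots s_{j-1}s_js_{j-1}\cdots s_{i_2+1}s_{i_2}$ is exactly the displayed product of the matrices $1+\tau_k$. The content lies in the second equality, which I would prove by induction, concentrating on the case $i_1=i_2=i$ — both the crux of the computation and the instance actually invoked in Theorem \ref{line-3-relation}, where the word is $s_{i+1,j,i+1}$. It is worth noting that in this case $s_{i,j,i}=w\,s_j\,w^{-1}$ with $w=s_is_{i+1}\cdots s_{j-1}$, so $\pi_{s_{i,j,i}}=1+\pi_w\tau_j\pi_w^{-1}$ is a rank-one perturbation of the identity, which explains the shape of the claimed formula. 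I would run the induction downward on $i$ (equivalently on $j-i$) through the factorization $s_{i,j,i}=s_i\cdot s_{i+1,j,i+1}\cdot s_i$, that is, $\pi_{s_{i,j,i}}=(1+\tau_i)\,\pi_{s_{i+1,j,i+1}}\,(1+\tau_i)$.

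The base case $i=j-1$ is a one-line computation: expanding $(1+\tau_{j-1})(1+\tau_j)(1+\tau_{j-1})$ and using $\tau_{j-1}^2=-2\tau_{j-1}$ and $\tau_{j-1}\tau_j\tau_{j-1}=f_{j-1}f_j\tau_{j-1}$ from parts (i) and (iii) of Proposition \ref{tau-property} gives $1+\tau_j+\tau_{j-1}\tau_j+\tau_j\tau_{j-1}+f_{j-1}f_j\tau_{j-1}=1+\tau_{j-1,j,j-1}$. For the inductive step one substitutes $\pi_{s_{i+1,j,i+1}}=1+\tau_{i+1,j,i+1}$ and expands; by part (i) of Proposition \ref{tau-property} the contribution $2\tau_i+\tau_i^2$ disappears, leaving $1+\tau_{i+1,j,i+1}+\tau_i\tau_{i+1,j,i+1}+\tau_{i+1,j,i+1}\tau_i+\tau_i\tau_{i+1,j,i+1}\tau_i$. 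The heart of the matter is evaluating these three products: every summand $\tau_{(k_1,j,k_2)}$ of $\tau_{i+1,j,i+1}$ starts with $\tau_{k_1}$ and ends with $\tau_{k_2}$, with $k_1,k_2\ge i+1$, so left multiplication by $\tau_i$ annihilates it unless $k_1=i+1$ — because $|i-k_1|\ge 2$ forces $m_{i,k_1}=2$, hence $\tau_i\tau_{k_1}=0$ by part (iv) of Proposition \ref{tau-property} — and when $k_1=i+1$ it merely prolongs the palindrome, $\tau_i\tau_{(i+1,j,k_2)}=\tau_{(i,j,k_2)}$; the mirror statement holds on the right. Hence $\tau_i\tau_{i+1,j,i+1}=\sum_{k_2=i+1}^{j}\tau_{(i,j,k_2)}$, $\tau_{i+1,j,i+1}\tau_i=\sum_{k_1=i+1}^{j}\tau_{(k_1,j,i)}$ and $\tau_i\tau_{i+1,j,i+1}\tau_i=\tau_{(i,j,i)}$; adding these to $1+\tau_{i+1,j,i+1}$ and splitting the double sum defining $\tau_{i,j,i}$ according to whether $k_1=i$ or $k_1>i$ and whether $k_2=i$ or $k_2>i$ yields exactly $1+\tau_{i,j,i}$.

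The step I expect to be the real obstacle is precisely the evaluation of $\tau_i\tau_{(k_1,j,k_2)}$ and $\tau_{(k_1,j,k_2)}\tau_i$, because of the degenerate configurations: when $k_2=j$ the descending tail is empty, when $k_1=j$ the ascending head is empty, and when $i+1=j$ one is back at the base case. In each of these one must check that the putative contribution vanishes, which comes down to $i$ and $j$ being at distance at least $2$ in the inductive range, together with part (iv) of Proposition \ref{tau-property}, using that on a line $m_{k,\ell}=3$ holds precisely when $|k-\ell|=1$, so that no products of $\tau$'s at larger distance can survive. A tidy way to make this uniform is to first record the auxiliary identities $\tau_a\tau_{(k_1,j,k_2)}=\tau_{(a,j,k_2)}$ for $a=k_1-1$, $\tau_a\tau_{(k_1,j,k_2)}=-2\,\tau_{(k_1,j,k_2)}$ for $a=k_1$, and $\tau_a\tau_{(k_1,j,k_2)}=0$ otherwise (with the mirrored versions for right multiplication), each an immediate consequence of Proposition \ref{tau-property}; with these in hand the induction reduces to a short formal manipulation of the double sum.
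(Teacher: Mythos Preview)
Your approach is essentially the paper's: both prove the symmetric case $i_1=i_2=i$ by induction on $j-i$ through the factorization $\pi_{s_{i,j,i}}=(1+\tau_i)\,\pi_{s_{i+1,j,i+1}}\,(1+\tau_i)$, with the same base case and the same inductive expansion. Your treatment of the key step --- showing $\tau_i\tau_{(k_1,j,k_2)}=0$ for $k_1>i+1$ and $\tau_i\tau_{(i+1,j,k_2)}=\tau_{(i,j,k_2)}$ via Proposition~\ref{tau-property} --- is in fact more explicit than the paper's, which simply asserts the outcome of the multiplication in Equation~\eqref{i+p+1}.

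The one point to flag is scope: you deliberately restrict to $i_1=i_2$, whereas the proposition is stated for general $i_1,i_2<j$. The paper handles this by setting $i=\max\{i_1,i_2\}$, running the induction to obtain $1+\tau_{i,j,i}$, and then multiplying on the appropriate side by the remaining factors $(1+\tau_{i-1})\cdots(1+\tau_{\min\{i_1,i_2\}})$; each such multiplication extends the range of one summation index by one, by exactly the same vanishing/prolongation mechanism you isolated. Since you already have those auxiliary identities recorded, adding this final step is routine, but as written your argument does not yet cover the full statement.
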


\begin{proof}

Let define $i:=\max\{i_1, ~i_2\}$.
Now, we prove the proposition by induction on $j-i$. 

For $j-i=1$ the following holds:

 $$\pi_{s_{i} s_{j}  s_{i}}=\left(1+\tau_{i}\right)\left(1+\tau_{j}\right)\left(1+\tau_{i}\right)=1+\tau_{i} \tau_{j}+\tau_{j}\tau_{i}+\tau_{i} \tau_{j} \tau_{i}+\tau_{j}.$$
Hence,

\begin{itemize}
    \item If  $i=i_1$, then  $j=i_1+1$ (since $j-i=1)$ and $i_2<i_1$, and then 
    \begin{equation*}
    \begin{aligned}
        \pi_{s_{i_1, j, i_2}} &=\pi_{s_{i_1} s_j s_{i_1} s_{i_1-1}\cdots s_{i_2}} \\ 
        &= \left(1+\tau_{i}\right)\left(1+\tau_{j}\right)\left(1+\tau_{i}\right)\left(1+\tau_{i_1-1}\right)\cdots \left(1+\tau_{i_2}\right) \\ &=\left(1+\tau_{i} \tau_{j}+\tau_{j}\tau_{i}+\tau_{i} \tau_{j} \tau_{i}+\tau_{j}\right)\left(1+\tau_{i_1-1}\right)\cdots \left(1+\tau_{i_2}\right) \\
        & = 1+ \sum_{k_{1}=i_1}^{j} \sum_{k_{2}=i_2}^{j} \tau_{\left(k_{1}, j, k_{2}\right)} = 1 + \tau_{i_{1}, j, i_{2}}
    \end{aligned}
    \end{equation*}
    
 \item If  $i=i_2$, then  $j=i_2+1$ (since $j-i=1)$ and  $i_1<i_2$, and then 
    
    \begin{equation*}
    \begin{aligned}
        \pi_{s_{i_1, j, i_2}} &=\pi_{s_{i_1}\cdots s_{i_2-1} s_{i_2} s_j s_{i_2}}  \\ 
        &= \left(1+\tau_{i_1}\right)\cdots \left(1+\tau_{i_2-1}\right)\left(1+\tau_{i}\right)\left(1+\tau_{j}\right)\left(1+\tau_{i}\right) \\ &=\left(1+\tau_{i_1}\right)\cdots \left(1+\tau_{i_2-1}\right)\left(1+\tau_{i} \tau_{j}+\tau_{j}\tau_{i}+\tau_{i} \tau_{j} \tau_{i}+\tau_{j}\right) \\
        & = 1+ \sum_{k_{1}=i_1}^{j} \sum_{k_{2}=i_2}^{j} \tau_{\left(k_{1}, j, k_{2}\right)} = 1 + \tau_{i_{1}, j, i_{2}}
    \end{aligned}
    \end{equation*}

\end{itemize}
 Assume by induction the theorem holds for $j-i=p>1$. Then $j=i+p$.

\begin{equation}\label{i,i+p,i}
  \left(\prod_{k=i}^{i+p-1} \pi_{s_{k}}\right) \pi_{s_{i+p}}\left(\prod_{k=i}^{i+p-1} \pi_{s_{i+p-k}}\right)=\pi_{s_{i, i+p, i}}=1+\sum_{k_{1}=i}^{i+p} \sum_{k_{2}=i}^{i+p} \tau_{\left(k_{1}, i+p, k_{2}\right)}=1+\tau_{i, i+p, i}
  \end{equation}
Now, we consider $j-i=p+1$.

\begin{equation}\label{i+1, i+p+1}
    \left(\prod_{k=i}^{i+p} s_{k}\right) s_{i+p+1}\left(\prod_{k=i}^{i+p} s_{i+p+1-k}\right)=  s_i \left(\prod_{k=i+1}^{i+p} s_{k}\right) s_{i+p+1}\left(\prod_{k=i+1}^{i+p} s_{i+p+1-k}\right) s_i.
\end{equation}
Since $(i+p+1)-(i+1)=p$, by Equation \eqref{i,i+p,i}, we get that Equation \eqref{i+1, i+p+1} is equivalent to: 

\begin{equation}\label{i+p+1}
\begin{aligned}
 &\left(\prod_{k=i}^{i+p} \pi_{s_{k}}\right) \pi_{s_{i+p+1}}\left(\prod_{k=i}^{i+p} \pi_{s_{i+p+1-k}}\right)= \pi_{s_i} \left(\prod_{k=i+1}^{i+p} \pi_{s_{k}}\right) \pi_{s_{i+p+1}}\left(\prod_{k=i+1}^{i+p} \pi_{s_{i+p+1-k}}\right) \pi_{s_i} \\ &= \left(1+\tau_i\right)\left(1+\tau_{i+1, i+p+1, i+1}\right)\left(1+\tau_i\right) = 1+\tau_{i, i+p+1, i}.
\end{aligned}
\end{equation}

Then, 
\begin{itemize}
    \item In case $i_2<i_1$: Multiplying Equation \eqref{i+p+1} by $\left(1+\tau_{i_1-1}\right)\left(1+\tau_{i_1-2}\right)\cdots \left(1+\tau_{i_2}\right)$ at the right, we get
    $$\pi_{s_{i, i+p+1, i_2}}=\left(1+\tau_{i, i+p+1, i}\right)\left(1+\tau_{i_1-1}\right)\left(1+\tau_{i_1-2}\right)\cdots \left(1+\tau_{i_2}\right)=1+\tau_{i, i+p+1, i_2}.$$
    \item In case $i_1<i_2$: Multiplying Equation \eqref{i+p+1} by  $\left(1+\tau_{i_1}\right)\left(1+\tau_{i_1+1}\right)\cdots \left(1+\tau_{i_2-1}\right)$ at the left, we get
    $$\pi_{s_{i_1, i+p+1, i}}=\left(1+\tau_{i_1}\right)\left(1+\tau_{i_1+1}\right)\cdots \left(1+\tau_{i_2-1}\right)\left(1+\tau_{i, i+p+1, i}\right)=1+\tau_{i_1, i+p+1, i}.$$ 
\end{itemize}

Hence, the proposition holds for every $1\leq i_1<j<i_2\leq n$.

\end{proof}

\begin{proposition}\label{tau-simply-laced}
Consider the mixed-sign Coxeter graph $\Gamma$ which is a line with vertices $s_1, s_2, \ldots, s_n$, which are signed by $f_1, f_2, \ldots, f_n$ respectively, such that for $1\leq i\leq n-1$, the vertices $s_i$ and $s_{i+1}$ are connected by a simply-laced edge, as it is described in Fig.1. For $1\leq i,~j\leq n$, let $E_{i,~j}$ be the matrix of $M_n(\mathbb{R})$ as it is defined in Definition \ref{E-i-j}. Then for $1\leq i<j\leq n$, the following holds:
\begin{itemize}
    \item $\prod_{k=i}^j \tau_{k}=E_{i,~j}\cdot \prod_{k=i}^{j-1}f_k\cdot \tau_j$
    \item $\tau_{\left(i,~j,~i\right)}=f_i\cdot f_j\cdot \tau_i$
 \item 
\begin{equation}\label{asec}
\begin{aligned} 
&\left[\left(1+\tau_{i}\right)\left(1+\tau_{i+1,~j,~i+1}\right)\right]^{r}-\left[\left(1+\tau_{i+1,~j,~i+1}\right)\left(1+\tau_{i}\right)\right]^{r}\\ &=\left(f_{i} f_{j}\right)^{r-1} \cdot \left\{\mathbb{E}_{2 r-1}{\left(1~,~ f_{i} f_{j}\right)}\right\} \cdot \left(\tau_{i}\cdot \tau_{i+1,~j,~i+1}-\tau_{i+1,~j,~i+1}\cdot \tau_{i}\right).
\end{aligned}
 \end{equation}
 \item 
\begin{equation}\label{asoc}
\begin{aligned} 
&[(1+\tau_{i})(1+\tau_{i+1,~j,~i+1})]^{r}(1+\tau_{i}) 
-[(1+\tau_{i+1,~j,~i+1})(1+\tau_{i})]^{r}(1+\tau_{i+1,~j,~i+1})
\\ &=\left(f_{i} f_{j}\right)^r \cdot \left\{\mathbb{E}_{2 r}{\left(1~, ~f_{i} f_{j}\right)}\right\} \cdot\left(\tau_{i}-\tau_{i+1,~j,~i+1}\right).
\end{aligned}
\end{equation}
\end{itemize}
\end{proposition}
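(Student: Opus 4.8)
The plan is to reduce all four bullets to properties of a single auxiliary element that behaves exactly like a $\tau$. First I would prove the two displayed product formulas. For $\prod_{k=i}^{j}\tau_k = E_{i,j}\cdot\prod_{k=i}^{j-1}f_k\cdot\tau_j$ I would induct on $j-i$: the base case $j=i+1$ is the identity $\tau_i\tau_{i+1}=E_{i,i+1}f_i\tau_{i+1}$ from part (iii) of Proposition~\ref{tau-property}, and the inductive step multiplies $\prod_{k=i}^{j-1}\tau_k = E_{i,j-1}\prod_{k=i}^{j-2}f_k\cdot\tau_{j-1}$ on the right by $\tau_j$, using $\tau_{j-1}\tau_j = E_{j-1,j}f_{j-1}\tau_j$ and the fact that $E_{i,j-1}E_{j-1,j} = E_{i,j}$ (a transposition computation: moving the $i$-th row to slot $j-1$ and then swapping slots $j-1,j$ yields the same as moving the $i$-th row to slot $j$, since the other slots are untouched). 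For $\tau_{(i,j,i)} = f_if_j\cdot\tau_i$, by definition $\tau_{(i,j,i)} = \tau_i\tau_{i+1}\cdots\tau_{j-1}\tau_j\tau_{j-1}\cdots\tau_i$; I would group this as $(\prod_{k=i}^{j-1}\tau_k)\cdot\tau_j\cdot(\prod_{k=i}^{j-1}\tau_k)'$ where the second block is the reversed product, and then telescope from the middle outward using repeatedly $\tau_{k-1}\tau_k\tau_{k-1} = f_{k-1}f_k\tau_{k-1}$, collapsing the palindrome one layer at a time; each layer contributes one factor $f_kf_{k-1}$, and the middle $\tau_j\tau_{j-1}\tau_j$-type collapses give a net product $f_if_j$ (all intermediate signs appear squared). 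Alternatively one can read this off directly from Proposition~\ref{tau-i1-ik}, since $\tau_{(i,j,i)}$ is a palindromic product starting and ending with $\tau_i$ with no repeats, so it equals $\prod_{t}f_{i_t}\cdot E_{i,i}\cdot\tau_i = (\text{product of signs})\cdot\tau_i$, and the signs are $f_{i+1},\dots,f_{j-1}$ (twice, hence squared away), $f_i$ (once, from the left block) and $f_j$ (once, from the turn at the middle), giving $f_if_j$.

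The key observation that makes the last two bullets fall out of Proposition~\ref{r-dickson} is that the element $\theta := \tau_{i+1,j,i+1}$ (the double-sum notation of Definition~\ref{tau-i,j,i}) satisfies, with respect to $\tau_i$, the very same algebraic relations that a neighbouring $\tau_j'$ with sign $f_j$ would satisfy against a $\tau_i$ with sign $f_i$. Concretely, I would establish: $\theta^2 = -2\theta$; $(1+\tau_i)\theta = \theta(1+\tau_i)$ is not needed, but rather $(1+\theta)\theta = \theta(1+\theta) = -\theta$; $\tau_i\theta\tau_i = f_if_j\tau_i$; and $\theta\tau_i\theta = f_if_j\theta$. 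The first of these follows since $\theta = \pi_{s_{i+1,j,i+1}} - 1$ and $s_{i+1,j,i+1}$ is an involution, so $(1+\theta)^2 = 1$, i.e. $\theta^2 = -2\theta$; the second is then automatic. For $\tau_i\theta\tau_i$: $\theta$ is, by Proposition~\ref{tau-i1-ik} applied to each summand $\tau_{(k_1,j,k_2)}$, a sum of scalar multiples of $E_{k_1,k_2}\tau_{k_2}$ with $k_1,k_2\in\{i+1,\dots,j\}$; multiplying on both sides by $\tau_i$ and using that $\tau_i\tau_k = E_{i,k}f_i\tau_k$ only when $m_{i,k}=3$, i.e.\ only $k=i+1$ survives, and likewise $\tau_k\tau_i$ survives only for $k=i+1$, all of $\theta$ except the $k_1=k_2=i+1$ term is annihilated, and that surviving term is $\tau_{(i+1,j,i+1)} = f_{i+1}f_j\tau_{i+1}$ by the second bullet; then $\tau_i\tau_{i+1}\tau_i = f_if_{i+1}\tau_i$, and the $f_{i+1}$'s cancel, leaving $\tau_i\theta\tau_i = f_if_j\tau_i$. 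The computation of $\theta\tau_i\theta = f_if_j\theta$ is symmetric. I expect this reduction step — verifying that $\theta$ obeys exactly the $\tau$-calculus of Proposition~\ref{tau-property}(iii) with the pair $(f_i,f_j)$ — to be the main obstacle, mostly bookkeeping of which of the $(j-i)^2$ summands in $\theta$ survive multiplication by $\tau_i$.

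Granting that, the last two bullets are obtained by rerunning the induction of Proposition~\ref{r-dickson} verbatim with $\tau_j$ replaced by $\theta$. The only place the proof of Proposition~\ref{r-dickson} uses $\tau_j$ is through the identities $(1+\tau_j)\tau_j = \tau_j(1+\tau_j) = -\tau_j$, $\tau_i\tau_j\tau_i = f_if_j\tau_i$, $\tau_j\tau_i\tau_j = f_if_j\tau_j$, the decomposition of $[(1+\tau_i)(1+\tau_j)]^{r_0}$ into $1 + \rho_{i,i}\tau_i + \rho_{i,j}\tau_i\tau_j + \rho_{j,i}\tau_j\tau_i + \rho_{i,i}\tau_j$ with a symmetry forcing the two diagonal coefficients equal, and the relation $E_{i,j}(f_i\tau_j - f_j\tau_i)$ linking the ``odd'' and ``even'' cases. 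Each of these has a $\theta$-analogue: the first two are the properties of $\theta$ just listed; the span-and-symmetry statement holds because $1,\tau_i,\tau_i\theta,\theta\tau_i,\theta$ is closed under multiplication by $\tau_i$ and $\theta$ (using $\tau_i^2=-2\tau_i$, $\theta^2=-2\theta$, $\tau_i\theta\tau_i = f_if_j\tau_i$, $\theta\tau_i\theta = f_if_j\theta$), and the swap $i\leftrightarrow j$ symmetry of the word $s_{i+1,j,i+1}$ about its centre again gives $\rho_{i,i}=\rho_{j,j}$; and in place of $E_{i,j}(f_i\tau_j-f_j\tau_i)$ one simply writes $\tau_i\theta - \theta\tau_i$, which is legitimate because in Proposition~\ref{tau-simply-laced} the right-hand sides of \eqref{asec} and \eqref{asoc} are already phrased in terms of $\tau_i\cdot\tau_{i+1,j,i+1} - \tau_{i+1,j,i+1}\cdot\tau_i$ and $\tau_i - \tau_{i+1,j,i+1}$ rather than in the $E_{i,j}$ form. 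Thus the induction on $r$ goes through line by line, with Dickson's recurrence $\mathbb{E}_{2r_0+1} = \mathbb{E}_{2r_0} - (f_if_j)\mathbb{E}_{2r_0-1}$ (here $x=1$, $\alpha=f_if_j$) doing the same arithmetic as before, and we conclude that \eqref{asec} and \eqref{asoc} hold for all $r$.
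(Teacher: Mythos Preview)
Your plan is correct and matches the paper's approach: both reduce parts three and four to Proposition~\ref{r-dickson} by verifying that $\theta=\tau_{i+1,j,i+1}$ satisfies the same quadratic and triple-product identities against $\tau_i$ (namely $\theta^2=-2\theta$, $\tau_i\theta\tau_i=f_if_j\tau_i$, $\theta\tau_i\theta=f_if_j\theta$) that a genuine neighbouring $\tau$ would, after which the Dickson-polynomial induction runs verbatim; the paper does exactly this, phrasing it as ``replacing $\tau_{(i+1,j,i+1)}$ instead of $\tau_j$'' once it has observed that only the $k_1=k_2=i+1$ summand of $\theta$ survives multiplication by $\tau_i$ on either side. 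One small correction to your write-up of the first bullet: the matrix identity $E_{i,j-1}E_{j-1,j}=E_{i,j}$ is false as stated (the left side is a $3$-cycle on $\{i,j-1,j\}$, not a transposition), but you only need $E_{i,j-1}E_{j-1,j}\tau_j=E_{i,j}\tau_j$, which does hold because $\tau_j$ has its sole nonzero row in position $j$.
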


\begin{proof}
The proof of the first part of the proposition  comes directly from Proposition \ref{tau-i1-ik}.
Hence, we turn to the proof of the second part of the proposition. 
$$\tau_{\left(i,~j,~i\right)}=f_i\cdot \prod_{k=i+1}^{j-1}f_k\cdot f_j\cdot \prod_{k=i+1}^{j-1}f_k\cdot \prod_{k=i}^{j-1}(E_{k,~k+1})^2\cdot \tau_i$$

Notice, $\left(E_{k,k+1}\right)^2=1$, and  $f_k\in \{+1, -1\}$ for every $i\leq k\leq j$. Therefore, $(f_k)^2=1$. Hence, we get $$\tau_{\left(i,~j,~i\right)}=f_i\cdot f_j\cdot \tau_i$$
\\

Now, we turn to the third and the forth part of the proposition.
First, notice, by Definition \ref{tau-i,j,i}, 
\begin{equation}\label{tau-i+1-j-i+1}
    \tau_{i+1, ~j,~i+1}:=
   \sum_{k_{1}=i+1}^{j} \sum_{k_{2}=i+1}^{j} \tau_{\left(k_{1}, ~j, ~k_{2}\right)}.
\end{equation}

Hence,
\begin{equation}\label{asenew}
\begin{aligned}
&\left[\left(1+\tau_{i}\right)\left(1+\tau_{i+1,~j,~i+1}\right)\right]^{r}-\left[\left(1+\tau_{i+1,~j,~i+1}\right)\left(1+\tau_{i}\right)\right]^{r}\\ &=\left[\left(1+\tau_{i}\right)\left(1+\sum_{k_{1}=i+1}^{j} \sum_{k_{2}=i+1}^{j} \tau_{\left(k_{1}, ~j, ~k_{2}\right)}\right)\right]^{r}-\left[\left(1+\sum_{k_{1}=i+1}^{j} \sum_{k_{2}=i+1}^{j} \tau_{\left(k_{1}, ~j, ~k_{2}\right)}\right)\left(1+\tau_{i}\right)\right]^{r}
\end{aligned}
\end{equation}

and

\begin{equation}\label{asonew}
\begin{aligned}
&\left[\left(1+\tau_{i}\right)\left(1+\tau_{i+1,~j,~i+1}\right)\right]^{r}\left(1+\tau_{i}\right)-\left[\left(1+\tau_{i+1,~j,~i+1}\right)\left(1+\tau_{i}\right)\right]^{r}\left(1+\tau_{i+1,~j,~i+1}\right)
\\ &=\left[\left(1+\tau_{i}\right)\left(1+\sum_{k_{1}=i+1}^{j} \sum_{k_{2}=i+1}^{j} \tau_{\left(k_{1}, ~j, ~k_{2}\right)}\right)\right]^{r}\left(1+\tau_{i}\right) \\&-\left[\left(1+\sum_{k_{1}=i+1}^{j} \sum_{k_{2}=i+1}^{j} \tau_{\left(k_{1}, ~j, ~k_{2}\right)}\right)\left(1+\tau_{i}\right)\right]^{r}\left(1+\sum_{k_{1}=i+1}^{j} \sum_{k_{2}=i+1}^{j} \tau_{\left(k_{1}, ~j, ~k_{2}\right)}\right)
\end{aligned}
\end{equation}

Now, notice that,

$\left[\left(1+\tau_{i}\right)\left(1+\sum_{k_{1}=i+1}^{j} \sum_{k_{2}=i+1}^{j} \tau_{\left(k_{1}, ~j, ~k_{2}\right)}\right)\right]^{r}$, ~$\left[\left(1+\sum_{k_{1}=i+1}^{j} \sum_{k_{2}=i+1}^{j} \tau_{\left(k_{1}, ~j, ~k_{2}\right)}\right)\left(1+\tau_{i}\right)\right]^{r}$, \\ \\ $\left[\left(1+\tau_{i}\right)\left(1+\sum_{k_{1}=i+1}^{j} \sum_{k_{2}=i+1}^{j} \tau_{\left(k_{1}, ~j, ~k_{2}\right)}\right)\right]^{r}\left(1+\tau_{i}\right)$, ~and \\ \\ $\left[\left(1+\sum_{k_{1}=i+1}^{j} \sum_{k_{2}=i+1}^{j} \tau_{\left(k_{1}, ~j, ~k_{2}\right)}\right)\left(1+\tau_{i}\right)\right]^{r}\left(1+\sum_{k_{1}=i+1}^{j} \sum_{k_{2}=i+1}^{j} \tau_{\left(k_{1}, ~j, ~k_{2}\right)}\right)$ \\ \\ are sum of elements of the form:
\begin{itemize}
    \item $(\tau_i\cdot \sum_{k_{1}=i+1}^{j} \sum_{k_{2}=i+1}^{j} \tau_{\left(k_{1}, ~j, ~k_{2}\right)})^k$;
    \item $(\tau_i\cdot \sum_{k_{1}=i+1}^{j} \sum_{k_{2}=i+1}^{j} \tau_{\left(k_{1}, ~j, ~k_{2}\right)})^k\cdot \tau_i$;
    \item $(\sum_{k_{1}=i+1}^{j} \sum_{k_{2}=i+1}^{j} \tau_{\left(k_{1}, ~j, ~k_{2}\right)}\cdot \tau_i)^k$;
    \item $(\sum_{k_{1}=i+1}^{j} \sum_{k_{2}=i+1}^{j} \tau_{\left(k_{1}, ~j, ~k_{2}\right)}\cdot \tau_i)^k\cdot \sum_{k_{1}=i+1}^{j} \sum_{k_{2}=i+1}^{j} \tau_{\left(k_{1}, ~j, ~k_{2}\right)}$.
\end{itemize}
   for $0\leq k\leq r$.
Now, by Proposition \ref{tau-property}, $\tau_i\tau_{i'}=0$ for $i'\notin\{i-1, i, i+1\}$, the following holds:
\begin{itemize}
    \item $(\tau_i\cdot \sum_{k_{1}=i+1}^{j} \sum_{k_{2}=i+1}^{j} \tau_{\left(k_{1}, ~j, ~k_{2}\right)})^k=(\tau_i\cdot \tau_{\left(i+1, ~j, ~i+1\right)})^{k-1}\cdot\tau_i\cdot \sum_{k_{1}=i+1}^{j} \sum_{k_{2}=i+1}^{j} \tau_{\left(k_{1}, ~j, ~k_{2}\right)}$;
    \item $(\tau_i\cdot \sum_{k_{1}=i+1}^{j} \sum_{k_{2}=i+1}^{j} \tau_{\left(k_{1}, ~j, ~k_{2}\right)})^k\cdot \tau_i=(\tau_i\cdot \tau_{\left(i+1, ~j, ~i+1\right)})^k\cdot\tau_i$;
    \item $(\sum_{k_{1}=i+1}^{j} \sum_{k_{2}=i+1}^{j} \tau_{\left(k_{1}, ~j, ~k_{2}\right)}\cdot \tau_i)^k=\sum_{k_{1}=i+1}^{j} \sum_{k_{2}=i+1}^{j} \tau_{\left(k_{1}, ~j, ~k_{2}\right)}\cdot(\tau_i\cdot \tau_{\left(i+1, ~j, ~i+1\right)})^{k-1}\cdot\tau_i $;
    \item $(\sum_{k_{1}=i+1}^{j} \sum_{k_{2}=i+1}^{j} \tau_{\left(k_{1}, ~j, ~k_{2}\right)}\cdot \tau_i)^k\cdot \sum_{k_{1}=i+1}^{j} \sum_{k_{2}=i+1}^{j} \tau_{\left(k_{1}, ~j, ~k_{2}\right)}=\sum_{k_{1}=i+1}^{j} \sum_{k_{2}=i+1}^{j} \tau_{\left(k_{1}, ~j, ~k_{2}\right)}\cdot(\tau_i\cdot \tau_{\left(i+1, ~j, ~i+1\right)})^{k-1}\cdot\tau_i\cdot \sum_{k_{1}=i+1}^{j} \sum_{k_{2}=i+1}^{j} \tau_{\left(k_{1}, ~j, ~k_{2}\right)} $.
\end{itemize}

Now, by Definition \ref{tau-i,j,i}, $\tau_{i}\cdot \tau_{\left(i+1, ~j, ~i+1\right)}\cdot \tau_{i}=\tau_ {\left(i, ~j, ~i\right)}$, where by the second part of the proposition $\tau_ {\left(i, ~j, ~i\right)}=f_i\cdot f_j\cdot \tau_{i}$ . 
Therefore, by  replacing $\tau_{\left(i+1, ~j, ~i+1\right)} $ instead of $\tau_j$ on Proposition  \ref{r-dickson}, and applying the same process of  Proposition  \ref{r-dickson}, we get the desired results of the proposition.

\end{proof}

\begin{theorem}\label{line-3-relation}
Let ~$\Gamma$ be the mixed-sign Coxeter graph which is a line with $n$ vertices $s_1, s_2, \ldots, s_n$ which are signed by $f_1, f_2, \ldots, f_n$ respectively, such that for $1\leq i\leq n-1$ the vertex $s_i$ is connected to $s_{i+1}$ by a simply-laced edge, as it is described in Fig 1. Then, for $1\leq i<j\leq n$, apart from the Coxeter relations:

\begin{align}\label{3com}
 (s_i\cdot s_j)^2=1 \ \Leftrightarrow j-i\geq 2.   
\end{align}

\begin{align}\label{3cox}
(s_i\cdot s_{i+1})^3=1 \ \Leftrightarrow f_i=f_{i+1}.    
\end{align}

which hold by Proposition \ref{2-+}, the following generalized Coxeter relations hold in $W(\Gamma)$:
\begin{align} \label{2}
(s_i\cdot s_{i+1, ~j, ~i+1})^3=1\quad \text{if and only if}\quad f_i=f_j \end{align}
\end{theorem}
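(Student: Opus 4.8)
The plan is to realize the relation $(s_i \cdot s_{i+1,j,i+1})^3 = 1$ inside the mixed-sign geometric representation by computing the matrix $\left[\pi_{s_i}\,\pi_{s_{i+1,j,i+1}}\right]^3 = \left[(1+\tau_i)(1+\tau_{i+1,j,i+1})\right]^3$ and showing it equals the identity exactly when $f_i = f_j$; and, conversely, when $f_i \neq f_j$, showing this product has infinite order. First I would note that by Proposition~\ref{tau-middle}, $\pi_{s_{i+1,j,i+1}} = 1 + \tau_{i+1,j,i+1}$, so $s_i$ and $s_{i+1,j,i+1}$ behave, at the level of the representation, like a pair of ``generalized reflections'' whose product is governed by Proposition~\ref{tau-simply-laced}. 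In particular, the key identity $\tau_i \cdot \tau_{(i+1,j,i+1)} \cdot \tau_i = \tau_{(i,j,i)} = f_i f_j \, \tau_i$ (second and third bullets of Proposition~\ref{tau-simply-laced}) is what makes $\tau_{i+1,j,i+1}$ play the same role relative to $\tau_i$ that $\tau_j$ plays relative to $\tau_i$ when $m_{i,j}=3$ — with the coefficient $f_i f_j$ replacing the single-edge coefficient.

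Next I would apply Proposition~\ref{tau-simply-laced}, Equations~\eqref{asec} and~\eqref{asoc}, with $r$ chosen so that $2r = 2$ or $2r-1 = \cdots$, i.e.\ essentially $r$ matching the exponent $3$. Concretely, I want to show $\left[(1+\tau_i)(1+\tau_{i+1,j,i+1})\right]^3 = 1$. Write $A = (1+\tau_i)(1+\tau_{i+1,j,i+1})$ and $B = (1+\tau_{i+1,j,i+1})(1+\tau_i)$; these are inverse matrices (each $1+\tau_k$ is $\pi_{s_k}$, an involution, and similarly $1+\tau_{i+1,j,i+1}$ is the involution $\pi_{s_{i+1,j,i+1}}$), so $A^3 = 1 \iff A^3 = B^{-3} \iff A^3 - B^{-3}$ relates to $A^3 B^3$... more cleanly: $A^3 = 1$ iff $A^3$ and $B^3$ are both the identity iff $A^3 - 1 = 0$. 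I would instead use that $A^3 = 1$ is equivalent to $A^3 = A^{-3} = B^3$ together with $A^3$ having finite order; the cleanest route is: $|s_i s_{i+1,j,i+1}| = m$ iff $A^m = 1$, and since $A$ and $B$ are conjugate-like (in fact $B = (1+\tau_i) A (1+\tau_i)$), $A^m - 1 = 0 \iff A^m - B^m = 0$ AND $A^m(1+\tau_i) - B^m(1+\tau_{i+1,j,i+1})$ vanishes appropriately. So I would evaluate the right-hand sides of \eqref{asec} and \eqref{asoc} using the Dickson polynomial values: $\mathbb{E}_{2r-1}(1, f_i f_j)$ and $\mathbb{E}_{2r}(1, f_i f_j)$. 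When $f_i f_j = 1$, the Dickson recurrence $\mathbb{E}_n(1,1) = \mathbb{E}_{n-1}(1,1) - \mathbb{E}_{n-2}(1,1)$ gives the periodic sequence $1, 1, 0, -1, -1, 0, \dots$, so $\mathbb{E}_5(1,1) = 0$ and $\mathbb{E}_6(1,1) = 1$ — wait, I need the exponent $3$, so $r$ such that the $6$-th and $5$-th powers appear; since $(s_i \cdot w)^3 = 1$ means the sixth power of the product of the two involutions... actually $(s_i w)^3$ has ``length'' $6$ as an alternating word, so I would track $\mathbb{E}_5(1, f_i f_j)$ and check it vanishes iff $f_i f_j = 1$. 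Indeed $\mathbb{E}_5(1,1) = 0$ while $\mathbb{E}_5(1,-1) \neq 0$ (the sequence for $\alpha = -1$ is $1,1,2,3,5,8,\dots$, strictly positive), which is exactly the dichotomy $f_i = f_j$ versus $f_i \neq f_j$.

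Concretely, then: for the forward direction ($f_i = f_j \Rightarrow$ relation holds), I would show $A^3 - B^3 = 0$ and $A^3(1+\tau_i) - B^3(1+\tau_{i+1,j,i+1}) = 0$ by plugging the vanishing Dickson values into \eqref{asec} and \eqref{asoc} with the appropriate $r$; combined with the fact that $A$ and $B$ are related by conjugation and are mutually inverse, this forces $A^3 = 1$, i.e.\ $\pi$ sends $(s_i s_{i+1,j,i+1})^3$ to the identity matrix, and since $\pi$ is (to be argued) faithful on this subgroup — or more safely, since any additional relation in $W(\Gamma)$ is detected in the representation and we only need the relation to \emph{hold}, it suffices that the image is trivial, giving $(s_i s_{i+1,j,i+1})^3 = 1$ as a consequence in $W(\Gamma)$. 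Hmm — one must be careful: $\pi$ being faithful is part of the definition only for honest Coxeter groups, so for the ``only if'' and to assert the relation genuinely holds in $W(\Gamma)$ I would invoke that $W(\Gamma)$ is \emph{defined} via $\pi$ (Definition~\ref{mixed-geo-rep}), hence relations among the $s_i$ are precisely relations among the matrices $\pi_{s_i}$ up to the kernel; since Armstrong's result (cited) and Proposition~\ref{2-+} pin down the Coxeter-type relations, the only new relations are the matrix identities we verify. For the converse ($f_i \neq f_j \Rightarrow$ relation fails), I would show $A$ has infinite order: from \eqref{asoc} with $f_i f_j = -1$, $A^r(1+\tau_i) - B^r(1+\tau_{i+1,j,i+1}) = (-1)^r \mathbb{E}_{2r}(1,-1)(\tau_i - \tau_{i+1,j,i+1})$, and since $\mathbb{E}_{2r}(1,-1) \to \infty$ this difference never returns to a value consistent with $A^r = 1$; hence no power of $A$ is the identity. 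The main obstacle I anticipate is the bookkeeping needed to pass cleanly from the ``$A^r - B^r$'' and ``$A^r(1+\tau_i) - B^r(1+\tau_{i+1,j,i+1})$'' identities of Proposition~\ref{tau-simply-laced} to the statement ``$A^3 = 1$'' (respectively ``$A$ has infinite order''): one needs to know that these two families of differences, together with $A$ and $B$ being inverse and conjugate, actually determine whether $A^3 = 1$, and to handle the interplay between $\tau_i - \tau_{i+1,j,i+1}$ being nonzero (which requires checking $\nu_i$ and the relevant row of $\tau_{i+1,j,i+1}$ are genuinely distinct — true because $\tau_{i+1,j,i+1}$ has its nonzero row in a different position, namely rows indexed by $k_1 \in \{i+1,\dots,j\}$, never $i$). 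I would dispatch that by an explicit low-dimensional matrix computation on the $2$- or $3$-dimensional invariant subspace spanned by $e_i$ together with the relevant $e_{k}$'s, where $A$ restricts to an honest $2\times 2$ block that is either a rotation by $2\pi/3$ (when $f_i f_j = 1$) or a parabolic/hyperbolic element (when $f_i f_j = -1$), making the order computation transparent.
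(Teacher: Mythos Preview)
Your overall strategy matches the paper's exactly: translate the relation into the representation via Proposition~\ref{tau-middle}, apply Proposition~\ref{tau-simply-laced}, and read off a Dickson-polynomial condition. However, your execution stumbles at precisely the point you flag as your ``main obstacle,'' and the paper resolves it with one elementary observation you are missing: for involutions $a,b$, the relation $(ab)^r = 1$ is \emph{equivalent} to the single alternating-word identity of length $r$, namely $(ab)^{(r-1)/2}a = (ba)^{(r-1)/2}b$ for odd $r$ and $(ab)^{r/2} = (ba)^{r/2}$ for even $r$ (rewrite $(ab)^r = 1$ as $(ab)^{\lceil r/2\rceil} = (ba)^{\lfloor r/2 \rfloor}$ using $(ab)^{-1} = ba$, then peel off one letter in the odd case). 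The left-hand side of \eqref{asoc} or \eqref{asec} with exponent $\lfloor r/2\rfloor$ is exactly this difference, so one gets directly that $(s_i\, s_{i+1,j,i+1})^r = 1$ holds iff $\mathbb{E}_{r-1}(1,f_if_j) = 0$. In particular the relevant index for $r=3$ is $\mathbb{E}_2(1,f_if_j) = 1 - f_if_j$, not $\mathbb{E}_5$; your $\mathbb{E}_5$ computation would only detect $A^3 = B^3$, i.e.\ $A^6 = 1$, which is weaker. Note also that your proposed two-condition criterion ``$A^m - B^m = 0$ and $A^m a - B^m b = 0$'' cannot work as stated: when $A^m = B^m = 1$ the second expression equals $a - b = \tau_i - \tau_{i+1,j,i+1} \neq 0$, so those conditions are never simultaneously met.

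Two smaller points. For $f_i \neq f_j$ the sequence $\mathbb{E}_n(1,-1)$ is the Fibonacci sequence and never vanishes, so no $r$ works and the order is infinite --- no fallback $2\times 2$ block computation is needed. And your worry about faithfulness is moot: by Definition~\ref{mixed-geo-rep}, $W(\Gamma)$ is the group generated by the matrices $\pi_{s_i}$, so matrix identities \emph{are} the group relations.
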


\begin{proof}


By Definition \ref{tau}, $\pi_{s_i}=1+\tau_i$ and by Proposition \ref{tau-simply-laced}, $\pi_{s_{i+1, j, i+1}}=1+\tau_{i+1, j, i+1}$.
\\

Hence,
\begin{equation}\label{s-i-r}
  (s_i\cdot s_{i+1, ~j, ~i+1})^r=1
\end{equation}

is equivalent to 

\begin{equation}\label{tau-i-r}
  \left[\left(1+\tau_{i}\right)\left(1+\tau_{i+1,~j,~i+1}\right)\right]^{r}=1 
\end{equation}

Since, both $(1+\tau_i)$ and $(1+\tau_{i+1,~j,~i+1})$ are involutions,    Equation \eqref{tau-i-r} is equivalent to:

\begin{itemize}
    \item In case of an even $r$:
    \begin{equation}\label{tau-i-r2}
        \left[\left(1+\tau_{i}\right)\left(1+\tau_{i+1,~j,~i+1}\right)\right]^{\frac{r}{2}}-\left[\left(1+\tau_{i+1,~j,~i+1}\right)\left(1+\tau_{i}\right)\right]^{\frac{r}{2}};
    \end{equation}
    \item In case of an odd $r$:
    \begin{equation}\label{tau-i-r3}
        \left[\left(1+\tau_{i}\right)\left(1+\tau_{i+1,~j,~i+1}\right)\right]^{\frac{r-1}{2}}\left(1+\tau_{i}\right)-\left[\left(1+\tau_{i+1,~j,~i+1}\right)\left(1+\tau_{i}\right)\right]^{\frac{r-1}{2}}\left(1+\tau_{i+1,~j,~i+1}\right).
    \end{equation}
\end{itemize}
Now, by Proposition \ref{tau-simply-laced} the following holds:
\begin{itemize}
\item In case of an even $r$:
    \begin{equation}\label{asec1}
\begin{aligned} 
&\left[\left(1+\tau_{i}\right)\left(1+\tau_{i+1,~j,~i+1}\right)\right]^{\frac{r}{2}}-\left[\left(1+\tau_{i+1,~j,~i+1}\right)\left(1+\tau_{i}\right)\right]^{\frac{r}{2}}\\ &=\left(f_{i} f_{j}\right)^{\frac{r}{2}-1} \cdot \left\{\mathbb{E}_{r-1}{\left(1~,~ f_{i} f_{j}\right)}\right\} \cdot \left(\tau_{i}\cdot \tau_{i+1,~j,~i+1}-\tau_{i+1,~j,~i+1}\cdot \tau_{i}\right).
\end{aligned}
 \end{equation}
 \item  In case of an odd $r$:
\begin{equation}\label{asoc1}
\begin{aligned} 
&[(1+\tau_{i})(1+\tau_{i+1,~j,~i+1})]^{\frac{r-1}{2}}(1+\tau_{i}) 
-[(1+\tau_{i+1,~j,~i+1})(1+\tau_{i})]^{\frac{r-1}{2}}(1+\tau_{i+1,~j,~i+1})
\\ &=\left(f_{i} f_{j}\right)^{\frac{r-1}{2}} \cdot \left\{\mathbb{E}_{r-1}{\left(1~, ~f_{i} f_{j}\right)}\right\} \cdot\left(\tau_{i}-\tau_{i+1,~j,~i+1}\right).
\end{aligned}
\end{equation}
\end{itemize}

Notice, by Proposition \ref{sum-tau-i-beginning},  $\tau_{i}\cdot \tau_{i+1,~j,~i+1}-\tau_{i+1,~j,~i+1}\cdot \tau_{i}\neq 0$ and $\tau_{i}-\tau_{i+1,~j,~i+1}\neq 0$.
Therefore, Equation \eqref{s-i-r} holds if and only if
\begin{equation}\label{dickson-zero}
 \mathbb{E}_{r-1}{\left(1~,~ f_{i} f_{j}\right)}=0.   
\end{equation}

Since, $f_i, f_j$ is either $+1$ or $-1$,  Equation \eqref{dickson-zero} holds if and only if

$$r=3\quad\quad \text{and} ~~f_i=f_j.$$

\end{proof}

\begin{remark}\label{only-relation-line}
 Notice, in the proof of Theorem \ref{line-3-relation}, apart from the usual Coxeter relations $\left(s_{i}s_{j}\right)^r=1$, where $r\in\{2,3\}$ in case of a simply-laced mixed-sign Coxeter group and \\$1\leq i, j\leq n$ (notice, in case of $r=3$ the signs of the vertices $s_i$ and $s_j$ satisfy $f_i=f_j$),  we consider only relations of the form $\left(s_{i}s_{i+1,~j,~i+1}\right)^r=1$ (where in the case of mixed-sign Coxeter group with an associated mixed-sign Coxeter graph which is a simply-laced line, $r$ equals to $3$). By a similar way as it has shown that every relation in a Coxeter group is derived from a relation of a form $\left(s_{i}s_{j}\right)^r=1$ for some $r$ \cite{BOOK}, it can be shown that any other type of relation (which can be found by the same method of the proof of Theorem \ref{line-3-relation}) is derived from the mentioned relations in Theorem \ref{line-3-relation}.
\end{remark}

\begin{example}
Consider the following mixed-sign Coxeter graph $\Gamma$:
\begin{center}
	\includegraphics[width=.40\textwidth]{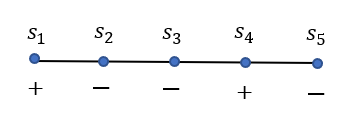}
	\end{center}
	
Then
\begin{equation}\label{example-simply-laced-line}
    \begin{aligned}
    W(\Gamma)=\langle s_1, s_2, s_3, s_4, s_5 ~|~ &s_i^2=1 ~ \text{for} ~ 1\leq i\leq 5, ~(s_2s_3)^3=(s_1s_2s_3s_4s_3s_2)^3=(s_3s_4s_5s_4)^3=1, \\ 
    & (s_is_j)^2=1 ~ \text{for} ~ 1\leq i<j\leq 5 ~ \text{and} ~j-i\geq 2\rangle.
    \end{aligned}
\end{equation}

As it is mentioned in Remark \ref{only-relation-line}, there are other relations in $W(\Gamma)$ which are derived from the described relations 
in Equation \eqref{example-simply-laced-line}. For example, it can be shown that the following relation holds:  

\begin{equation}\label{derived-relation}
 \left(\left[s_5 s_4 s_3 s_4 s_5\right]\cdot \left[s_4 s_3 s_2 s_3 s_4\right]\right)^2=1.   
\end{equation}

Notice,

\begin{equation}\label{3-4-5-4}
 \left(s_3s_4s_5s_4\right)^3=1,    
\end{equation}
which appears in the presentation of $W(\Gamma)$. By conjugating Equation \eqref{3-4-5-4} by $s_4$, the equation is equivalent to:

\begin{equation}\label{3-4-5-power3}
  \left(s_4s_3s_4s_5\right)^3=1,    
\end{equation}
 Equation \eqref{3-4-5-power3} is equivalent to 

\begin{equation}\label{5-4-3-4-5}
    s_5\left(s_4 s_3 s_4\right) s_5 = \left(s_4 s_3 s_4\right) s_5 \left(s_4 s_3 s_4\right).
\end{equation}

Notice, $\left(s_2 s_4\right)^2$ implies $s_2s_4 = s_4 s_2$, which implies $s_4 s_2 s_4= s_2$. Hence,  we have

\begin{equation}\label{2-4-3-4-2}
  s_4 s_3 s_2 s_3 s_4 =   \left(s_4 s_3 s_4\right) s_2 \left(s_4 s_3 s_4\right).
\end{equation}

Hence, by applying the results of Equations \eqref{5-4-3-4-5} and \eqref{2-4-3-4-2} in Equation \eqref{derived-relation}, one obtain

\begin{equation}\label{5-2-commute}
   \left(\left[ \left(s_4 s_3 s_4\right) s_5 \left(s_4 s_3 s_4\right)\right]\left[\left(s_4 s_3 s_4\right) s_2 \left(s_4 s_3 s_4\right)\right] \right)^2=1, 
\end{equation}
where by conjugating  Equation \eqref{5-2-commute} by $s_4 s_3 s_4$, the equation is equivalent to 
\begin{equation}
    \left(s_5 s_2\right)^2=1.
\end{equation}

\end{example}

\subsection{Mixed-sign Coxeter graph is a simply-laced simple cycle}\label{simply laced}

In this subsection we we give a description of mixed-sign Coxeter groups in terms of generators and relations, where  considering mixed-sign Coxeter groups $W(\Gamma)$, such that  $\Gamma$ is a simply-laced simple cycle with vertices $s_1, s_2, \ldots, s_n$, which are signed by $f_1, f_2, \ldots, f_n$ respectively, where the vertices $s_i$ and $s_{i+1}$ are connected by a simply-laced edge for $1\leq i\leq n-1$, and additionally, the vertices $s_{n}$ and $s_1$ are connected also by a simply-laced edge.

\begin{center}
	\includegraphics[width=.27\textwidth]{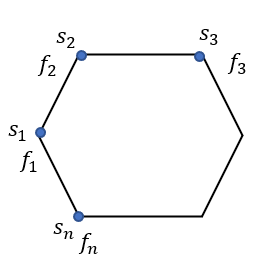}
\end{center}
	\begin{center}
	
{\scriptsize { \normalsize $Fig.\ 2.$}}
 	    \end{center}

\begin{remark}\label{n+1=1}
For a convenience, in this subsection  we consider $n+1$ to be equal to $1$ and $1-1$ to be equal to $n$ (i.e., cyclic addition modulo $n$),  concerning indices of vertices  of a simple cycle which are connected by an edge. Then, we can say, vertices $s_i$ and $s_{i+1}$ are connected by an edge for $1\leq i\leq n$ (without considering $s_n$ and $s_1$ as a special case).
\end{remark}

\begin{proposition}\label{independent-simply-cycle}
Let $\Gamma$ be a mixed-sign Coxeter graph, a simple cycle with $n$ vertices, $s_1, s_2, \ldots, s_n$ which are signed by $f_1, f_2, \ldots, f_n$ respectively, such that  $s_i$ and $s_{i+1}$ are connected by a simply-laced edge for $1\leq i\leq n$, as it shown in Fig. 2. Let $\nu_{i}$ be a vector in $\mathbb{R}^n$ which we get by considering the $i$-th row of $\tau_i$ as a vector for every $1\leq i\leq n$ (as it is defined in Definition \ref{nu}). Then, the associated bilinear form is non-degenerate, which is equivalent to:
\\

The set of vectors 
$$\{\nu_{1}, \nu_{2}, \ldots, \nu_{n}\}$$
is a linearly independent set of vectors in $\mathbb{R}^n$, unless one of the following conditions holds
\begin{itemize}
    \item $f_i=+1$ for every $1\leq i\leq n$;
    \item $f_i=-1$ for every $1\leq i\leq n$ and $n$ is even.
\end{itemize}
\end{proposition}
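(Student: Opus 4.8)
The plan is to compute the associated matrix $B$ explicitly and decide when it is singular. For the simply-laced simple cycle with vertices $s_1,\dots,s_n$, the matrix $B=(B(e_i,e_j))$ has diagonal entries $B(e_i,e_i)=2f_i$, off-diagonal entries $B(e_i,e_{i+1})=B(e_{i+1},e_i)=-2\cos(\pi/3)=-1$ for the $n$ cyclically-adjacent pairs (again with the convention of Remark \ref{n+1=1}), and $0$ otherwise. The vectors $\nu_i$ of Definition \ref{nu} are precisely the rows of this matrix up to the uniform nonzero scalar: the $i$-th row of $\tau_i$ has entries $(\tau_i)_{i,i}=-2$ and $(\tau_i)_{i,j}=f_i$ for $m_{i,j}=3$, i.e.\ $\nu_i = -\tfrac{1}{f_i}\,(\text{$i$-th row of }B)$, so $\{\nu_1,\dots,\nu_n\}$ is linearly independent if and only if $\det B\neq 0$, which is the stated equivalence with non-degeneracy. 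So everything reduces to computing $\det B$ as a function of $(f_1,\dots,f_n)$.

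Next I would factor out the signs. Writing $D=\mathrm{diag}(f_1,\dots,f_n)$, we have $B = D\,C$ where $C$ is symmetric with $C_{i,i}=2$ and $C_{i,i+1}=C_{i+1,i}=-f_if_{i+1}$ cyclically; hence $\det B = \left(\prod_k f_k\right)\det C$. One can go further: conjugating $C$ by a suitable diagonal sign matrix absorbs the edge signs, showing that $\det C$ depends only on the product $\epsilon:=\prod_{k=1}^n f_k$ around the cycle. Concretely, $C$ is similar (via a diagonal $\pm1$ matrix) to the "reduced" matrix with all edge weights $-1$ except possibly one edge carrying weight $-\epsilon$, i.e.\ to the bilinear-form matrix of the circular graph $\widetilde A_{n-1}$ when $\epsilon=+1$ and a twisted version when $\epsilon=-1$. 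Then I evaluate the resulting circulant-like determinant: expanding along the last row (or using the standard transfer-matrix/Chebyshev recursion for tridiagonal matrices with two corner entries) gives
\begin{equation*}
\det C = 2\,p_{n-1}(1) - 2\,p_{n-2}(1) - 2\cdot(\pm 1) - (\text{correction}),
\end{equation*}
where $p_m$ are the Dickson/Chebyshev-type polynomials already appearing in the paper (Proposition \ref{r-dickson}); one finds that $\det C = 0$ exactly when $\epsilon = -1$ and $n$ is even, and $\det C = 0$ is avoided when $\epsilon=+1$, and also $\det B$ can vanish when $\prod f_k = 0$ — which never happens since $f_k\in\{\pm1\}$. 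Combining $\det B=\epsilon\det C$ with Remark \ref{MC+=MC-} (which lets me pass between $\Gamma$ and $-\Gamma$ in the bipartite, i.e.\ even, case) pins down exactly the two exceptional families: all $f_i=+1$ (where the kernel of $B$ is spanned by $(1,1,\dots,1)$, the familiar degeneracy of $\widetilde A_{n-1}$), and all $f_i=-1$ with $n$ even (the $-\Gamma$ of the previous case).

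The main obstacle I expect is the determinant computation itself: handling the two corner entries of the cyclic tridiagonal matrix cleanly, and correctly tracking how the sign product $\epsilon$ enters, so that one does not miss a degeneracy or spuriously introduce one. A safe route is to set up the $2\times2$ transfer matrix $T=\begin{pmatrix}2 & -1\\ 1 & 0\end{pmatrix}$ (scaled appropriately by the edge signs), so that $\det C = \epsilon\cdot\mathrm{tr}(T^{\,n}) - 2$ up to an overall constant, and then use that $T$ has eigenvalue $1$ with multiplicity... — more precisely $T^n$ is controlled by the Dickson polynomials $\mathbb{E}_{n-1}(1,1)$ evaluated at $x=1,\alpha=1$, which the paper has already packaged. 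Once the trace is written in those terms, the vanishing condition drops out by the same elementary case analysis on $\mathbb{E}_{r-1}(1,f_if_j)$ used at the end of the proof of Theorem \ref{line-3-relation}, and the proof concludes.
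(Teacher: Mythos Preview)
Your approach contains a genuine error that breaks the argument: the reduction to the single invariant $\epsilon=\prod_{k} f_k$ is incorrect. First, the factorisation $B=DC$ with $C$ symmetric and $C_{i,i+1}=-f_if_{i+1}$ does not hold: with $D=\mathrm{diag}(f_i)$ one gets $(DC)_{i,i+1}=f_i\cdot(-f_if_{i+1})=-f_{i+1}$, not the required $-1$. More to the point, the claim that $\det C$ (equivalently $\det B$) depends only on $\epsilon$ is false, and indeed cannot be true given what the proposition asserts: the degenerate cases are ``all $f_i=+1$'' and ``all $f_i=-1$ with $n$ even'', a much finer condition than any condition on $\epsilon$ alone. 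For a concrete counterexample take $n=3$: the sign patterns $(+1,+1,+1)$ and $(-1,-1,+1)$ both have $\epsilon=+1$, yet the first gives $\det B=0$ (the familiar $\widetilde{A}_{2}$ degeneracy, kernel spanned by $(1,1,1)$) while the second gives $\det B=8\neq 0$. The gauge trick you invoke --- conjugating by a diagonal sign matrix to normalise edge signs --- applies to matrices whose \emph{off-diagonal} entries carry the $\pm1$ data; here it is the \emph{diagonal} entries $2f_i$ that carry the signs, and diagonal conjugation leaves the diagonal fixed. Consequently the Dickson-polynomial computation you set up afterwards, and the conclusion ``$\det C=0$ exactly when $\epsilon=-1$ and $n$ is even'', are answering the wrong question; note in particular that this conclusion already contradicts the all-plus case.

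For comparison, the paper's own proof is minimal: it records the explicit shape of $\nu_i$ (entry $-2$ at position $i$, entry $f_i$ at positions $i\pm1$, zero elsewhere) and then simply asserts that linear independence fails only in the two listed cases. If you want an honest argument, one viable route is to study the kernel via the transfer recurrence $x_{i+1}=2f_i\,x_i-x_{i-1}$ with one-step matrices $T_{\pm}=\pm I+N_{\pm}$ ($N_{\pm}$ nilpotent), and show that mixing a $T_{+}$ with a $T_{-}$ anywhere along the cycle forces the monodromy $T_n\cdots T_1$ to have $|\mathrm{tr}|>2$, hence no eigenvalue $1$; but this genuinely requires tracking the full sequence $(f_1,\dots,f_n)$, not merely its product.
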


\begin{proof}
Since for every $1\leq i\leq n$,  $\nu_{i}$ is a vector in $\mathbb{R}^n$ by considering the $i$-th row of $\tau_i$ as a vector, $\nu_i$ has the following form ($n+1$ is considered to be $1$ and $1-1$ is considered to be $n$ as it is mentioned in Remark \ref{n+1=1}):
\begin{itemize}
    \item $\left(\nu_{i}\right)_j=0$ ~in case $j\notin\{i-1, ~i, ~i+1\}$;
    \item $\left(\nu_{i}\right)_j=f_i$ ~in case $j=i-1$ or $j=i+1$;
    \item $\left(\nu_{i}\right)_j=-2$ ~in case $j=i$.
\end{itemize}
Since either $f_i=+1$ or $f_i=-1$, it can been shown easily that the set of vectors $$\{\nu_{1}, \nu_{2}, \ldots, \nu_{n}\}$$
is a linearly independent set of vectors in $\mathbb{R}^n$ unless $f_i=+1$ for every $1\leq i\leq n$ or $f_i=-1$ for every $1\leq i\leq n$ and $n$ is even.
\end{proof}

\begin{remark}\label{s-tilde-non-independent}
As a conclusion of Proposition \ref{independent-simply-cycle}, the  affine Coxeter groups  $\widetilde{S}_n$  are the only cases of mixed-sign simply-laced Coxeter groups with associated mixed-sign Coxeter graph is a simple cycle, and with a degenerate associated bilinear form.
\end{remark}

\begin{proposition}\label{tau-simply-laced-cycle}

Let $\Gamma$ be a simply-laced simple cycle with vertices $s_1, s_2, \ldots, s_n$  such that the vertex $s_i$ is signed by $f_i$ for  $1\leq i\leq n$, and the vertices $s_i$ and $s_{i+1}$ are connected by a simply-laced edge for $1\leq i\leq n$, where by Remark \ref{n+1=1}, we consider $n+1$ to be $1$ and $1-1$ to be $n$. For $1\leq i,j\leq n$, such that $i\neq j$, let define $$\kappa_{i,j}:=f_{i+1}\cdot f_{i+2}\cdots f_{j-1}+f_{i-1}\cdot f_{i-2}\cdots f_{j+1}$$

Then the following holds:

\begin{itemize}

\item 
\begin{equation}\label{asec-cycle}
\begin{aligned} 
&\left[\left(1+\tau_{j+1,~i,~j+1}\right)\left(1+\tau_{i+1,~j,~i+1}\right)\right]^{r}-\left[\left(1+\tau_{i+1,~j,~i+1}\right)\left(1+\tau_{j+1,~i,~j+1}\right)\right]^{r}\\ &=\left(f_{i}f_{j}\right)^{r-1} \cdot \left\{\mathbb{E}_{2 r-1}{\left(\kappa_{i,j}~,~f_{i}f_{j} \right)}\right\} \cdot \left(\tau_{j+1, ~i, ~, j+1}\cdot \tau_{i+1,~j,~i+1}-\tau_{i+1, ~j, ~, i+1}\cdot \tau_{j+1,~i,~j+1}\right).
\end{aligned}
 \end{equation}
 \item 
\begin{equation}\label{asoc-cycle}
\begin{aligned} 
&[(1+\tau_{j+1,~i,~j+1})(1+\tau_{i+1,~j,~i+1})]^{r}(1+\tau_{j+1,~i,~j+1}) \\ &
-[(1+\tau_{i+1,~j,~i+1})(1+\tau_{j+1,~i,~j+1})]^{r}(1+\tau_{i+1,~j,~i+1})
\\ &=\left(f_{i}f_{j}\right)^r \cdot \left\{\mathbb{E}_{2 r}{\left(\kappa_{i,j}~, ~f_{i}f_{j}\right)}\right\} \cdot\left(\tau_{j+1,~i,~j+1}-\tau_{i+1,~j,~i+1}\right).
\end{aligned}
\end{equation}
\end{itemize}

\end{proposition}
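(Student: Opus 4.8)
The plan is to reduce the cycle case to the already-proven line case. The key observation is that in a simple cycle with vertices $s_1,\dots,s_n$, removing one vertex (say a vertex not among $s_i$ or $s_j$) turns the cycle into a line, and the elements $\tau_{i+1,j,i+1}$ and $\tau_{j+1,i,j+1}$ are, respectively, built from the two arcs of the cycle joining $s_i$ to $s_j$. First I would set up notation so that $\tau_{i+1,j,i+1}$ is the element associated to the path through $s_{i+1},\dots,s_{j-1}$ (the "inner" arc) and $\tau_{j+1,i,j+1}$ is the element associated to the path through $s_{j+1},\dots,s_{i-1}$ (the "outer" arc, using the cyclic convention of Remark \ref{n+1=1}). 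Using Proposition \ref{tau-simply-laced} applied to each arc separately (each arc is a line), I would compute the relevant products, in particular the analogue of $\tau_{(i,j,i)}=f_i f_j \tau_i$: here one gets that $\tau_{i+1,j,i+1}$ and $\tau_{j+1,i,j+1}$ play the roles of $\tau_i$ and $\tau_j$ from Proposition \ref{r-dickson}, with the crucial modification that the "middle sandwich" products no longer collapse to $f_i f_j$ times a single $\tau$, but instead produce the factor $\kappa_{i,j}=f_{i+1}\cdots f_{j-1}+f_{i-1}\cdots f_{j+1}$ in the role that the scalar $x=1$ played in the Dickson polynomial.

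Concretely, the second step is to verify the "defining relations" for the pair $T_1:=\tau_{j+1,i,j+1}$, $T_2:=\tau_{i+1,j,i+1}$ that make the induction of Proposition \ref{r-dickson} go through verbatim with $x=\kappa_{i,j}$, $\alpha=f_i f_j$. The properties I need are: $(1+T_k)T_k=T_k(1+T_k)=-T_k$ (automatic since these are $\pi_w-1$ for involutions $w$, by Proposition \ref{tau-middle} and the fact that $s_{i+1,j,i+1}$, $s_{j+1,i,j+1}$ are involutions); $T_1 T_2 T_1 = f_i f_j\, T_1$ and $T_2 T_1 T_2 = f_i f_j\, T_2$ (these encode the lengths/signs along both arcs and should follow from Proposition \ref{tau-simply-laced}, part 2, applied to each arc, together with $E_{k,k+1}^2=1$ and $f_k^2=1$); and the key new identity $T_1 T_2 = (\text{scalar }\kappa_{i,j})\cdot(\text{something})$ — more precisely the product $T_1 T_2 + T_2 T_1$ or the relevant "trace-like" combination producing $\kappa_{i,j}$. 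The sum of the two arc-contributions is exactly where the two summands $f_{i+1}\cdots f_{j-1}$ and $f_{i-1}\cdots f_{j+1}$ of $\kappa_{i,j}$ come from: one arc contributes one summand, the other arc the other.

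Once those structural identities are in hand, the third step is purely formal: rerun the induction on $r$ from the proof of Proposition \ref{r-dickson}, replacing every occurrence of $\tau_i,\tau_j$ by $T_1,T_2$, every occurrence of the scalar $1$ (which was $x$ in the Dickson recursion $\mathbb{E}_n(x,\alpha)=x\mathbb{E}_{n-1}-\alpha\mathbb{E}_{n-2}$) by $\kappa_{i,j}$, and keeping $\alpha=f_i f_j$. The coefficient bookkeeping with $\rho_{i,i}=\rho_{j,j}$ (by the symmetry $i\leftrightarrow j$, arc $\leftrightarrow$ arc) goes through unchanged, and the final telescoping $\kappa_{i,j}\cdot\mathbb{E}_{2r_0}-f_i f_j\cdot\mathbb{E}_{2r_0-1}=\mathbb{E}_{2r_0+1}$ is precisely the Dickson recurrence with $x=\kappa_{i,j}$. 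Equation \eqref{asoc-cycle} follows by the same technique, mirroring the remark at the end of the proof of Proposition \ref{r-dickson}.

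The main obstacle I expect is establishing the "middle" identities $T_1 T_2 T_1 = f_i f_j T_1$ and the precise form $T_1 T_2 + T_2 T_1 = \kappa_{i,j}\,(\cdots)$ cleanly — that is, confirming that when the two arcs are glued at $s_i$ and $s_j$, the products along the two arcs combine additively with exactly the sign-products $f_{i+1}\cdots f_{j-1}$ and $f_{i-1}\cdots f_{j+1}$, and that no extra cross-terms survive because $\tau_p\tau_q=0$ whenever $s_p,s_q$ are non-adjacent (Proposition \ref{tau-property}(iv)) — here one must be careful that in the cycle $s_{i-1}$ and $s_{i+1}$ may or may not be adjacent to various arc-vertices, and that the degenerate $\widetilde{S}_n$ cases (Remark \ref{s-tilde-non-independent}) do not cause a division-by-zero or a vanishing that breaks the argument; since the statement is purely an identity in $M_n(\mathbb{R})$ it holds regardless, but the interpretation in terms of relations will later need Proposition \ref{independent-simply-cycle}.
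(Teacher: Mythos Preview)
Your overall strategy is the same as the paper's: expand $T_1:=\tau_{j+1,i,j+1}$ and $T_2:=\tau_{i+1,j,i+1}$ via Definition~\ref{tau-i,j,i}, kill non-adjacent products using Proposition~\ref{tau-property}(iv), and then rerun the induction of Proposition~\ref{r-dickson} with modified scalars. The paper does exactly this, ending with ``by replacing $\tau_{(i+1,j,i+1)}$ instead of $\tau_j$ and $\tau_{(j+1,i,j+1)}$ instead of $\tau_i$ on Proposition~\ref{r-dickson}, and applying the same process''.

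However, one of your claimed structural identities is wrong, and this is not a harmless slip, because it is precisely the identity that determines the Dickson parameter $x$. You assert $T_1T_2T_1=f_if_jT_1$, citing Proposition~\ref{tau-simply-laced} part~2 ``applied to each arc''. But that proposition treats a single arc, whereas $T_1T_2T_1$ mixes the two arcs at the junction vertices $s_i,s_j$. Since $T_1,T_2$ are rank-one (being $\pi_w-1$ for reflections $w$), one does have $T_1T_2T_1=cT_1$ for some scalar $c$; the point is that $c=f_if_j\kappa_{i,j}^2$, not $f_if_j$. Concretely, the paper's expansion shows that the ``core'' of $T_1T_2$ is
\[
\tau_i\bigl(\tau_{i+1}\cdots\tau_{j-1}+\tau_{i-1}\cdots\tau_{j+1}\bigr)\tau_j
= f_i\bigl(f_{i+1}\cdots f_{j-1}+f_{i-1}\cdots f_{j+1}\bigr)E_{i,j}\tau_j
= f_i\,\kappa_{i,j}\,E_{i,j}\tau_j,
\]
which is the cycle analogue of $\tau_i\tau_j=f_iE_{i,j}\tau_j$ from Proposition~\ref{tau-property}(iii); iterating gives the core of $T_1T_2T_1$ equal to $f_if_j\kappa_{i,j}^2\tau_i$. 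This is where $\kappa_{i,j}$ enters the $x$-slot of the Dickson recursion, not via a separate ``$T_1T_2+T_2T_1$'' identity as you speculate. Once you correct this, your inductive step $\kappa_{i,j}\cdot\mathbb{E}_{2r_0}-f_if_j\cdot\mathbb{E}_{2r_0-1}=\mathbb{E}_{2r_0+1}$ goes through exactly as you describe, and the rest of your outline matches the paper.
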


\begin{proof}
by Definition \ref{tau-i,j,i}, 
\begin{equation}\label{tau-i+1-j-i+1-j+1-i-j+1}
    \tau_{i+1, ~j,~i+1}:=
   \sum_{k_{1}=i+1}^{j} \sum_{k_{2}=i+1}^{j} \tau_{\left(k_{1}, ~j, ~k_{2}\right)}\quad \quad \tau_{j+1, ~i,~j+1}:=
   \sum_{k_{1}=j+1}^{i} \sum_{k_{2}=j+1}^{i} \tau_{\left(k_{1}, ~i, ~k_{2}\right)}.
\end{equation}

Hence,
\begin{equation}\label{asenew-cycle}
\begin{aligned}
&\left[\left(1+\tau_{j+1,~i,~j+1}\right)\left(1+\tau_{i+1,~j,~i+1}\right)\right]^{r}-\left[\left(1+\tau_{i+1,~j,~i+1}\right)\left(1+\tau_{j+1,~i,~j+1}\right)\right]^{r}\\ &=\left[\left(1+\sum_{k_{1}=j+1}^{i} \sum_{k_{2}=j+1}^{i} \tau_{\left(k_{1}, ~i, ~k_{2}\right)}\right)\left(1+\sum_{k_{1}=i+1}^{j} \sum_{k_{2}=i+1}^{j} \tau_{\left(k_{1}, ~j, ~k_{2}\right)}\right)\right]^{r}\\&-\left[\left(1+\sum_{k_{1}=i+1}^{j} \sum_{k_{2}=i+1}^{j} \tau_{\left(k_{1}, ~j, ~k_{2}\right)}\right)\left(1+\sum_{k_{1}=j+1}^{i} \sum_{k_{2}=j+1}^{i} \tau_{\left(k_{1}, ~i, ~k_{2}\right)}\right)\right]^{r}
\end{aligned}
\end{equation}

and

\begin{equation}\label{asonew-cycle}
\begin{aligned}
&\left[\left(1+\tau_{j+1,~i,~j+1}\right)\left(1+\tau_{i+1,~j,~i+1}\right)\right]^{r}\left(1+\tau_{j+1,~i,~j+1}\right)-\left[\left(1+\tau_{i+1,~j,~i+1}\right)\left(1+\tau_{j+1,~i,~j+1}\right)\right]^{r}\left(1+\tau_{i+1,~j,~i+1}\right)
\\ &=\left[\left(1+\sum_{k_{1}=j+1}^{i} \sum_{k_{2}=j+1}^{i} \tau_{\left(k_{1}, ~i, ~k_{2}\right)}\right)\left(1+\sum_{k_{1}=i+1}^{j} \sum_{k_{2}=i+1}^{j} \tau_{\left(k_{1}, ~j, ~k_{2}\right)}\right)\right]^{r}\left(1+\sum_{k_{1}=j+1}^{i} \sum_{k_{2}=j+1}^{i} \tau_{\left(k_{1}, ~i, ~k_{2}\right)}\right) \\&-\left[\left(1+\sum_{k_{1}=i+1}^{j} \sum_{k_{2}=i+1}^{j} \tau_{\left(k_{1}, ~j, ~k_{2}\right)}\right)\left(1+\sum_{k_{1}=j+1}^{i} \sum_{k_{2}=j+1}^{i} \tau_{\left(k_{1}, ~i, ~k_{2}\right)}\right)\right]^{r}\left(1+\sum_{k_{1}=i+1}^{j} \sum_{k_{2}=i+1}^{j} \tau_{\left(k_{1}, ~j, ~k_{2}\right)}\right)
\end{aligned}
\end{equation}

Now, notice that,

$\left[\left(1+\sum_{k_{1}=j+1}^{i} \sum_{k_{2}=j+1}^{i} \tau_{\left(k_{1}, ~i, ~k_{2}\right)}\right)\left(1+\sum_{k_{1}=i+1}^{j} \sum_{k_{2}=i+1}^{j} \tau_{\left(k_{1}, ~j, ~k_{2}\right)}\right)\right]^{r}$, \\ \\ $\left[\left(1+\sum_{k_{1}=i+1}^{j} \sum_{k_{2}=i+1}^{j} \tau_{\left(k_{1}, ~j, ~k_{2}\right)}\right)\left(1+\tau_{\sum_{k_{1}=j+1}^{i} \sum_{k_{2}=j+1}^{i} \tau_{\left(k_{1}, ~i, ~k_{2}\right)}}\right)\right]^{r}$, \\ \\ $\left[\left(1+\sum_{k_{1}=j+1}^{i} \sum_{k_{2}=j+1}^{i} \tau_{\left(k_{1}, ~i, ~k_{2}\right)}\right)\left(1+\sum_{k_{1}=i+1}^{j} \sum_{k_{2}=i+1}^{j} \tau_{\left(k_{1}, ~j, ~k_{2}\right)}\right)\right]^{r}\left(1+\sum_{k_{1}=j+1}^{i} \sum_{k_{2}=j+1}^{i} \tau_{\left(k_{1}, ~i, ~k_{2}\right)}\right)$, ~and \\ \\ $\left[\left(1+\sum_{k_{1}=i+1}^{j} \sum_{k_{2}=i+1}^{j} \tau_{\left(k_{1}, ~j, ~k_{2}\right)}\right)\left(1+\sum_{k_{1}=j+1}^{i} \sum_{k_{2}=j+1}^{i} \tau_{\left(k_{1}, ~i, ~k_{2}\right)}\right)\right]^{r}\left(1+\sum_{k_{1}=i+1}^{j} \sum_{k_{2}=i+1}^{j} \tau_{\left(k_{1}, ~j, ~k_{2}\right)}\right)$ \\ \\ are sum of elements of the form:
\begin{itemize}
    \item $(\sum_{k_{1}=j+1}^{i} \sum_{k_{2}=j+1}^{i} \tau_{\left(k_{1}, ~i, ~k_{2}\right)}\cdot \sum_{k_{1}=i+1}^{j} \sum_{k_{2}=i+1}^{j} \tau_{\left(k_{1}, ~j, ~k_{2}\right)})^k$;
    \item $(\sum_{k_{1}=j+1}^{i} \sum_{k_{2}=j+1}^{i} \tau_{\left(k_{1}, ~i, ~k_{2}\right)}\cdot \sum_{k_{1}=i+1}^{j} \sum_{k_{2}=i+1}^{j} \tau_{\left(k_{1}, ~j, ~k_{2}\right)})^k\cdot \sum_{k_{1}=j+1}^{i} \sum_{k_{2}=j+1}^{i} \tau_{\left(k_{1}, ~i, ~k_{2}\right)}$;
    \item $(\sum_{k_{1}=i+1}^{j} \sum_{k_{2}=i+1}^{j} \tau_{\left(k_{1}, ~j, ~k_{2}\right)}\cdot \sum_{k_{1}=j+1}^{i} \sum_{k_{2}=j+1}^{i} \tau_{\left(k_{1}, ~i, ~k_{2}\right)})^k$;
    \item $(\sum_{k_{1}=i+1}^{j} \sum_{k_{2}=i+1}^{j} \tau_{\left(k_{1}, ~j, ~k_{2}\right)}\cdot\sum_{k_{1}=j+1}^{i} \sum_{k_{2}=j+1}^{i} \tau_{\left(k_{1}, ~i, ~k_{2}\right)} )^k\cdot \sum_{k_{1}=i+1}^{j} \sum_{k_{2}=i+1}^{j} \tau_{\left(k_{1}, ~j, ~k_{2}\right)}$.
\end{itemize}
   for $0\leq k\leq r$.
Now, by Proposition \ref{tau-property}, $\tau_i\tau_{i'}=0$ and $\tau_j\tau_{j'}=0$ for $i'\notin\{i-1, i, i+1\}$, and $j'\notin\{j-1, j, j+1\}$. Hence,  the following holds:
\begin{itemize}
  \item 
  \begin{equation}
      \begin{aligned}
      &\sum_{k_{1}=j+1}^{i} \sum_{k_{2}=j+1}^{i} \tau_{\left(k_{1}, ~i, ~k_{2}\right)}\cdot \sum_{k_{1}=i+1}^{j} \sum_{k_{2}=i+1}^{j} \tau_{\left(k_{1}, ~j, ~k_{2}\right)})^k \\&=\sum_{k_{1}=j+1}^{i-1}\tau_{\left(k_{1}, ~i-1, ~i-1\right)} \\ &\cdot \left[ \tau_i \left(\tau_{i+1} \tau_{i+2}\cdots \tau_{j-2}\tau_{j-1}+ \tau_{i-1} \tau_{i-2}\cdots \tau_{j+2}\tau_{j+1}\right) \tau_{j} \left(\tau_{i+1}\tau_{i+2}\cdots \tau_{j-2}\tau_{j-1}+ \tau_{i-1} \tau_{i-2}\cdots \tau_{j+2} \tau_{j+1}\right)\right]^{k-1} \\ & \cdot \tau_i \left(\tau_{i+1}\tau_{i+2}\cdots \tau_{j-2} \tau_{j-1}+ \tau_{i-1}\tau_{i-2}\cdots \tau_{j+2} \tau_{j+1}\right) \tau_{j}\cdot \sum_{k_{2}=i+1}^{j-1} \tau_{\left(j-1, ~j-1, ~k_{2}\right)};
      \end{aligned}
  \end{equation}
  
  \item 
  \begin{equation}
      \begin{aligned}
      &\sum_{k_{1}=i+1}^{j} \sum_{k_{2}=i+1}^{j} \tau_{\left(k_{1}, ~j, ~k_{2}\right)}\cdot \sum_{k_{1}=j+1}^{i} \sum_{k_{2}=j+1}^{i} \tau_{\left(k_{1}, ~i, ~k_{2}\right)})^k \\&=\sum_{k_{1}=i+1}^{j-1}\tau_{\left(k_{1}, ~j-1, ~j-1\right)} \\ &\cdot \left[ \tau_j \left(\tau_{j+1} \tau_{j+2}\cdots \tau_{i-2}\tau_{i-1}+ \tau_{j-1} \tau_{j-2}\cdots \tau_{i+2}\tau_{i+1}\right) \tau_{i} \left(\tau_{j+1}\tau_{j+2}\cdots \tau_{i-2}\tau_{i-1}+ \tau_{j-1} \tau_{j-2}\cdots \tau_{i+2} \tau_{i+1}\right)\right]^{k-1} \\ & \cdot \tau_j \left(\tau_{j+1}\tau_{j+2}\cdots \tau_{i-2} \tau_{i-1}+ \tau_{j-1}\tau_{j-2}\cdots \tau_{i+2} \tau_{i+1}\right) \tau_{i}\cdot \sum_{k_{2}=j+1}^{i-1} \tau_{\left(i-1, ~i-1, ~k_{2}\right)};
      \end{aligned}
  \end{equation}
  
  \item 
  \begin{equation}
      \begin{aligned}
      &\sum_{k_{1}=j+1}^{i} \sum_{k_{2}=j+1}^{i} \tau_{\left(k_{1}, ~i, ~k_{2}\right)}\cdot \sum_{k_{1}=i+1}^{j} \sum_{k_{2}=i+1}^{j} \tau_{\left(k_{1}, ~j, ~k_{2}\right)})^k\cdot \sum_{k_{1}=j+1}^{i} \sum_{k_{2}=j+1}^{i} \tau_{\left(k_{1}, ~i, ~k_{2}\right)} \\&=\sum_{k_{1}=j+1}^{i-1}\tau_{\left(k_{1}, ~i-1, ~i-1\right)} \\ &\cdot \left[ \tau_i \left(\tau_{i+1} \tau_{i+2}\cdots \tau_{j-2}\tau_{j-1}+ \tau_{i-1} \tau_{i-2}\cdots \tau_{j+2}\tau_{j+1}\right) \tau_{j} \left(\tau_{i+1}\tau_{i+2}\cdots \tau_{j-2}\tau_{j-1}+ \tau_{i-1} \tau_{i-2}\cdots \tau_{j+2} \tau_{j+1}\right)\right]^{k} \\ & \cdot \tau_i\cdot \sum_{k_{2}=j+1}^{i-1} \tau_{\left(i-1, ~i-1, ~k_{2}\right)};
      \end{aligned}
  \end{equation}
  
  \item 
  \begin{equation}
      \begin{aligned}
      &\sum_{k_{1}=i+1}^{j} \sum_{k_{2}=i+1}^{j} \tau_{\left(k_{1}, ~j, ~k_{2}\right)}\cdot \sum_{k_{1}=j+1}^{i} \sum_{k_{2}=j+1}^{i} \tau_{\left(k_{1}, ~i, ~k_{2}\right)})^k\cdot \sum_{k_{1}=i+1}^{j} \sum_{k_{2}=i+1}^{j} \tau_{\left(k_{1}, ~j, ~k_{2}\right)} \\&=\sum_{k_{1}=i+1}^{j-1}\tau_{\left(k_{1}, ~j-1, ~j-1\right)} \\ &\cdot \left[ \tau_j \left(\tau_{j+1} \tau_{j+2}\cdots \tau_{i-2}\tau_{i-1}+ \tau_{j-1} \tau_{j-2}\cdots \tau_{i+2}\tau_{i+1}\right) \tau_{i} \left(\tau_{j+1}\tau_{j+2}\cdots \tau_{i-2}\tau_{i-1}+ \tau_{j-1} \tau_{j-2}\cdots \tau_{i+2} \tau_{i+1}\right)\right]^{k} \\ & \cdot \tau_j\cdot \sum_{k_{2}=i+1}^{j-1} \tau_{\left(j-1, ~j-1, ~k_{2}\right)};
      \end{aligned}
  \end{equation}

\end{itemize}

Then, by  replacing $\tau_{\left(i+1, ~j, ~i+1\right)} $ instead of $\tau_j$ and by  replacing $\tau_{\left(j+1, ~i, ~j+1\right)}$ instead of $\tau_i$ on Proposition  \ref{r-dickson}, and applying the same process of  Proposition  \ref{r-dickson}, we get the desired results of the proposition.

\end{proof}

Now, we give the theorem, which describes the relation on a mixed-sign simply-laced Coxeter group where the associated mixed-sign Coxeter graph is a simple cycle.

\begin{theorem}\label{mixed-sign-s-laced-cycle}
Let $\Gamma$ be a simply-laced simple cycle with vertices $s_1, s_2, \ldots, s_n$  such that the vertex $s_i$ is signed by $f_i$ for  $1\leq i\leq n$, and the vertices $s_i$ and $s_{i+1}$ are connected by a simply-laced edge for $1\leq i\leq n$, where by Remark \ref{n+1=1}, we consider $n+1$ to be $1$ and $1-1$ to be $n$. Then for $1\leq i,~j\leq n$, apart from the Coxeter relations:

\begin{align}\label{4com}
 (s_i\cdot s_j)^2=1 \ \Leftrightarrow 2\leq |i-j|\leq n-2.   
\end{align}

\begin{align}\label{4cox}
(s_i\cdot s_{i+1})^3=1 \ \Leftrightarrow f_i=f_{i+1}.    
\end{align}

which hold by Proposition \ref{2-+}, the following generalized Coxeter relations hold in $W(\Gamma)$:

\begin{align} \label{4l}
(s_{i}\cdot s_{i+1,~j,~i+1})^3=1 \ \Leftrightarrow \ j\neq i-1\quad \text{and}\quad f_i=f_j 
\end{align}

\begin{align} \label{4}
(s_{j+1, i, j+1}\cdot s_{i+1, j, i+1})^2=1 \ \Leftrightarrow \ \frac{\prod_{k=1}^n f_k}{f_i\cdot f_j} =-1
.\end{align}
\end{theorem}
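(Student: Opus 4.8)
The plan is to transport every group relation into a matrix identity through the mixed-sign geometric representation $\pi$, under which $W(\Gamma)$ is realized by the matrices generated by the $\pi_{s_i}=1+\tau_i$, so that a relation holds in $W(\Gamma)$ precisely when the corresponding identity holds in $M_n(\mathbb{R})$. By the cyclic analogue of Proposition \ref{tau-middle} (the same product expansion, now read around the cycle) one has $\pi_{s_{i+1,j,i+1}}=1+\tau_{i+1,j,i+1}$ and $\pi_{s_{j+1,i,j+1}}=1+\tau_{j+1,i,j+1}$; since $s_{i+1,j,i+1}$ and $s_{j+1,i,j+1}$ are conjugates of $s_j$ and $s_i$ respectively, these are involutions, just like $1+\tau_i$. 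Following the reduction in the proof of Theorem \ref{line-3-relation}, a relation $(ab)^r=1$ between involutions $a,b$ is equivalent to the vanishing of $(ab)^{r/2}-(ba)^{r/2}$ when $r$ is even, and of $(ab)^{(r-1)/2}a-(ba)^{(r-1)/2}b$ when $r$ is odd — precisely the alternating-product differences computed in Proposition \ref{tau-simply-laced-cycle} (or, for a single generator against an arc, in Proposition \ref{tau-simply-laced}).

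For the square relation \eqref{4} I would apply identity \eqref{asec-cycle} with $r=1$. Its left side is exactly the commutator $(1+\tau_{j+1,i,j+1})(1+\tau_{i+1,j,i+1})-(1+\tau_{i+1,j,i+1})(1+\tau_{j+1,i,j+1})$, and since $\mathbb{E}_1(x,\alpha)=x$ its right side is $\kappa_{i,j}\cdot\bigl(\tau_{j+1,i,j+1}\tau_{i+1,j,i+1}-\tau_{i+1,j,i+1}\tau_{j+1,i,j+1}\bigr)$. Because both factors are involutions, $(s_{j+1,i,j+1}\cdot s_{i+1,j,i+1})^2=1$ iff they commute iff this commutator vanishes, and — granting the $\tau$-commutator is nonzero — this happens iff $\kappa_{i,j}=0$. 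It then remains to rewrite $\kappa_{i,j}=0$: each summand $f_{i+1}\cdots f_{j-1}$ and $f_{i-1}\cdots f_{j+1}$ of $\kappa_{i,j}$ is a product of signs, hence equals $\pm1$, and the index sets $\{i+1,\dots,j-1\}$ and $\{j+1,\dots,i-1\}$ partition $\{1,\dots,n\}\setminus\{i,j\}$, so the product of the two summands equals $\prod_{k=1}^n f_k/(f_if_j)$; thus $\kappa_{i,j}=0$ exactly when the two summands have opposite sign, i.e. exactly when $\prod_{k=1}^n f_k/(f_if_j)=-1$, which is \eqref{4}.

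For the cube relation \eqref{4l} I would split into two cases. If $j\neq i-1$, then among $\{s_{i-1},s_i,s_{i+1}\}$ the arc $s_{i+1,j,i+1}$ touches $s_i$ only at $s_{i+1}$, so Proposition \ref{tau-property} forces $\tau_i\tau_{i+1,j,i+1}$ to retain only the summands $\tau_{(i+1,j,k_2)}$ and hence $\tau_i\tau_{i+1,j,i+1}\tau_i=\tau_{(i,j,i)}=f_if_j\,\tau_i$ — exactly the squaring identity that drives Proposition \ref{tau-simply-laced}. Consequently the conclusions \eqref{asec}, \eqref{asoc} remain valid here with Dickson parameter $x=1$, and the argument of Theorem \ref{line-3-relation} applies verbatim: $(s_i\cdot s_{i+1,j,i+1})^r=1$ forces $\mathbb{E}_{r-1}(1,f_if_j)=0$, which over the signs $f_if_j=\pm1$ singles out $r=3$ together with $f_i=f_j$. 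If instead $j=i-1$, then $s_i=s_{(i-1)+1,\,i,\,(i-1)+1}$ and $s_{i+1,j,i+1}=s_{i+1,i-1,i+1}$, so the word has the two-arc shape governed by \eqref{asoc-cycle}; taking $r=1$ there, $(s_i\cdot s_{i+1,i-1,i+1})^3=1$ would force $\mathbb{E}_2(\kappa_{i,i-1},f_if_{i-1})=\kappa_{i,i-1}^2-f_if_{i-1}=0$, which is impossible since $\kappa_{i,i-1}\in\{-2,0,2\}$ while $f_if_{i-1}\in\{-1,1\}$. Hence the cube relation can hold only when $j\neq i-1$, matching the statement.

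The delicate point, and the one I expect to cost the most work, is justifying the non-vanishing of the relevant $\tau$-matrix differences and commutators, since without it the equivalences run only one way. When one side is a single generator, $\tau_i$ and $\tau_{i+1,j,i+1}$ have disjoint supports of nonzero rows, so the non-vanishing of $\tau_i-\tau_{i+1,j,i+1}$ and of the corresponding commutator is immediate from Corollary \ref{tau-first-non-zero}; but for the two complementary arcs in \eqref{4} the nonzero rows together exhaust $\{1,\dots,n\}$, and here one genuinely needs linear independence of $\{\nu_1,\dots,\nu_n\}$ — i.e. non-degeneracy of $B$ — together with Proposition \ref{sum-tau-i-finishing}. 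By Proposition \ref{independent-simply-cycle} and Remark \ref{s-tilde-non-independent} the only degenerate cases are the affine groups $\widetilde S_n$, in which all $f_k$ share a sign; there $\prod_{k}f_k/(f_if_j)=1\neq-1$ and $\kappa_{i,i-1}=2$, and a separate check from the known structure of affine Weyl groups confirms that neither the square relation nor the extra cube relation holds, so the theorem remains correct in those cases as well.
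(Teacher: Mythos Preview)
Your overall plan---reduce group relations to matrix identities via $\pi$, then control the alternating-product differences by the Dickson polynomial framework of Proposition~\ref{tau-simply-laced-cycle}---matches the paper's strategy, and your treatment of \eqref{4l} (both the $j\neq i-1$ reduction to the line case and the $j=i-1$ argument via \eqref{asoc-cycle}) as well as your handling of the degenerate affine exceptions are essentially what the paper does.

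The genuine gap is in your argument for \eqref{4}. At $r=1$ the left side of \eqref{asec-cycle} is
\[
(1+\tau_{j+1,i,j+1})(1+\tau_{i+1,j,i+1})-(1+\tau_{i+1,j,i+1})(1+\tau_{j+1,i,j+1})
=\tau_{j+1,i,j+1}\tau_{i+1,j,i+1}-\tau_{i+1,j,i+1}\tau_{j+1,i,j+1},
\]
which is \emph{the very commutator $C$ appearing as a factor on the right}. So the identity you invoke reads $C=\kappa_{i,j}\,C$, and gives no criterion for $C=0$; you cannot ``grant the $\tau$-commutator nonzero'' to deduce $\kappa_{i,j}=0$, because that commutator \emph{is} $C$. (Taken literally, since $\kappa_{i,j}\in\{-2,0,2\}$ is never $1$, the identity would force $C=0$ unconditionally, which is false---this also shows that \eqref{asec-cycle} cannot be used naively at $r=1$.)

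The paper does not attempt to extract \eqref{4} from \eqref{asec-cycle} at all. It argues directly: by Proposition~\ref{sum-tau-i-beginning} the products $\tau_{i+1,j,i+1}\,\tau_{j+1,i,j+1}$ and $\tau_{j+1,i,j+1}\,\tau_{i+1,j,i+1}$ have their nonzero rows supported on the disjoint arcs $\{i+1,\dots,j\}$ and $\{j+1,\dots,i\}$, so $C=0$ forces each product to vanish separately. Then Proposition~\ref{sum-tau-i-finishing} (this is where non-degeneracy enters) lets one peel off the leftmost and rightmost $\tau$-factors, reducing to
\[
\tau_j\bigl(\tau_{j+1}\cdots\tau_{i-1}+\tau_{j-1}\cdots\tau_{i+1}\bigr)\tau_i=0,
\]
which by Proposition~\ref{tau-i1-ik} is equivalent to $f_{j+1}\cdots f_{i-1}+f_{j-1}\cdots f_{i+1}=0$, i.e.\ $\kappa_{i,j}=0$, i.e.\ $\prod_k f_k/(f_if_j)=-1$. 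Only \emph{after} this direct computation does the paper invoke the Dickson identities, and then only for $r\geq 3$, to rule out higher-order relations---exactly because the $r=1$ instance carries no information. You need this direct row-support argument (or an equivalent) to close the gap.
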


\begin{proof}
Notice, the subgraph of $\Gamma$ with the set of vertices $\{s_i, s_{i+1}, \ldots, s_j\}$ is a line for $j\neq i-1$. Therefore, the relations from Equation \eqref{4l} hold  by Theorem \ref{line-3-relation}, where considering the mixed-sign Coxeter group associated to that subgraph.
Hence, we turn to the proof of the relation from Equation \eqref{4}.

First, notice by Proposition \ref{independent-simply-cycle}, the only cases where the associated bilinear form is degenerate, may happen, just in case where one of the following holds:
\begin{itemize}
  \item $f_i=+1$ for every  $1\leq i\leq n$;
    \item $f_i=-1$ for every $1\leq i\leq n$ and $n$ is even.
\end{itemize}
In both cases there is no $i$ and $j$ such that $1\leq i, j\leq n$ and the condition $\frac{\prod_{k=1}^n f_k}{f_i\cdot f_j} =-1$ of 
Equation \eqref{4} holds. Hence, in the proof of the theorem we may assume and use that the associated bilinear form is non-degenerate.
\\

By Definition \ref{tau-i,j,i}, $\pi_{s_{i+1, j, ~i+1}}=1+\tau_{i+1, j, ~i+1}$ and $\pi_{s_{j+1, i,~j+1}}=1+\tau_{j+1, i, ~j+1}$. \\

Hence, we suppose to prove: 
$$[(1+\tau_{i+1, j,~i+1})(1+\tau_{j+1, i,~j+1})]^2=1 \ \ \  \text{if and only if} \ \ \ \frac{\prod_{k=1}^n f_k}{f_i\cdot f_j} =-1.$$\\

Notice, the condition
\begin{equation}[(1+\tau_{i+1, j, i+1})(1+\tau_{j+1, i, j+1})]^2=1\end{equation}
is equivalent to
\begin{equation}\label{i-j-i=j-i-j}
   (1+\tau_{i+1, j, i+1})(1+\tau_{j+1, i, j+1})=(1+\tau_{j+1, i, j+1})(1+\tau_{i+1, j, i+1}), \end{equation}
where, Equation \eqref{i-j-i=j-i-j} is equivalent to:
\begin{equation}\label{tau-i-j-i=j-i-j}
    \tau_{i+1, j, i+1}\cdot \tau_{j+1, i, j+1} = \tau_{j+1, i, j+1}\cdot \tau_{i+1, j, i+1} 
\end{equation}

By the definition of $\tau_{i+1,~j,~i+1}$ and $\tau_{j+1,i,~j+1}$ as it is defined in Definition \ref{tau-i,j,i}, and then by Proposition \ref{sum-tau-i-beginning}, Equation \eqref{tau-i-j-i=j-i-j} holds if and only if 
\begin{equation}\label{i-j-cycle-commute}
\tau_{i+1, j, i+1}\cdot \tau_{j+1, i, j+1} = 0\quad \text{and}\quad \tau_{j+1, i, j+1}\cdot \tau_{i+1, j, i+1}=0.
\end{equation}

Since by Proposition \ref{independent-simply-cycle}, the associated bilinear form is non-degenerate (apart from the case of $W(\Gamma)=\widetilde{S}_n$, which is not relevant for our proof as it is mentioned at the beginning of the proof), we apply Proposition \ref{sum-tau-i-finishing} on Equation \eqref{i-j-cycle-commute}.
Thus for every $p\in\{i+1, i+2, \ldots, j-1, j\}$ and $q\in \{j+1, j+2, \ldots, i-1, i\}$,
\begin{equation}\label{tau-i-tau-j=0}
    \left(\sum_{k_1=i+1}^{j}\tau_{(p, j, k_1)}\right)\cdot \left(\sum_{k_2=j+1}^{i}\tau_{(k_2, i, q)}\right)=0.
\end{equation}

Notice, by second part of Proposition \ref{tau-property},  if $|k_1-k_2|\neq 1$, then $\tau_{(p, j, k_1)}\cdot \tau_{(k_2, i, q)}=0$. \\

Therefore,  Equation \eqref{tau-i-tau-j=0} is equivalent to:
\begin{equation}\label{k1-k2}
    \tau_{(p, j, j)}\cdot \tau_{(j+1, i, q)} + \tau_{(p, j, i+1)}\cdot \tau_{(i, i, q)} =0
\end{equation}
where, Equation \eqref{k1-k2} can be written in the following form:
\begin{equation}\label{i+j=i-j=0}
    \tau_{(p, j-1, j-1)}\cdot \left(\tau_{j}\cdot \tau_{j+1}\cdots \tau_{i-1}\tau_{i} + \tau_{j}\cdot \tau_{j-1}\cdots \tau_{i+1}\tau_{i} \right)\cdot \tau_{(i-1, i-1, q)}=0.
\end{equation}

 By Proposition \ref{sum-tau-i-finishing}, we deduce
from Equation \eqref{i+j=i-j=0} the following observation:
\begin{equation}\label{i-j-i-j-i}
  \tau_{j}\cdot \tau_{j+1}\cdots \tau_{i-1}\tau_{i} + \tau_{j}\cdot \tau_{j-1}\cdots \tau_{i+1}\tau_{i}=0.  
\end{equation}

By the third part of Proposition \ref{tau-property} Equation \eqref{i-j-i-j-i} may occur if and only if:

\begin{equation}\label{f-i-f-j=0}
    f_j\cdot f_{j+1}\cdots f_{i-1}+f_j\cdot f_{j-1}\cdots f_{i+1}=
    f_j\cdot\left(f_{j+1}\cdots f_{i-1}+ f_{j-1}\cdots f_{i+1}\right)=0.
\end{equation}

Since $f_k\in \{1, -1\}$ for every $1\leq k\leq n$, Equation \eqref{f-i-f-j=0} holds if and only if 
$$f_{j+1}\cdot f_{j+2}\cdots f_{i-1}=-f_{j-1}\cdot f_{j-2}\cdots f_{i+1}  $$
which is equivalent to:
$$\frac{\prod_{k=1}^{n}f_k}{f_i\cdot f_j}=-1.$$

Now, we find which relations of a form

\begin{equation}\label{s-i-r-cycle}
  (s_{j+1, ~i, ~j+1}\cdot s_{i+1, ~j, ~i+1})^r=1\quad\quad \text{for}~~ r\geq 3.
\end{equation}

may hold in $W$.

Consider the relation from Equatuion \eqref{s-i-r-cycle}.

Since $s_{j+1, ~i, ~j+1}$ and $ s_{i+1, ~j, ~i+1}$ are both involutions,  Equation \eqref{s-i-r-cycle} is equivalent to:
\begin{itemize}
    \item In case of an even $r$:
    \begin{equation}\label{tau-i-r2-cycle}
        \left[\left(1+\tau_{j+1,~i,~j+1}\right)\left(1+\tau_{i+1,~j,~i+1}\right)\right]^{\frac{r}{2}}-\left[\left(1+\tau_{i+1,~j,~i+1}\right)\left(1+\tau_{j+1,~i,~j+1}\right)\right]^{\frac{r}{2}};
    \end{equation}
    \item In case of an odd $r$:
    \begin{equation}\label{tau-i-r3-cycle}
    \begin{aligned}
 &\left[\left(1+\tau_{j+1,~i,~j+1}\right)\left(1+\tau_{i+1,~j,~i+1}\right)\right]^{\frac{r-1}{2}}\left(1+\tau_{j+1,~i,~j+1}\right)\\&-\left[\left(1+\tau_{i+1,~j,~i+1}\right)\left(1+\tau_{j+1,~i,~j+1}\right)\right]^{\frac{r-1}{2}}\left(1+\tau_{i+1,~j,~i+1}\right).
    \end{aligned}
        \end{equation}
\end{itemize}

Now, by Proposition \ref{tau-simply-laced-cycle} the following holds:
\begin{itemize}
\item In case of an even $r$:
    \begin{equation}\label{asec1-cycle}
\begin{aligned} 
&\left[\left(1+\tau_{j+1,~i,~j+1}\right)\left(1+\tau_{i+1,~j,~i+1}\right)\right]^{\frac{r}{2}}-\left[\left(1+\tau_{i+1,~j,~i+1}\right)\left(1+\tau_{j+1,~i,~j+1}\right)\right]^{\frac{r}{2}}\\ &=\left(f_{i} f_{j}\right)^{\frac{r}{2}-1} \cdot \left\{\mathbb{E}_{r-1}{\left(\kappa_{i,j}~,~ f_{i} f_{j}\right)}\right\} \cdot \left(\tau_{j+1,~i,~j+1}\cdot \tau_{i+1,~j,~i+1}-\tau_{i+1,~j,~i+1}\cdot \tau_{j+1,~i,~j+1}\right).
\end{aligned}
 \end{equation}
 \item  In case of an odd $r$:
\begin{equation}\label{asoc1-cycle}
\begin{aligned} 
&[(1+\tau_{j+1,~i,~j+1})(1+\tau_{i+1,~j,~i+1})]^{\frac{r-1}{2}}(1+\tau_{j+1,~i,~j+1}) \\ &
-[(1+\tau_{i+1,~j,~i+1})(1+\tau_{j+1,~i,~j+1})]^{\frac{r-1}{2}}(1+\tau_{i+1,~j,~i+1})
\\ &=\left(f_{i} f_{j}\right)^{\frac{r-1}{2}} \cdot \left\{\mathbb{E}_{r-1}{\left(\kappa_{i,j}~, ~f_{i} f_{j}\right)}\right\} \cdot\left(\tau_{j+1,~i,~j+1}-\tau_{i+1,~j,~i+1}\right).
\end{aligned}
\end{equation}
where, $\kappa_{i,j}:=f_{i+1}\cdot f_{i+2}\cdots f_{j-1}+f_{i-1}\cdot f_{i-2}\cdots f_{j+1}$.
\end{itemize}

Notice, 
$$\left(\tau_{j+1,~i,~j+1}-\tau_{i+1,~j,~i+1}\right)\neq 0$$ 
and  $$\left(\tau_{j+1,~i,~j+1}\cdot \tau_{i+1,~j,~i+1}-\tau_{i+1,~j,~i+1}\cdot \tau_{j+1,~i,~j+1}\right)= 0\Leftrightarrow s_{j+1,~i,~j+1}\cdot s_{i+1,~j,~i+1}=s_{i+1,~j,~i+1}\cdot s_{j+1,~i,~j+1}$$  .
which holds if and only if $r=2$ and $\frac{\prod_{k=1}^{n}f_k}{f_i f_j}=-1$, as it has been already proved in the theorem.

Since, $f_i, f_j$ is either $+1$ or $-1$, ~$\kappa_{i,j}\in\{-2, ~0, ~2\}$ by using Equations \eqref{asec1-cycle} and \eqref{asoc1-cycle}, Equation \eqref{s-i-r-cycle} holds if and only if 

\begin{equation}\label{dickson-cycle}
   \mathbb{E}_{r-1}{\left(\kappa_{i,j}~, ~f_{i} f_{j}\right)}=0. 
\end{equation}

where, Equation \eqref{dickson-cycle} is satisfied if and only if 
the following conditions hold:
\begin{itemize}
    \item $r=2m$, where $m\in \mathbb{N}$;
    \item $\kappa_{i,j}=f_{i+1}\cdot f_{i+2}\cdots f_{j-1}+f_{i-1}\cdot f_{i-2}\cdots f_{j+1}=0$, which is equivalent to $\frac{\prod_{k=1}^n f_k}{f_i\cdot f_j} =-1$.
\end{itemize}

Hence,

the defining relation is 

\begin{align} 
(s_{j+1, i, j+1}\cdot s_{i+1, j, i+1})^2=1 \ \Leftrightarrow \ \frac{\prod_{k=1}^n f_k}{f_i\cdot f_j} =-1
.\end{align}

\end{proof}

\begin{remark}\label{only-relation-cycle}
 Notice, in the proof of Theorem \ref{mixed-sign-s-laced-cycle}, apart from the usual Coxeter relations $\left(s_{i}s_{j}\right)^r=1$, where $r\in\{2,3\}$ in case of a simply-laced mixed-sign Coxeter group and  $1\leq~i, j\leq n$ (notice, in case of $r=3$ the signs of the vertices $s_i$ and $s_j$ satisfy $f_i=f_j$),  we consider only relations of the form  $\left(s_{i}s_{i+1,~j,~i+1}\right)^{r_1}=1$ for $j\neq i-1$ (which is induced by the subgraph with the set of  vertices  $\{s_i, s_{i+1}, \ldots, s_j\}$) and relations of the form  $\left([s_{j+1, i, j+1}][s_{i+1,~j,~i+1}]\right)^{r_2}=1$ (where the relation contains all the generators $s_k$ for $1\leq k\leq n$ of $W(\Gamma)$). Notice,  in the case of mixed-sign Coxeter group with an associated mixed-sign Coxeter graph which is a simply-laced simple cycle, $r_1$ equals to $3$ and  $r_2$ equals to $2$.  By a similar way as it has shown that every relation in a Coxeter group is derived from a relation of a form $\left(s_{i}s_{j}\right)^r=1$ for some $r$ \cite{BOOK}, it can be shown that any other type of relation (which can be found by the same method of the proof of Theorems \ref{line-3-relation} and \ref{mixed-sign-s-laced-cycle}) is derived from the mentioned relations in Theorem \ref{mixed-sign-s-laced-cycle}.
\end{remark}

\bigskip

\begin{example}

 The presentation of the mixed-sign Coxeter group which associate mixed-sign Coxeter graph is the following square:
\begin{center}
	\includegraphics[width=.26\textwidth]{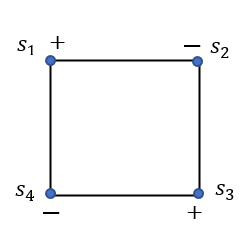}
\end{center}
is:

\begin{align*}
&\langle s_{1}, s_{2},s_{3}, s_{4}  |
s_{1}^{2}=s_{2}^{2}=s_{3}^{2}=s_{4}^{2}=1, \ (s_{1} s_{3})^2=(s_2s_4)^2=1, \\
&(s_{1} s_{2} s_{3} s_{2})^{3}=
(s_{2} s_{3} s_{4} s_{3})^{3}=
(s_{3} s_{4} s_{1} s_{4})^{3}=
(s_{4} s_{1} s_{2} s_{1})^{3}=1, \\
&(s_{1} s_{2} s_{3} s_{4} s_{3} s_{2})^{2}=
(s_{2} s_{3} s_{4} s_{1} s_{4} s_{3})^{2}=
(s_{3} s_{4} s_{1} s_{2} s_{1} s_{4})^{2}= \\
&(s_{4} s_{1} s_{2} s_{3} s_{2} s_{1})^{2}=1 \rangle.
\end{align*}
\end{example}


\subsubsection{Classification of some mixed-sign simply laced simple cycles}

Now, we give a classification of mixed-sign Coxeter groups, where the associated mixed-sign Coxeter graph is a simply-laced simple cycle for some special cases. We start with a full classification of the cases where the associated simple cycle has $3$ vertices, then we turn to the cases of simple cycles, where all the vertices of the cycle are signed by the same sign. Finally, we prove that the mixed-sign Coxeter group which we get where the associated mixed-sign Coxeter graph, which is a simple cycle, has an odd length, and the vertices of the graph are signed by $-1$, is the Coxeter $D_n$ (It has been mentioned in \cite{Japan Thesis} without a proof, since it has been shown there by a GAP check only). We start with a useful lemma, which we use in the proofs.

\begin{lemma}\label{affine-conjugate}
Let $\Gamma$ be a mixed-sign Coxeter graph which is a simply-laced simple cycle of length $n$, with the vertices $s_1, s_2, \ldots, s_n$, which are signed by $f_1, f_2, \ldots, f_n$ respectively, such that $f_{i-1}=f_i=f_{i+1}$, for some $1\leq i\leq n$ (By Remark \ref{n+1=1}, ~$n+1:=1$ and $1-1:=n$). Then  the following element of $W(\Gamma)$: $$s_i(s_{i+1}s_{i+2}\cdots s_{i-2}s_{i-1}s_{i-2}\cdots s_{i+2}s_{i+1})$$
is conjugate to 
$$s_{i+1}(s_{i+2} s_{i+3}\cdots s_{i-2} s_{i-1}  s_i s_{i-1} s_{i-2}\cdots s_{i+3} s_{i+2}).$$

\medskip

In particular, if $f_i$ is the same for all $1\leq i\leq n$ (i.e., either $f_i=-1$ or $f_i=+1$ for all $1\leq i\leq n$), then all the elements of the form $$s_i(s_{i+1}s_{i+2}\cdots s_{i-2}s_{i-1}s_{i-2}\cdots s_{i+2}s_{i+1})$$ are conjugate to
$$s_1(s_2 s_3\cdots s_{n-1}s_n s_{n-1}\cdots s_3 s_2).$$ 

\end{lemma}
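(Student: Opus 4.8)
The plan is to prove the following slightly sharper statement, from which the Lemma follows at once: writing, for each index $k$ (with the cyclic convention of Remark \ref{n+1=1}),
\[
C_k \;:=\; s_k\,(s_{k+1}s_{k+2}\cdots s_{k-2})\,s_{k-1}\,(s_{k-2}\cdots s_{k+2}s_{k+1}),
\]
if $f_{k-1}=f_k=f_{k+1}$ then $C_{k+1}=(s_k s_{k+1})\,C_k\,(s_k s_{k+1})^{-1}$. The element displayed first in the Lemma is exactly $C_i$ and the second is exactly $C_{i+1}$, so this gives the first assertion; and if all the $f_i$ coincide, the hypothesis holds at every vertex, so $C_1, C_2, \ldots, C_n$ are all conjugate by transitivity of conjugacy, which is the ``in particular'' clause (note $C_1 = s_1(s_2 s_3\cdots s_{n-1}s_n s_{n-1}\cdots s_3 s_2)$).

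To prove the conjugacy I would first record the two ingredients, both immediate from Proposition \ref{2-+}: (a) $s_i$ commutes with every generator $s_k$ with $k\in\{i+2,\dots,i-2\}$, since in a simple cycle $s_i$ is adjacent only to $s_{i-1}$ and $s_{i+1}$; and (b) the braid relations $s_{i+1}s_i s_{i+1}=s_i s_{i+1}s_i$ and $s_i s_{i-1}s_i=s_{i-1}s_i s_{i-1}$, which hold precisely because $f_i=f_{i+1}$ and $f_{i-1}=f_i$ respectively (these are the instances of relation \eqref{4cox}). Then introduce the arc word $u:=s_{i+2}s_{i+3}\cdots s_{i-2}$ strictly between $s_{i+1}$ and $s_{i-1}$ — empty when $n=3$, a single letter when $n=4$, and everything below is uniform in $n$ — and observe $C_i = s_i s_{i+1}\,u\,s_{i-1}\,u^{-1}s_{i+1}$ while $C_{i+1}= s_{i+1}\,(u s_{i-1})\,s_i\,(u s_{i-1})^{-1}$.

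The computation is then two conjugations. First, conjugating by $s_{i+1}$ — which cancels the $s_{i+1}$ at the right-hand end of $C_i$ — and then applying (b) and (a):
\[
s_{i+1}C_i s_{i+1}
= (s_{i+1}s_i s_{i+1})\,u\,s_{i-1}\,u^{-1}
= s_i s_{i+1}\,(s_i u)\,s_{i-1}\,u^{-1}
= s_i s_{i+1}\,u\,s_i s_{i-1}\,u^{-1}.
\]
Conjugating once more by $s_i$ — which cancels the leading $s_i$ — commuting $s_i$ past $u^{-1}$ by (a), and applying (b) to $s_i s_{i-1}s_i$:
\[
s_i\bigl(s_{i+1}C_i s_{i+1}\bigr)s_i
= s_{i+1}\,u\,s_i s_{i-1}\,u^{-1}s_i
= s_{i+1}\,u\,(s_i s_{i-1}s_i)\,u^{-1}
= s_{i+1}\,(u s_{i-1})\,s_i\,(u s_{i-1})^{-1}
= C_{i+1}.
\]
Hence $C_{i+1} = (s_i s_{i+1})\,C_i\,(s_i s_{i+1})^{-1}$, as claimed.

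I do not expect a genuine obstacle here: the argument is a short explicit word manipulation, and the sign hypothesis enters only through the two braid relations, so $f_{i-1}=f_i=f_{i+1}$ is exactly what is needed and nothing more. The points that deserve a little care are the cyclic index bookkeeping, the (immediate) verification via Proposition \ref{2-+} that $s_i$ commutes with every letter of $u$, and confirming that the small cases $n=3,4$, where $u$ degenerates to the empty word or a single letter, are genuinely covered by the same two lines.
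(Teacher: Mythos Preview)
Your proposal is correct and is essentially the same argument as the paper's own proof: both conjugate $C_i$ by $s_i s_{i+1}$, use the braid relations at the edges $\{i-1,i\}$ and $\{i,i+1\}$ (available precisely because $f_{i-1}=f_i=f_{i+1}$), and slide $s_i$ past the intermediate arc word via the commuting relations. Your introduction of $u$ and the two-step presentation (first $s_{i+1}$, then $s_i$) makes the computation slightly easier to read, but the route and the ingredients are identical.
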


\begin{proof}
Consider $W(\Gamma)$ with the generators $s_1, s_2, \ldots s_n$, such that $f_{i-1}=f_i=f_{i+1}$, for some  $1\leq i\leq n$. Then the following relations holds in $W(\Gamma)$:
\begin{itemize}
    \item $s_i^2=1$;
    \item $s_i s_{i+1} s_i = s_{i+1} s_i s_{i+1}$ where we consider by Remark \ref{n+1=1}, $n+1:=1$;
    \item $s_i s_j = s_j s_i$ for $|i-j|\neq 1$.
\end{itemize}
Therefore,
\begin{align*}
& (s_i s_{i+1}) [s_i(s_{i+1}s_{i+2}\cdots s_{i-2}s_{i-1}s_{i-2}\cdots s_{i+2}s_{i+1})] (s_{i+1} s_i) \\
& =s_i s_i s_{i+1} s_i s_{i+2} s_{i+3}\cdots s_{i-2}s_{i-1}s_{i-2}\cdots s_{i+2} s_i \\
 & =s_{i+1} ( s_i s_{i+2} s_{i+3}\cdots s_{i-2}s_{i-1}s_{i-2}\cdots s_{i+3} s_{i+2} s_i ) \\
 &  =s_{i+1} ( s_{i+2} s_{i+3}\cdots s_{i-2} s_i s_{i-1} s_i s_{i-2}\cdots s_{i+3} s_{i+2}) \\
 & =s_{i+1} ( s_{i+2} s_{i+3}\cdots s_{i-2} s_{i-1}  s_i s_{i-1} s_{i-2}\cdots s_{i+3} s_{i+2}).
\end{align*}
\end{proof}

\textbf{The case: $n=3$}
\\

Now, we classify the mixed-sign Coxeter groups which mixed-sign Coxeter graph is a simply-laced simple cycle of length $3$. We show a strong connection to mixed-sign Coxeter groups which associated mixed-sign Coxeter graph is a line with 3 vertices.

\begin{enumerate}
    \item Consider the mixed-sign Coxeter graph (which is a Coxeter graph, since all the labels are +1) at the right hand side:
     \begin{center}
\includegraphics[width=.70\textwidth]{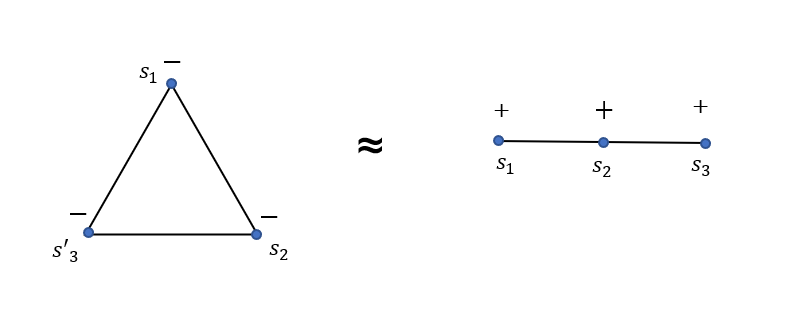}
\end{center}
    The associated Coxeter group, which is the symmetric group $S_4$, has the following presentation:
    $$\langle s_{1}, s_{2},s_{3} \ | \ s_{1}^{2}=s_{2}^{2}=s_{3}^{2}=1, \
(s_{1} s_{3})^2=1, \
(s_{1} s_{2})^3=(s_2s_3)^3=1  \rangle.$$ 

Define $s'_3:=s_2s_3s_2.$ Then the presentation of the same  Coxeter group by using generators $s_1, s_2, s'_3$ is as follow:
$$\langle s_{1}, s_{2},s'_{3} \ | \
s_{1}^{2}=s_{2}^{2}={s'}_{3}^{2}=1, 
\ (s_{1}s_{2}s'_{3}s_{2})^2=(s_{1} s_{2})^3=(s_2s'_3)^3=(s_1s'_3)^3=1 \rangle.$$
Where, the presentation is same to the presentation of the mixed-sign Coxeter group associated to the mixed-sign Coxeter graph which is a simple cycle at the left hand side.

    
    Notice,  the three elements $s_2s'_3s_1s'_3$, $s_1 s_2 s'_3 s_2$, and $s'_3 s_1 s_2 s_1$ are conjugate elements in $W(\Gamma)$ by Lemma \ref{affine-conjugate}, therefore $(s_1 s_2 s'_3 s_2)^2=1$ implies $(s_2s'_3s_1s'_3)^2=1$  and $(s'_3 s_1 s_2 s_1)^2=1$ as well.
  
 \item Consider the mixed-sign Coxeter graph  at the right hand side: 
 \begin{center}
\includegraphics[width=.70\textwidth]{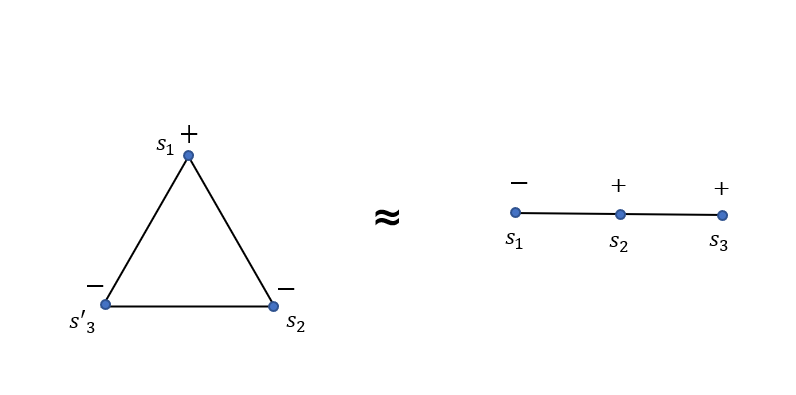}
\end{center}
The associated mixed-sign Coxeter group has the following presentation:
    $$\langle s_{1}, s_{2},s_{3} \ | \ s_{1}^{2}=s_{2}^{2}=s_{3}^{2}=1, \
({s}_{1} s_{3})^2=1, \
(s_2s_3)^3=1  \rangle.$$ 
Define $s'_3:=s_2s_3s_2.$ Then the presentation of the same mixed-sign Coxeter group by using generators $s_1, s_2, s'_3$ is as follow:
$$\langle s_{1}, s_{2},{s'}_{3} \ | \
s_{1}^{2}=s_{2}^{2}={s'}_{3}^{2}=1, 
\ (s_{1}s_{2}{s'}_{3}s_{2})^2=(s_2 s'_3)^3=1 \rangle.$$
Where, the presentation is same to the presentation of the mixed-sign Coxeter group associated to the mixed-sign Coxeter graph which is a simple cycle at the left hand side.


    \item Consider the mixed-sign Coxeter graph at the right hand side:
     \begin{center}
\includegraphics[width=.68\textwidth]{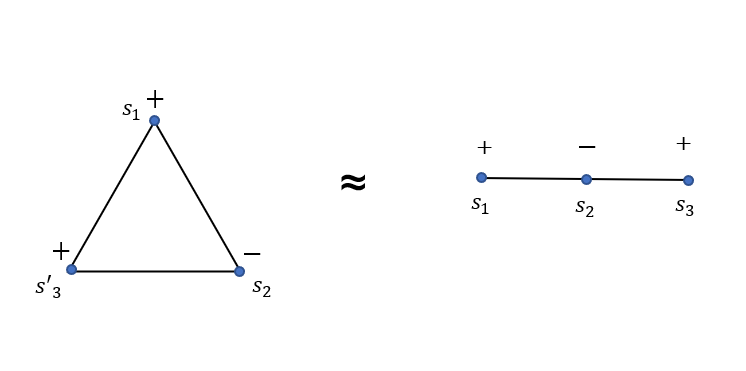}
\end{center}
The associated mixed-sign Coxeter group has the following presentation:
   $$\langle s_{1}, s_{2},s_{3} \ | \ s_{1}^{2}=s_{2}^{2}=s_{3}^{2}=1, \
(s_{1} s_{3})^2=1, \
(s_1s_2s_3s_2)^3=1  \rangle.$$ 
Define $s'_3:=s_2s_3s_2.$ Then the presentation of the same mixed-sign Coxeter group by using generators $s_1, s_2, s'_3$ is as follow:
$$\langle s_{1}, s_{2},s'_{3} \ | \
s_{1}^{2}=s_{2}^{2}={s'}_{3}^{2}=1, 
\ (s_{1}s_{2}s'_{3}s_{2})^2=1, \
(s_1s'_3)^3=1 \rangle.$$
Where, the presentation is same to the presentation of the mixed-sign Coxeter group associated to the mixed-sign Coxeter graph which is a simple cycle at the left hand side.

The mixed-sign Coxeter groups presentations which corresponds to the above mixed-sign Coxeter graphs are \textbf{not} Coxter groups since there are relations in the groups which involves more then two generators.

\end{enumerate}







\textbf{Simply-laced simple cycle - all the vertices are signed by the same sign}
\\

Now, we give a classifications of the mixed-sign Coxeter groups, where the associated mixed-sign Coxeter graph is a simply-laced simple cycle, where all the vertices are signed by the same sign (either +1 or -1). 

\begin{proposition}\label{Classification-Cyclic graph}

Let $\Gamma$ be a mixed-sign Coxeter graph, such that $\Gamma$ is a simple cycle of length~$n$ (i.e., simple cycle with $n$ vertices and $n$ edges).
\begin{itemize}

    \item  In case all the vertices of $\Gamma$ are signed by +1, $$W(\Gamma)\approx \widetilde{S}_{n} .$$
    
    
    \item  In case all the vertices of $\Gamma$ are signed by -1, we have the following subcases:
\begin{enumerate}
    \item In case $\Gamma$ is an even length cycle (i.e., $n$ is even), then
    $$W(\Gamma)\approx\widetilde{S}_{n}.$$
    \item In case  $\Gamma$ is an odd length cycle (i.e., $n$ is odd), then
    $$W(\Gamma)\approx D_n.$$
\end{enumerate}
\end{itemize}

\end{proposition}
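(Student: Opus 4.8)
The plan is to dispose of the two bipartite-reducible cases at once and then focus on the odd, all-negative case. If every vertex of $\Gamma$ carries the label $+1$, the mixed-sign bilinear form is exactly the Tits form of the Coxeter system whose graph is the simply-laced $n$-cycle, so $W(\Gamma)$ is the Coxeter group of type $\widetilde A_{n-1}$; this is the affine symmetric group $\widetilde S_n$ by the standard realisation of type-$A$ affine Weyl groups. If every vertex carries the label $-1$ and $n$ is even, then $\Gamma$ is bipartite, so Remark~\ref{MC+=MC-} gives $W(\Gamma)\cong W(-\Gamma)$, and $-\Gamma$ is the $n$-cycle with all labels $+1$, so we are back in the first case.

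\emph{Reductions in the odd, all-negative case.} I would first record the presentation of $W(\Gamma)$ coming from Theorem~\ref{mixed-sign-s-laced-cycle} and Remark~\ref{only-relation-cycle}: since all $f_k=-1$ one has $f_i=f_j$ for every $i,j$ and $\prod_k f_k=(-1)^n=-1$, so the complete list of relations is $s_i^2=1$; $(s_is_j)^2=1$ for $2\le|i-j|\le n-2$; $(s_is_{i+1})^3=1$; $(s_i\,s_{i+1,j,i+1})^3=1$ whenever $j\neq i-1$; and $(s_{j+1,i,j+1}\,s_{i+1,j,i+1})^2=1$ for all $i\neq j$. Two consequences set up the argument. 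First, the pairwise orders $m_{i,i+1}=3$ and $m_{i,j}=2$ otherwise show $W(\Gamma)$ is a quotient of $\widetilde S_n$, and the circulant matrix $B$ has eigenvalues $-2-2\cos(2\pi k/n)$, all strictly negative for odd $n$, so $-B$ is a genuine Euclidean form and $W(\Gamma)$ is in particular finite. Second, $\langle s_1,\dots,s_{n-1}\rangle$ is a quotient of $S_n$ in which $s_is_{i+1}$ has order exactly $3$, hence is isomorphic to $S_n$, with $s_i\leftrightarrow(i,i+1)$.

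\emph{Surjections in both directions.} Realise $D_n$ via its reflection action on $\mathbb R^n$ with root system $\{\pm e_i\pm e_j\}$, and take $\rho_i:=r_{e_i-e_{i+1}}$ for $1\le i\le n-1$ together with $\rho_n:=r_{e_n+e_1}$. Computing root inner products shows that these $n$ reflections generate $D_n$ and satisfy all the defining relations of $W(\Gamma)$ above: consecutive $\rho$'s have order-$3$ product and non-consecutive ones commute, each $\rho_{i+1,j,i+1}$ is again the reflection in a root $e_a\pm e_b$, so relations of type \eqref{4l} become the $S_n$-identity ``two transpositions through a common point have product of order $3$'' and relations of type \eqref{4} become a single orthogonality check of an $e_a-e_b$ root against an $e_c+e_d$ root. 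This yields a surjection $\phi\colon W(\Gamma)\twoheadrightarrow D_n$. Conversely, let $\sigma\in\langle s_1,\dots,s_{n-1}\rangle\cong S_n$ correspond to the transposition $(1,n-1)$ and set $u_n:=\sigma s_n\sigma^{-1}$; using that $s_n$ commutes with $s_2,\dots,s_{n-2}$ and the particular instances $(s_n\,s_{1,n-3,1})^3=1$ of \eqref{4l} and $(s_n\,s_{1,n-1,1})^2=1$ of \eqref{4}, one checks that $s_1,\dots,s_{n-1},u_n$ satisfy the standard Coxeter relations of $D_n$ (the path $s_1-\cdots-s_{n-1}$ with branch node $u_n$ attached at $s_{n-2}$) and generate $W(\Gamma)$, giving a surjection $\psi\colon D_n\twoheadrightarrow W(\Gamma)$. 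Since $D_n$ is finite, hence Hopfian, the surjective composite $\phi\circ\psi\colon D_n\to D_n$ is an isomorphism, forcing $\psi$ to be injective, so $W(\Gamma)\cong D_n$. (For $n=3$, where $D_3\cong S_4$, this agrees with the explicit $n=3$ classification given above.)

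The step I expect to fight hardest with is the verification in the previous paragraph: faithfully translating the ``long'' generalized Coxeter relations \eqref{4l} and \eqref{4}, whose words $s_{i+1,j,i+1}$ run along arcs of the cycle and sometimes pass through $s_n$, into reflections of $D_n$, and conversely picking out exactly which instances of \eqref{4l} and \eqref{4} are needed to verify $(u_n s_{n-2})^3=1$ and $(u_n s_j)^2=1$ in $W(\Gamma)$. Each individual check reduces to an elementary $S_n$-identity plus a short root-inner-product computation, but getting the cyclic index arithmetic right at the ``seam'' where $s_{n-1}$, $s_n$ and $s_1$ meet is where the care is required.
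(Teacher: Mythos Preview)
Your handling of the two bipartite-reducible cases matches the paper exactly. For the odd all-$-1$ case your argument is correct but organised rather differently from the paper's.

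The paper does not build two surjections and invoke Hopfianness. Instead it performs a single explicit Tietze move: it first uses Lemma~\ref{affine-conjugate} to show that all the relations \eqref{4} are conjugate to the single relation $\bigl(s_1\,[s_2s_3\cdots s_{n-1}s_ns_{n-1}\cdots s_3s_2]\bigr)^2=1$, then replaces the generator $s_n$ by $s'_0:=s_2s_3\cdots s_{n-1}s_ns_{n-1}\cdots s_3s_2$, and checks directly---using only the $S_n$-relations among $s_2,\dots,s_n$---that $(s_1s'_0)^2=1$, $(s'_0s_2)^3=1$ and $(s'_0s_k)^2=1$ for $3\le k\le n-1$. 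Thus the new generating set $s_1,\dots,s_{n-1},s'_0$ satisfies exactly the $D_n$ Coxeter relations, with the fork at $s_2$. No eigenvalue computation, no explicit $D_n$ root model, and no finiteness argument is used.

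What each approach buys: the paper's change-of-generators is more elementary and entirely internal to the presentation, but to be a complete Tietze argument it must also verify that the old cycle relations (e.g.\ $(s_{n-1}s_n)^3=(s_ns_1)^3=1$ and the relations \eqref{4l}) are consequences of the $D_n$ relations in the new generators---something the paper carries out only in the worked $n=5$ example. Your two-surjection/Hopfian route sidesteps that bookkeeping cleanly, at the price of importing the $D_n$ root system and the definiteness/finiteness argument (which, incidentally, is justified because the $\pi_{s_i}$ have integer entries, so $W(\Gamma)\subseteq GL_n(\mathbb Z)\cap O(-B)$ is discrete in a compact group). The ``hard step'' you flag---checking that a chosen conjugate of $s_n$ together with $s_1,\dots,s_{n-1}$ satisfies the $D_n$ relations---is exactly the computation the paper carries out for its particular conjugate $s'_0$, and you may find it simpler to adopt that specific choice rather than $u_n=\sigma s_n\sigma^{-1}$ with $\sigma=(1\,n{-}1)$.
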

\begin{proof}
The case where all the vertices of the mixed-sign Coxeter graph is signed by $+1$, the associated mixed-sign Coxeter group is a Coxeter group. Hence by the classification of Coxeter groups \cite{BOOK} $W(\Gamma)\approx\widetilde{S}_{n}$. 
Hence, assume all the vertices of the mixed-sign Coxeter graph is signed by $-1$. 

Assume case 1. Then,  $\Gamma$ is an even length cycle, and therefore, $\Gamma$ is a bipartite graph. Then by Remark \ref{MC+=MC-}, $ W(\Gamma)\approx W(-\Gamma)$. Hence we have:
    $$W(\Gamma)\approx W(-\Gamma)\approx \widetilde{S}_{n}.$$
Assume case 2.
Let $\Gamma=C_{2n+1}^{-}$ be a mixed-sign Coxeter graph which is a simply-laced simple cycle with the vertices $s_1, s_2, \ldots, s_{2n+1}$ (i.e., the number of vertices is odd), and $f_i=-1$ for all $1\leq i\leq 2n+1$. Then, by Theorem  \ref{mixed-sign-s-laced-cycle}, the following relations holds in addition to the standard Coxeter relations 
\begin{equation}\label{cicle-i-2n+1-relation}
[s_i(s_{i+1} s_{i+2} \cdots s_{i-2} s_{i-1} s_{i-2}\cdots  s_{i+1})]^2=1,
\end{equation}
for $1\leq i\leq 2n+1$, (where by Remark \ref{n+1=1},  $(2n+1) + 1 := 1$).
By Lemma \ref{affine-conjugate}, all the element of the form 
$$s_i(s_{i+1} s_{i+2} \cdots s_{i-2} s_{i-1} s_{i-2}\cdots  s_{i+1} s_i) $$
are conjugate elements in $W(C_{2n+1}^{-})$. Hence, all the relations in Equation \eqref{cicle-i-2n+1-relation} can be deduced from the following relation.
\begin{equation}\label{cicle-2n+1-relation}
[s_1(s_2 s_3 \cdots s_{2n} s_{2n+1} s_{2n}\cdots  s_3 s_2)]^2=1.
\end{equation}
Define $s'_{0}$ to be $(s_2 s_3 \cdots s_{2n} s_{2n+1} s_{2n}\cdots  s_3 s_2)$.
Notice, the elements $s_1, s_2, \ldots s_{2n}, s'_{0}$ generates the group $W(C_{2n+1}^{-})$.
Now, consider the presentation of $W(C_{2n+1}^{-})$ presented by the generators $s_1, s_2, \ldots s_{2n}, s'_{0}$. Then, the following relations holds:
By the definition of $s'_{0}$ and Equation \eqref{cicle-2n+1-relation}, we have:  $$(s_1s'_{0})^2=1.$$

Now, consider the relations  of $s'_{0}$ with $s_i$ for $2\leq i\leq 2n$.
By using the following properties:
 The mixed-sign Coxeter group associated to the subgraph with the vertices $s_2, s_3, \ldots, s_{2n+1}$ is the Coxeter group $S_{2n+1}$, with the standard relations 
    \begin{itemize}
        \item $(s_i s_{i+1})^3=1$;
        \item $(s_i s_j)^2=1$ for $|i-j|>1$.
    \end{itemize}
     
Therefore, for every $2\leq i<j\leq 2n$, the following holds: $$s_i s_{i+1}\cdots s_{j-1} s_{j} s_{j-1}\cdots s_{i+1} s_{i}  = s_{j} s_{j-1} \cdots s_{i+1} s_{i} s_{i+1}\cdots s_{j-1} s_{j}.$$

Hence,

\begin{align*}
\left(s'_{0} s_2\right)^3 &= \left([s_2 s_3 \cdots s_{2n} s_{2n+1} s_{2n}\cdots  s_3 s_2]s_2\right)^3= \left([s_3 \cdots s_{2n} s_{2n+1} s_{2n}\cdots  s_3]s_2\right)^3 \\ 
&= \left([s_{2n+1} s_{2n} \cdots s_4 s_3 s_4\cdots s_{2n} s_{2n+1}]s_2\right)^3=\left(s_3 s_2\right)^3=1.
\end{align*}

Now, for $3\leq k\leq 2n$:
\begin{align*}
  \left(s'_{0} s_k\right)^2 &= \left([s_2 s_3 \cdots s_{2n} s_{2n+1} s_{2n}\cdots  s_3 s_2]s_k\right)^2 = \left([s_{k-1} s_{k} s_{k+1} s_{k} s_{k-1}] s_k\right)^2 \\
  &= \left([ s_{k} s_{k+1} s_{k} ] [s_{k-1} s_k s_{k-1}]\right)^2 = \left([ s_{k} s_{k+1} s_{k} ] [s_{k} s_{k-1} s_{k}]\right)^2 =\left( s_{k+1} s_{k-1} \right)^2=1
\end{align*}

\end{proof}

\begin{example}
Consider the graph

\begin{center}
	\includegraphics[width=.28\textwidth]{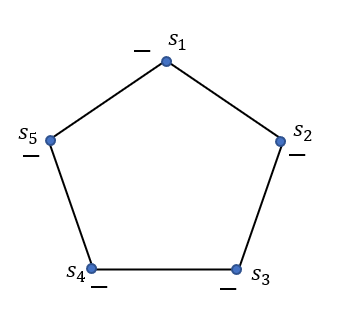}
\end{center}

Where $n=5$ and $f_i=-1 \ \forall i$, then the relations are:

\begin{equation}\label{C5-}
\begin{aligned}
&W(C_{5}^{-})=\langle s_1, s_2, s_3, s_4, s_5~|~{s_1}^2={s_2}^2={s_3}^2={s_4}^2={s_5}^2=1, \\ &(s_1s_3)^2=(s_1s_4)^2=(s_2s_4)^2=(s_2s_5)^2=(s_3s_5)^2=1, \\
&(s_1s_2)^3=(s_2s_3)^3=(s_3s_4)^3=(s_4s_5)^3=(s_1s_5)^3=1, \\ &(s_1[s_2s_3s_4s_5s_4s_3s_2])^2=1. \rangle
\end{aligned}
\end{equation}

Define $s'_{0}$ to be $s_2s_3s_4s_5s_4s_3s_2$, and consider the presentation of 
$W(C_{5}^{-})$ by the generators $s_1, s_2, s_3, s_4, s'_{0}$.
Then the relations in Equation \eqref{C5-} become to be as follow:
\begin{itemize}
    \item ${s_5}^2=1$ becomes to be ${s'_{0}}^2=1$;
    \item $(s_2s_5)^2=(s_3s_5)^2=(s_4s_5)^3=1$ become to be 
    $(s_3s'_{0})^2=(s_4s'_{0})^2=(s_2s'_{0})^3=1$;
    \item $(s_1[s_2s_3s_4s_5s_4s_3s_2])^2=1$ becomes to be $(s_1s'_{0})^2=1$;
    \item The relation $(s_1s_5)^3$ is derived by the rest of the relations of $W(C_{5}^{-})$ as follow:
    The definition $s'_{0}:=s_2s_3s_4s_5s_4s_3s_2$ implies 
    $s_5=s_4s_3s_2s'_{0}s_2s_3s_4$.
    Then, by using $s_1s_4=s_4s_1$, ~$s_1s_3=s_3s_1$,  ~$s_2s'_{0}s_2=s'_{0}s_2s'_{0}$,  ~$s'_{0}s_1=s_1s'_{0}$, and $(s_1s_2)^3=1$, the following holds:
    $$\begin{aligned}
        &(s_1s_5)^3=(s_1[s_4s_3s_2s'_{0}s_2s_3s_4])^3\\&=(s_1[s_3s_2s'_{0}s_2s_3])^3= (s_1[s_2s'_{0}s_2])^3\\&=(s_1[s'_{0}s_2s'_{0}])^3=(s_1s_2)^3=1.
    \end{aligned}$$
\end{itemize}

Hence, the presentation of $W(C_{5}^{-})$ by using the set of generators $\{s_1, s_2, s_3, s_4, s'_{0}\}$ instead of the set $\{s_1, s_2, s_3, s_4, s_5\}$ is a Coxeter group presentation, where the associated Coxeter graph is as follow:

\begin{center}
	\includegraphics[width=.43\textwidth]{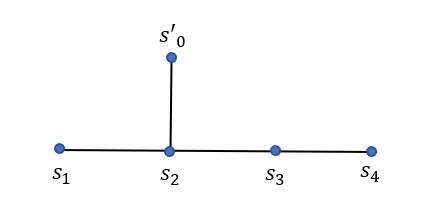}
\end{center}

\end{example}
\section{Conclusions and future plans}\label{future plans}

 We gave a  classification of the quotients of Coxeter groups in terms of generators and relations, which we get as a simply-laced mixed-sign Coxeter group \cite{HIR11, Japan Thesis}, for a Coxeter graph which is  either a line or a simple cycle, with a condition of non-singularity of the matrix of the associated bilinear form. It is interesting to generalize the results of the paper for further simply-laced Coxeter graphs (e.g. general trees, or non-simple cycles), and for non-simply-laced mixed-sign Coxeter graphs.


\begin{thebibliography}{10}

\bibitem{alv} M. Amram, R. Lawrence, U. Vishne, "Artin Covers of Braid Groups" \emph{Journal of Knot Theory Ramifications} 21(7) (2012)

\bibitem{ast1} M. Amram, R. Shwartz, M. Teicher, "Coxeter Covers of the Classical Coxeter groups", \emph{Int. Journal of Algebra and Computarion} 20 (2010) 1041-1062.

\bibitem{ast2} M. Amram, R. Shwartz, M. Teicher, "Covers of D-type Artin Groups", \emph{The Electronic Journal of Combinatorics} 24(4) (2017), Research paper 17.

\bibitem{Japan Thesis}
J. K. Armstrong, 
"Principal Elements Of Mixed-Sign Coxeter Systems", Electronic Theses, Treatises and Dissertations. Paper 4697, (2012).

\bibitem{BM} M. Barot, R. J. Marsh, "Reflection Group Presentation arising from Cluster Algebra", \emph{Trans. Amer. Math. Soc.} 367 (2015) 1945-1967.

\bibitem{BOOK}
A. Bj\"orner, F. Brenti, 
"Combinatorics of Coxeter Groups”, GTM, vol. 231, Springer (2004).

\bibitem{NOT}
V. Bugaenko, Y. Cherniavsky, T. Nagnibeda, and R. Shwartz,
"Weighted Coxeter graphs and generalized geometric 
representations of Coxeter groups", \emph{Discrete Applied 
Mathematics} Vol.192, pages 17-27 (2015).
\\
\bibitem{CST} P. J. Cameron, J. J. Seidel, S. V. Tsaranov, "Signed Graphs, Lattices and Coxeter Groups", \emph{Journal of Algebra} 164 (1994) 173-209.
\\
\bibitem{CGLS} Y. Cherniavsky, A. Goldstein, V. E. Levit, R. Shwartz, "Enumeration of balanced finite group valued functions on directed graphs", \emph{Information Processing Letters} \textbf{116} (2016) 484-488.
\\
\bibitem{line-graph} Y. Cherniavsky, R. Shwartz,  “Quotients of Coxeter groups associated to signed line graphs”, \emph{Advances and Applications in Discrete Mathematics} 25 (2), 213-261 (2020).
\\
\bibitem{D} L. E. Dickson, "The analytic representation of substitutions on a power of a prime
number of letters with a discussion of the linear group I,II". \emph{Ann. of Math.} 11 (1/6) (1897) 65-120.
\\
\bibitem{HIR11}
E. Hironaka,
"Mapping classes associated to mixed-sign Coxeter graphs”, preprint, available at arXiv (2011).
\\
\bibitem{HS} J. Howie, R. Shwartz, "One-relator products induced from generalized triangle groups", \emph{Communications in Algebra} \textbf{32} (2004) 2505-2526.
\\
\bibitem{rtv} L. Rowen, M. Teicher, U. Vishne, "Coxeter Covers of the Symmetric Groups", \emph{Journal of Group Theory} 8 (2005) 139-169.
\\
\bibitem{S1} R. Shwartz, "On the Freiheitssatz in certain one relator free products 1", \emph{Int. Journal of Algebra and Computation} 11 (2001) 673-706.
\\
\bibitem{S2} R. Shwartz, "On the Freiheitssatz in certain one relator free products 3",  \emph{Proc. of Edinburgh Mathematical Society}  45 (2002) 693-700.
\end{thebibliography}
\end{document}